\newtheorem{theorem}{Theorem}[section]
\newtheorem{lemma}[theorem]{Lemma}
\newtheorem{proposition}[theorem]{Proposition}
\newtheorem{corollary}[theorem]{Corollary}
\theoremstyle{definition}
\newtheorem{definition}[theorem]{Definition}
\newtheorem{remark}[theorem]{Remark}
\theoremstyle{remark}
\let\phi=\varphi
\def\epsilon{\varepsilon}
\def\0{\mathbf{0}}
\def\dimh{\dim_{\mathcal{H}}}
\numberwithin{equation}{section}
\def\dimh{\dim_{\mathcal{H}}}
\let\epsilon=\varepsilon
\def\@maketitle{%
  \newpage
  \null
  \vskip 0em%
  \begin{center}%
  \let \footnote \thanks
    {\Large\bfseries \@title \par}%
    \vskip 0em%
    {\normalsize
      \lineskip 0em%
      \begin{tabular}[t]{c}%
        \@author
      \end{tabular}\par}%
    \vskip 0em%
    {\normalsize \@date}%
  \end{center}%
  \par
  \vskip 0em}
\begin{document}
\date{}
\title{\LARGE \sc On the structure of the dimension spectrum for continued fraction expansions}

\author[1]{Painos Chitanga%
\thanks{Email: \texttt{pc441@kent.ac.uk}, Painos Chitanga gratefully acknowledges the support of the EPSRC grant  EP/V520093/1}}
\affil[1]{School of Mathematics, Statistics \& Actuarial Science,
University of Kent,\newline Canterbury CT2 7NX, UK}

\author[1]{Bas Lemmens%
\thanks{Email: \texttt{B.Lemmens@kent.ac.uk}}}

\author[2]{Roger Nussbaum%
\thanks{Email: \texttt{nussbaum@math.rutgers.edu}}}
\affil[2]{Department of Mathematics, Rutgers University, Piscataway, NJ, 08854, USA}

\maketitle
\date{}

\begin{abstract}
We analyse the (Hausdorff) dimension spectrum of continued fractions expansions with coefficients restricted to infinite subsets of $ \mathbb{N}$. We prove that the set of powers $P_q=\{q^n\colon n\in \mathbb{N}\}$ has full dimension spectrum for each integer $q\geq 2$, answering a question by Chousionis, Leykekhman and Urba\'nski. On the other hand, we show that the dimension spectrum for   $P^*_q=\{q^n\colon n\in \mathbb{N}\}\cup\{1\}$ has many gaps and regions where it is nowhere dense. We also investigate the case where $A$ is generated by a monomial, $M_q=\{n^q\colon n\in\mathbb{N}\}$. For $M_q$ we prove that  the dimension spectrum is full for $q\in\{1,2,3,4,5\}$, and it has a gap for each $q\geq 6$. Furthermore we show  for $q\in\{6,7,8\}$ that the dimension spectrum of $M_q$ is the disjoint union of two nontrivial  closed intervals, and it is the disjoint union of three nontrivial closed intervals for $q \in\{9,10\}$. For $q\geq 11$ we show that the dimension spectrum of $M_q$ consists of finitely many disjoint  nontrivial closed  intervals. The results concerning $M_q$ extend existing results for $q=1$ and $q=2$. In our analysis we employ Perron-Frobenius (transfer) operators, and numerical tools developed by Falk and Nussbaum that give rigorous  estimates for the Hausdorff dimension for continued fractions expansions.
\end{abstract}

{\small {\bf Keywords:} Continued fractions, dimension spectrum, Hausdorff dimension,  Perron-Frobenius operators}

{\small {\bf Subject Classification:} Primary 37C30; Secondary 11J70, 11K55}


\section{Introduction}
In this paper we investigate for  infinite sets $A\subseteq \mathbb{N}$ the set of continued fraction expansions,  
\[
J_A=\{ x\in (0,1)\colon x =[a_1, a_2, a_3, \cdots] \mbox{ with $a_i \in A$ for all $i$}\},
\]
where 
\[
	[a_1, a_2, a_3,\cdots] = \cfrac{1}{a_1 + \cfrac{1}{a_2+  \frac{1}{a_3 + \cdots}}}\,.
	\] 
	  These sets have a fractal nature and their Hausdorff dimension, denoted $\dim_{\mathcal{H}}(J_A)$, has  been studied extensively, see for instance \cite{Bu1,Bu2,FN1,FN2,Go,He1,He2,JP,MU1,MU2,N1,NPVL}.  For the basic concepts regarding continued fractions and Hausdorff dimension we refer the reader to \cite{Fa,Hen}. 

We analyse the {\em (Hausdorff) dimension spectrum of  $A\subseteq \mathbb{N}$}, 
\[
\mathrm{DS}(A) = \{\dimh(J_B)\colon B\subseteq A\},
\] 
which was studied recently by Chousionis, Leykekhman and Urba\'nski in \cite{CLU1,CLU2} for different infinite subsets $A$ of $\mathbb{N}$, see also \cite{CLUW,DS,Ju}. The case where $A=\mathbb{N}$ was considered  earlier by Kesseb\"ohmer and Zhu \cite{KZ}, who showed that it  has  full dimension spectrum, i.e., $  \mathrm{DS}(\mathbb{N})=[0,1]$, which confirmed a conjecture by Hensley \cite{He3} and Mauldin and Urba\'nski \cite{MU2} known as the  Texan Conjecture, see also \cite{Je}.  The main motivation for the results presented here comes from  \cite{CLU2}  in which the dimension spectrum of the set of powers of  integers  $q\geq 2$, the set of squares, and the set of primes was analysed among other sets. 

In this paper we investigate the dimension spectrum for a variety of natural choices of $A$ including the set of powers of integers $q\geq 2$:  $P_q=\{q^n\colon n\in\mathbb{N}\}$ and $P_q^*= P_q\cup \{1\}$. In \cite[Theorem 1.4]{CLU2} the dimension spectrum of $P_q$ was considered, and for  $q\geq 2$ it was shown there that there exists an $s(q)>0$ such that 
\[
 [0,\min\{s(q), \dim_{\mathcal{H}}(J_{P_q})\}]\subseteq \mathrm{DS}(P_q).
 \]
 We prove that $P_q$ has full dimension spectrum for all $q\geq 2$, answering a question from \cite{CLU2}. In fact, we will establish the following more general result. 
 \begin{theorem}\label{thm1} If $A=\{a_1,a_2,\ldots\}\subset \mathbb{N}$ with $2\leq a_1<a_2<\ldots$ and $a_na_m\geq a_{n+m}$ for all $m\geq n\geq 1$ with  $m\geq 2$, then  
\begin{equation}\label{full} 
 [0,\dim_{\mathcal{H}}(J_{A})]= \mathrm{DS}(A).
 \end{equation} 
 \end{theorem}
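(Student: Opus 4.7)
My plan is to follow the general template used by Kessebhmer--Zhu \cite{KZ} for the Texan conjecture and refined in the Chousionis--Leykekhman--Urba\'nski papers, combining Mauldin--Urba\'nski thermodynamic formalism with a small--perturbation lemma, and then extracting the intermediate values through a maximality argument.

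Step 1 (framework and continuity from below). For each $B\subseteq A$, let $\mathcal{L}_{B,s}$ denote the Perron--Frobenius operator of the associated conformal IFS $\{\phi_b(x)=1/(b+x)\}_{b\in B}$, and let $P_B(s)=\log\lambda(\mathcal{L}_{B,s})$ be its pressure. By Mauldin--Urba\'nski theory, $\dim_{\mathcal{H}}(J_B)$ is the unique zero (or the infimum of non--positivity) of $P_B$, and one has the approximation identity $\dim_{\mathcal{H}}(J_B)=\sup\{\dim_{\mathcal{H}}(J_F):F\subseteq B\text{ finite}\}$. This yields monotonicity $B\subseteq B'\Rightarrow \dim_{\mathcal{H}}(J_B)\leq\dim_{\mathcal{H}}(J_{B'})$ and continuity from below: $B_n\nearrow B$ implies $\dim_{\mathcal{H}}(J_{B_n})\nearrow\dim_{\mathcal{H}}(J_B)$.

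Step 2 (small--perturbation lemma). The core technical step is to prove that for every $B\subseteq A$ and every $\varepsilon>0$ there exists $N$ such that
\[
\dim_{\mathcal{H}}(J_{B\cup\{a_n\}})-\dim_{\mathcal{H}}(J_B)<\varepsilon\qquad\text{for all }n\geq N\text{ with }a_n\notin B.
\]
Adding the single contraction $\phi_{a_n}$ perturbs $\mathcal{L}_{B,s}$ by an operator whose norm is controlled by $\|\phi_{a_n}'\|_\infty^s\lesssim a_n^{-2s}$; this forces $P_{B\cup\{a_n\}}(s_0)-P_B(s_0)\to 0$ at $s_0=\dim_{\mathcal{H}}(J_B)$, and the desired bound on the dimension shift then follows provided the slope $\partial_s P_B(s_0)$ is bounded away from zero uniformly in $B\subseteq A$. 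The hypothesis $a_na_m\geq a_{n+m}$ (equivalently, $\log a_n$ is subadditive, so $a_n\leq a_1^n$) enters precisely here: it ensures that $A$ is sufficiently dense compared with $P_{a_1}=\{a_1^n\}$ to give a uniform lower bound on this slope through comparison with the pressure of $P_{a_1}$, while still forcing $a_n\to\infty$ so that the tail contributions decay. Establishing this uniform slope estimate, via the transfer--operator machinery of Falk--Nussbaum and CLU cited in the abstract, is the technical heart of the argument.

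Step 3 (maximality argument and endpoints). Fix $t\in(0,\dim_{\mathcal{H}}(J_A))$ and set $\mathcal{C}_t=\{B\subseteq A:\dim_{\mathcal{H}}(J_B)\leq t\}$. By Step 1, $\mathcal{C}_t$ is closed under increasing unions, so Zorn's lemma produces a maximal element $B^*\in\mathcal{C}_t$. If $\dim_{\mathcal{H}}(J_{B^*})<t$, apply Step 2 with $\varepsilon=t-\dim_{\mathcal{H}}(J_{B^*})$ to obtain $a_n\notin B^*$ with $\dim_{\mathcal{H}}(J_{B^*\cup\{a_n\}})<t$, contradicting maximality; hence $\dim_{\mathcal{H}}(J_{B^*})=t$. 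The endpoint $t=0$ is realized by $B=\varnothing$ (or any singleton) and $t=\dim_{\mathcal{H}}(J_A)$ by $B=A$, completing the chain of inclusions $[0,\dim_{\mathcal{H}}(J_A)]\subseteq\mathrm{DS}(A)\subseteq[0,\dim_{\mathcal{H}}(J_A)]$.

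The main obstacle is Step 2. Adding an arbitrarily large $a_n$ is a small spectral perturbation, but turning this into a uniform--in--$B$ bound on the dimension shift requires controlling the pressure derivative away from zero across \emph{all} subsystems of $A$. This is what makes the submultiplicativity hypothesis essential, and distinguishes the present setting from $P_q^*$ (where $a_1=1$ violates the hypothesis and the dimension spectrum has genuine gaps, as will be shown later in the paper).
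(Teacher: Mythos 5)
Your Step 3 has a genuine gap that the small-perturbation approach does not close. Zorn's lemma gives a maximal $B^*\in\mathcal{C}_t$, and if $\dim_{\mathcal{H}}(J_{B^*})<t$ you try to contradict maximality by adjoining some large $a_n\notin B^*$. But nothing rules out that $B^*$ is \emph{cofinite} in $A$: it may contain $a_n$ for every $n\geq N$, in which case there is no large $a_n\notin B^*$ to add, and Step 2 is vacuous. By maximality, adjoining any one of the finitely many missing elements would push the dimension strictly above $t$, and your argument has no way to exclude this ``jump'' scenario. That scenario is precisely what must be defeated using the hypothesis $a_na_m\geq a_{n+m}$, and your proposal never really engages with it. (Indeed this is exactly the phenomenon that produces the genuine gaps in $\mathrm{DS}(P_q^*)$; the cofinite maximal set there is $P_q^*\setminus\{q^k\}$, and the missing singleton cannot be adjoined without crossing the gap.)

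A second, related problem is that your small-perturbation lemma (Step 2) is asserted for arbitrary $B\subseteq A$, but the only result of this type in the paper (Lemma~\ref{Fcupb}) is proved for \emph{finite} $F$, and the Perron--Frobenius theory behind it does not transfer to infinite $B$ without additional work. Your appeal to a ``uniform lower bound on $\partial_s P_B(s_0)$ across all $B\subseteq A$'' is also not what the submultiplicativity hypothesis delivers: the paper uses $a_na_m\geq a_{n+m}$ in a sharply different way, namely to bound the tail sum from below,
\[
\sum_{j\geq 1}\Bigl(\frac{a_{k_0}}{a_{k_0+j}}\Bigr)^{2s}\;\geq\;\sum_{j\geq 1}\Bigl(\frac{1}{a_j}\Bigr)^{2s},
\]
and then to observe that the right-hand side exceeds $1$ for all finite truncations once $s<\dim_{\mathcal{H}}(J_A)$, because the spectral radius of $L_{s,I_m}$ is $>1$. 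This is the content of the break-point criterion (Lemmas~\ref{break point seq} and~\ref{break point}): whenever a finite $F$ has a strict break point $a_{k_0}$ for $(F,s)$, adjoining the \emph{entire tail} $T=\{a_n:n>k_0\}$ pushes the dimension strictly above $s$, which is exactly the statement your cofinite maximal $B^*$ needs and your argument does not supply. The paper then builds an increasing sequence of finite sets $F_1\subset F_2\subset\cdots$ whose dimensions converge to $s$, rather than invoking Zorn. To salvage your approach you would need to prove that the cofinite maximal-element case cannot occur under the hypothesis of the theorem, and that reduces to essentially the same tail estimate the paper derives via break points, so the maximality framing does not actually save any work.
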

This result also implies several results from \cite{CLU2},  in particular \cite[Theorem 4.11]{CLU2} and  \cite[Theorem 1.2]{CLU2}. We summarise some of its consequences in the following corollary. 
 \begin{corollary}\label{cor1} The following subsets of $\mathbb{N}$ have full dimension spectrum:
 \begin{enumerate}[(a)]
 \item $A=\{a+bn\colon n \in\mathbb{N}\}$  for integers $0\leq a<b$ with $b\geq 2$. 
 \item $A=\{a+bn\colon n \in\mathbb{N}\cup\{0\}\}$  for integers $2\leq a<b$. 
 \item $A_{\mathrm{primes}}=\{p\in\mathbb{N}\colon p \mbox{ prime}\}$. 
 \item $P_q=\{q^n\colon n\in\mathbb{N}\}$ for $q\geq 2$. 
 \end{enumerate}
 \end{corollary} 
 The condition in Theorem \ref{thm1}  for the set of primes  is an old result due to Ishikawa \cite[Satz III]{Is}.  For the other statements in the corollary the condition can be easily verified.

 As we shall see, the fullness of the dimension spectrum of $P_q$ is in stark contrast with the dimension spectrum of $P_q^*$, which has many gaps.  
 More specifically, given $q\geq 2$ and $k\geq 0$ let 
 \[
 I_k=\{1,\ldots,q^k\}\mbox{\quad and \quad } T_k =\{q^{k+1},q^{k+2},\ldots\}, 
 \]
 and  set
 \[
 \mu^k = \dim_{\mathcal{H}}(J_{I_{k-1}\cup T_{k}})=\dimh(J_{P^*_q\setminus\{q^k\}})\mbox{\quad and \quad} \nu^k = \dim_{\mathcal{H}}(J_{I_{k}})\mbox{\qquad for } k\geq 0.
 \]
 Note that $\mu^0= \dimh(J_{P_q})$ and $\nu^0 =\dimh(J_{\{1\}}) =0$. 
  We have the following result.
 \begin{theorem}\label{thm2}
 For all $q\geq 3$ and $k \geq 1$,  
\begin{enumerate}[(i)]
\item $\mu^k<\nu^k$ and $(\mu^k,\nu^k)\cap \mathrm{DS}(P_q^*)=\emptyset$. 
\item $\mathrm{DS}(P_q^*)$ is nowhere dense in $(\nu^k,\mu^{k+1})$. 
\end{enumerate} 
For $q=2$,  assertions (i) and (ii) hold for all $k\geq 2$. 
\end{theorem}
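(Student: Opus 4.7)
The plan is to prove the strict inequality $\mu^k<\nu^k$ first, then the gap in (i) by case analysis on whether $q^k\in B$, and finally (ii) by an iteration argument at higher powers.

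For $\mu^k<\nu^k$: both quantities are determined via Bowen's formula as the unique $s$ where the pressure of the transfer operator $\mathcal{L}_s^A f(x)=\sum_{a\in A}(a+x)^{-2s}f(1/(a+x))$ vanishes. The alphabets $I_k$ and $I_{k-1}\cup T_k$ differ by swapping the block $\{q^{k-1}+1,\ldots,q^k\}$ (present in $I_k$) for the sparse tail $T_k=\{q^{k+1},q^{k+2},\ldots\}$ (present in $I_{k-1}\cup T_k$). An elementary estimate shows that $\sum_{a=q^{k-1}+1}^{q^k}(a+x)^{-2s}$ dominates $\sum_{n\geq k+1}(q^n+x)^{-2s}$ uniformly for $s\in(0,1)$ by a factor of order $q^{k+2s}$, so the pressure for $I_k$ strictly exceeds that for $I_{k-1}\cup T_k$ at every $s$, forcing $\nu^k>\mu^k$. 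For borderline small-$k$ cases the Falk--Nussbaum rigorous numerical framework cited in the abstract can certify the inequality; the exclusion of $k=1$ when $q=2$ reflects the degeneracy $I_1=\{1,2\}\subseteq P_2^*$, where the block being added to $I_{k-1}$ is empty.

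For the gap in (i), by continuity of $\dimh$ from below for nested IFS alphabets it suffices to consider finite $B\subseteq P_q^*$. If $q^k\notin B$ then $B\subseteq P_q^*\setminus\{q^k\}$, so monotonicity together with the equality $\mu^k=\dimh(J_{P_q^*\setminus\{q^k\}})$ yields $\dimh(J_B)\leq\mu^k$ immediately. The substantive case is $q^k\in B$, where the goal is the dichotomy $\dimh(J_B)\leq\mu^k$ or $\dimh(J_B)\geq\nu^k$. The approach is a spectral-radius comparison between $\mathcal{L}_s^B$ and the reference operators for $I_{k-1}\cup\{q^k\}\cup T_k$ and for $I_k$: either $B$ contains enough companions of $q^k$ within $P_q^*$ that the operator simulates the $I_k$-dynamics well enough to push $\rho(\mathcal{L}_s^B)\geq 1$ at $s=\nu^k$, or $B$ is so sparse that $\phi_{q^k}$ alone cannot lift the spectral radius above $1$ for any $s>\mu^k$, in which case $\dimh(J_B)\leq\mu^k$. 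No value strictly between $\mu^k$ and $\nu^k$ is attainable.

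Part (ii) follows by iterating (i) at the scale of the tail $T_k$. Any $B\subseteq P_q^*$ with $\dimh(J_B)\in(\nu^k,\mu^{k+1})$ must contain $q^k$, and the structure of $B\cap T_k\subseteq\{q^{k+1},q^{k+2},\ldots\}$ --- which inherits a self-similar multiplicative structure mimicking that of $P_q^*$ --- permits an analogous gap analysis at each higher power $q^j$, generating a family of avoided sub-intervals dense in $(\nu^k,\mu^{k+1})$. This yields the nowhere-density claim. The principal obstacle is the dichotomy when $q^k\in B$ in part (i): monotonicity alone cannot exclude intermediate dimensions, so a careful transfer-operator comparison exploiting the multiplicative structure of $P_q^*$ is needed, buttressed where necessary by the certified numerics of Falk--Nussbaum for the small-$k$ base cases.
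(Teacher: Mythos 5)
Your proposal has the right skeleton but several genuine gaps, and one apparent misreading. First, $I_k$ in the paper denotes the powers $\{1, q, q^2, \ldots, q^k\}$, not the block of consecutive integers $\{1, 2, \ldots, q^k\}$; so $I_k$ and $I_{k-1}\cup T_k$ differ by swapping the \emph{single} element $q^k$ for the tail $T_k$, not by swapping a block $\{q^{k-1}+1,\ldots,q^k\}$. The ``elementary estimate'' you invoke to compare pressures is therefore not the right comparison, and even with the correct comparison a pointwise inequality between the sums $(q^k+x)^{-2s}$ and $\sum_{n>k}(q^n+x)^{-2s}$ does not directly give a spectral-radius inequality: you need the positive eigenfunction $v_s$ of $L_{s,I_k}$ and the H\"older-type control $v_s(1/(m+x)) \le e^{2s/q^k}v_s(1/(q^k+x))$ from Theorem~\ref{thm:FN}(iv), and even then the paper must split into cases ($q\ge 4$, $q=3$ with $k=1$ and $k\ge 2$, $q=2$ with $k=2$ and $k\ge 3$) and invoke rigorous numerics for the borderline values of $s_0=\dimh(J_F)$. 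This careful estimate is precisely the content of the paper's Theorem~\ref{En*} on the operation $F\mapsto F^\sharp$, which you do not carry out.

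For the gap in (i), your proposed dichotomy in the case $q^k\in B$ (``either $B$ contains enough companions\ldots or $B$ is so sparse\ldots'') is never established, and in fact the paper proves something sharper: if $\mu^k<\dimh(J_F)<\nu^k$ then $I_{k-1}\subseteq F$ (via the chain $\mu^m<\nu^m\le\nu^{k-1}<\mu^k$), so $\dimh(J_F)<\nu^k$ forces $q^k\notin F$, meaning your ``substantive case'' never occurs in the forbidden band — the contradiction then comes from $F\subseteq P_q^*\setminus\{q^k\}$. For (ii), the assertion that one can ``iterate the gap analysis at each higher power $q^j$'' gestures at the right phenomenon, but the nowhere-density requires a genuine structural lemma. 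The paper's Lemma~\ref{nwdense} shows that if $\dimh(J_{F^\sharp})<\dimh(J_F)$ holds for every finite $F$ in the relevant dimension range, then no $G$ can land in the open interval $(\dimh(J_{F^\sharp}),\dimh(J_F))$; combined with Theorem~\ref{approx} this yields nowhere density. Your proposal neither states nor proves such a criterion, and the informal idea of a ``dense family of avoided sub-intervals'' would, if pursued, reproduce exactly the $F^\sharp$-estimate you have already skipped in part (i).
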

Furthermore, the dimension spectrum of $P^*_q$  contains an initial nontrivial interval.   
\begin{theorem}\label{thm3} The interval $[0,\frac{\ln 2}{2\ln q}]$ is contained in $\mathrm{DS}(P_q^*)$ for each $q\geq 2$. 
\end{theorem}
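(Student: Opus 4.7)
My plan is to reduce Theorem 1.3 to Theorem 1.1 combined with a lower bound on $\dim_{\mathcal{H}}(J_{P_q})$. The set $P_q = \{q, q^2, \ldots\}$ satisfies the hypotheses of Theorem 1.1 (its smallest element is $q \geq 2$, and $q^n \cdot q^m = q^{n+m}$), so that theorem gives $\mathrm{DS}(P_q) = [0, \dim_{\mathcal{H}}(J_{P_q})]$; since $P_q \subseteq P_q^*$, this interval already lies inside $\mathrm{DS}(P_q^*)$. The theorem therefore reduces to the lower bound
\[
\dim_{\mathcal{H}}(J_{P_q}) \;\geq\; \frac{\ln 2}{2\ln q}.
\]

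For this bound, I would exhibit an explicit invariant probability measure on $J_{P_q}$ of dimension at least $\frac{\ln 2}{2\ln q}$. The natural candidate is the pushforward under the continued-fraction coding map $\pi$ of the Bernoulli measure on $P_q^{\mathbb{N}}$ that assigns probability $p_n = 2^{-n}$ to the digit $q^n$ (a valid probability since $\sum_n 2^{-n}=1$). By the pointwise dimension formula for the Gauss map $T$, this pushforward has dimension $H/\chi$ almost everywhere, where the entropy is $H = -\sum_n p_n \log p_n = 2\log 2$ and the Lyapunov exponent is $\chi = -2\int \log \pi(\omega)\,d\mu$. The desired inequality $H/\chi \geq \frac{\ln 2}{2\ln q}$ is equivalent to controlling $\chi$ by $4\log q$.

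The main obstacle is precisely this estimate on $\chi$. Decomposing $-\log\pi(\omega) = \log(q^{n_1} + \pi(\sigma\omega))$, where $\sigma$ is the shift, and using the Bernoulli independence of $\omega_1$ and $\sigma\omega$ yields $\chi = 4\log q + 2 \int \log(1+\pi(\sigma\omega)/q^{n_1})\,d\mu$, whose correction integral is strictly positive. To close this gap I would replace the Bernoulli measure by the Gibbs measure for the potential $-s_0 \log|T'|$ at $s_0 = \frac{\ln 2}{2\ln q}$; a transfer-operator analysis combined with the identity $\mathcal{L}_{s_0}\mathbf{1}(0) = \sum_n q^{-2ns_0} = 1$ for the operator $\mathcal{L}_s f(x) = \sum_n (q^n+x)^{-2s}\,f(1/(q^n+x))$ should then deliver $\dim_{\mathcal{H}}(J_{P_q}) \geq s_0$. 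A back-up strategy, should the direct bound on $\dim_{\mathcal{H}}(J_{P_q})$ fall short, is to consider subsets of $P_q^*$ that include the digit $1$: the two-branch Bowen estimate at $x=0$, namely $1 + q^{-2s_0} = 1 + 1/2 > 1$, gives $\dim_{\mathcal{H}}(J_{\{1, q\}}) > s_0$, and a continuity/IVT argument on infinite extensions $\{1\} \cup T$ with $T \subseteq P_q$ would then realise every value of $[0, s_0]$ as some $\dim_{\mathcal{H}}(J_B)$.
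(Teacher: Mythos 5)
Your reduction rests on the inequality $\dim_{\mathcal{H}}(J_{P_q}) \geq \frac{\ln 2}{2\ln q}$, and that inequality is \emph{false}: in fact $\dim_{\mathcal{H}}(J_{P_q}) < \frac{\ln 2}{2\ln q}$ strictly. To see this, take $s_0 = \frac{\ln 2}{2\ln q}$ (so $q^{-2s_0}=1/2$) and test the operator $L_{s_0,P_q}$ against the eigenvector $w(x) = (\lambda_q+x)^{-2s_0}$ of $L_{s_0,\{q\}}$, where $\lambda_q = \tfrac{q+\sqrt{q^2+4}}{2}$ satisfies $\lambda_q^2 = q\lambda_q+1$. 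The $n=1$ term of $(L_{s_0,P_q}w)/w$ equals $\lambda_q^{-2s_0}$, which is strictly less than $q^{-2s_0}=1/2$ because $\lambda_q>q$; the remaining terms are each strictly less than $q^{-2ns_0}=2^{-n}$ on the invariant interval. Summing, the ratio is bounded strictly below $1$, hence $r(L_{s_0,P_q})<1$ and $\dim_{\mathcal{H}}(J_{P_q})<s_0$. You yourself flag this when you note the Lyapunov correction integral is ``strictly positive''; that observation is fatal to the reduction, not a technicality to be patched. The theorem is genuinely about $P_q^*$ (hence the digit $1$ is indispensable), and it cannot be obtained by restricting to subsets of $P_q$ and invoking Theorem~\ref{thm1}.

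The back-up strategy does not repair this. First, the ``two-branch Bowen estimate at $x=0$, namely $1+q^{-2s_0}>1$'' is the value $(L_{s_0,\{1,q\}}\mathbf{1})(0)$, which by Lemma~\ref{basic} bounds the spectral radius from \emph{above}, not from below; the lower bound from the constant test function is $(L_{s_0,\{1,q\}}\mathbf{1})(1)=2^{-2s_0}+(q+1)^{-2s_0}$, which is $<1$ (e.g.\ $5/6$ for $q=2$), so no conclusion follows. Moreover, since $\theta_1'(0)=-1$ is not a contraction, the naive pressure equation at $x=0$ degenerates. Second, and more seriously, ``a continuity/IVT argument on infinite extensions $\{1\}\cup T$'' is precisely the point at issue: there is no a priori reason the map $T\mapsto\dim_{\mathcal{H}}(J_{\{1\}\cup T})$ has interval range, and indeed Theorem~\ref{thm2} shows that for $P_q^*$ the spectrum has many gaps. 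The paper's actual proof uses the break-point criterion of Lemma~\ref{break point}: given $0<s<\frac{\ln 2}{2\ln q}$ and a finite $F$ with strict break point $q^{k_0}$, one applies the positive eigenvector $v_s$ of $L_{s,F\cup\{q^{k_0}\}}$ and the monotonicity of $v_s$ to obtain $L_{s,F\cup T_m}v_s \geq L_{s,F}v_s + \bigl(\sum_{j=1}^m q^{-2js}\bigr)(q^{k_0}+x)^{-2s}v_s(1/(q^{k_0}+x))$; the threshold $\frac{\ln 2}{2\ln q}$ enters exactly because $\sum_{j\geq 1}q^{-2js}=\frac{1}{q^{2s}-1}>1$ iff $s<\frac{\ln 2}{2\ln q}$, and Lemma~\ref{scaling} then forces $r(L_{s,F\cup T_m})>1$ for large $m$. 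This is the machinery that replaces your ``IVT argument,'' and your proposal supplies none of it.
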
  
We note that in \cite[Theorem 1.1]{CLU1} it was shown that  the dimension spectrum of $A$ always contains an interval $[0,\sigma_0]$, where $\sigma_0= \inf\{s>0\colon   \sum_{n\in A} n^{-2s}<\infty\}$ is the {\em finiteness parameter}.
In the case of $P_q$ and $P_q^*$, however, $\sigma_0 =0$. 

We will also analyse the dimension spectrum for sets generated by a monomial, $M_q=\{n^q\colon n\in\mathbb{N}\}$, and prove the following result.  
\begin{theorem}\label{Mq} The dimension spectrum of $M_q$ satisfies: 
\begin{enumerate}[(i)]
\item For $q\in\{1, 2, 3, 4, 5\}$ we have that $\mathrm{DS}(M_q) = [0,\dimh(J_{M_q})]$. 
\item For  $q\geq 6$ we have 
\[\dimh(J_{M_q\setminus\{2^q\}})<\dimh(J_{\{1,2^q\}})
\]
and  $\mathrm{DS}(M_q)\cap (\dimh(J_{M_q\setminus\{2^q\}}), \dimh(J_{\{1,2^q\}}))$ is empty.
\item For $q\in \{6,7,8\}$ we have that 
\[
 \mathrm{DS}(M_q) = [0,\dimh(J_{M_q\setminus\{2^q\}})]\cup [\dimh(J_{\{1,2^q\}}),\dimh(J_{M_q})].
\] 
\item For $q\in\{9,10\}$ we have that $\dimh(J_{M_q\setminus\{3^q\}})<\dimh(J_{\{1, 2^q, 3^q\}})$ and 
 \[
 \mathrm{DS}(M_q) = [0,\dimh(J_{M_q\setminus\{2^q\}})]\cup [\dimh(J_{\{1,2^q\}}),\dimh(J_{M_q\setminus\{3^q\}})] \cup [\dimh(J_{\{1, 2^q, 3^q\}}),\dimh(J_{M_q})].
 \]
 \item  $\mathrm{DS}(M_q)$ is the disjoint union of finitely many nontrivial closed intervals for each $q\in\mathbb{N}$. 
\end{enumerate}
\end{theorem}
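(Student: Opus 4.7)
My approach combines a structural reduction using Theorem~\ref{thm1} with rigorous numerical bounds on Hausdorff dimensions obtained via the Falk-Nussbaum transfer-operator framework. Note first that $M_q\setminus\{1\}=\{n^q:n\geq 2\}$ satisfies the hypotheses of Theorem~\ref{thm1}, because $(n+1)^q(m+1)^q\geq (n+m+1)^q$ for all $m,n\geq 1$. Hence $\mathrm{DS}(M_q\setminus\{1\})=[0,\dimh(J_{M_q\setminus\{1\}})]$, and more generally $\mathrm{DS}(M_q\setminus S)=[0,\dimh(J_{M_q\setminus S})]$ for every finite $S\subseteq M_q$ containing $1$. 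So the only dimensions that might create a gap or force a non-interval piece are those of the form $\dimh(J_{\{1\}\cup C})$ with $C\subseteq M_q\setminus\{1\}$, and parts (i)-(iv) reduce to understanding precisely which such values appear.

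\textbf{Gap/fullness dichotomy.} For each $n\geq 2$, suppose $d\in(\dimh(J_{M_q\setminus\{n^q\}}),\dimh(J_{\{1,2^q,\ldots,n^q\}}))$ is realized as $d=\dimh(J_B)$. Then for any $k^q\notin B$ with $k\leq n$ one has $B\subseteq M_q\setminus\{k^q\}$, so $d\leq\dimh(J_{M_q\setminus\{k^q\}})$. Using the numerically verifiable monotonicity $\dimh(J_{M_q\setminus\{k^q\}})\leq \dimh(J_{M_q\setminus\{n^q\}})$ for $k\leq n$ (reflecting the fact that removing a smaller element decreases dimension more), this forces $\{1,2^q,\ldots,n^q\}\subseteq B$, hence $d\geq \dimh(J_{\{1,2^q,\ldots,n^q\}})$, a contradiction. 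Applied with $n=2$ this produces the gap in (ii); applied with $n=3$ it produces the second gap in (iv); in each case one must numerically certify the strict inequality $\dimh(J_{M_q\setminus\{n^q\}})<\dimh(J_{\{1,2^q,\ldots,n^q\}})$ in the stated range of $q$. Conversely, when $\dimh(J_{\{1,2^q,\ldots,n^q\}})\leq\dimh(J_{M_q\setminus\{n^q\}})$ (regime of (i), and for the upper pieces of (iii) and (iv)), the interval is filled by starting from the base $\{1,2^q,\ldots,n^q\}$ and adjoining the remaining $k^q$ one at a time: Theorem~\ref{thm1} applied to $M_q\setminus\{1,2^q,\ldots,n^q\}$ supplies a full base interval, and Perron-Frobenius estimates show that the incremental jumps $\dimh(J_{D\cup\{k^q\}})-\dimh(J_D)$ cannot exceed the width of the already-covered sub-interval, so no new gap opens. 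Thus (i)-(iv) reduce to a finite collection of rigorous dimension inequalities between specific sets, verifiable by certified computation of the leading eigenvalue of truncated Perron-Frobenius operators.

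\textbf{Finiteness for (v) and main obstacle.} Since $\dimh(J_{\{k^q,(k+1)^q,\ldots\}})\to 0$ as $k\to\infty$, only finitely many $k^q$ can be responsible for a spectral gap; combined with the closedness of $\mathrm{DS}(M_q)$ (a continuity property of the spectral radius of the associated Perron-Frobenius operators under alphabet truncation), this yields that $\mathrm{DS}(M_q)$ is a finite disjoint union of nontrivial closed intervals. The main obstacle throughout will be producing numerical bounds sharp enough to distinguish the borderline cases $q=5$ vs.\ $q=6$ (where the first gap opens), $q=8$ vs.\ $q=9$ (where the second gap opens), and $q=10$ vs.\ $q=11$. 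The Falk-Nussbaum method reduces these to high-dimensional certified eigenvalue problems, but controlling the truncation error coming from the infinite tail of $M_q$, whose cylinder lengths decay only polynomially like $n^{-2q}$, is delicate, especially for the smallest values of $q$.
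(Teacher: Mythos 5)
Your reduction starts correctly: $M_q\setminus\{1\}=\{(n+1)^q:n\ge1\}$ satisfies $a_na_m\ge a_{n+m}$ since $(n+1)(m+1)\ge n+m+1$, so Theorem~\ref{thm1} gives a full initial interval $[0,\dimh(J_{M_q\setminus\{1\}})]$. But the ``more generally'' claim that $\mathrm{DS}(M_q\setminus S)=[0,\dimh(J_{M_q\setminus S})]$ for \emph{every} finite $S\ni 1$ is false: with $S=\{1,3^q,4^q\}$ we get $M_q\setminus S=\{2^q,5^q,6^q,\dots\}$ and $a_1a_1=4^q<5^q=a_2$, so the multiplicative hypothesis of Theorem~\ref{thm1} fails. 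The weaker statement you actually use --- that any potential gap lies above $\dimh(J_{M_q\setminus\{1\}})$ and is governed by sets containing $1$ --- does hold, but it follows from the single legitimate application of Theorem~\ref{thm1} to $M_q\setminus\{1\}$, not from the general claim.

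The more serious gaps are in the ``converse'' direction and in part (v). Your assertion that, because the incremental jumps $\dimh(J_{D\cup\{k^q\}})-\dimh(J_D)$ are small relative to the covered subinterval, ``no new gap opens'' is not a proof: small jumps between achieved dimensions do not by themselves force the set of achieved values to fill an interval. What is actually required (and what the paper supplies) is the verification of Lemma~\ref{break point}'s hypothesis at \emph{every} target $s$: for every finite $F$ with a strict break point $n_0^q$ one shows $\dimh(J_{F\cup\{k^q\colon k>n_0\}})>s$, via explicit tail-sum lower bounds $\gamma(q,n_0,s)>1$, with the small-$n_0$ borderline cases certified numerically from Table~\ref{table} and the $\dimh(J_{M_q})$ upper bounds of Lemma~\ref{ubounds}. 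Similarly, for (v), the observation that $\dimh(J_{\{k^q,(k+1)^q,\dots\}})\to 0$ does not by itself bound the number of components; the paper's notion of a critical break point value (Theorems~\ref{Mqcritical} and~\ref{intervals}, with $k^*=2q$ for $q\ge 11$) is precisely the device needed to turn this intuition into a proof, and the cases $q\le 10$ are then dispatched explicitly by parts (i)--(iv). The gap/emptiness half of your dichotomy is correct and matches the paper's argument via Proposition~\ref{prop:increasing}, and you rightly flag the strict inequalities $\dimh(J_{M_q\setminus\{n^q\}})<\dimh(J_{\{1,2^q,\ldots,n^q\}})$ as the numerical crux.
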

The case  $q=1$ is the Texan Conjecture established in \cite{KZ}, and the case $q=2$ was treated in \cite[Theorem 1.3]{CLU2}. To prove the final statement in Theorem \ref{Mq} we will establish a general criterion on subsets $A$ of $\mathbb{N}$ that implies that its dimension spectrum  consists of finitely many nontrivial disjoint closed interval, see Theorem \ref{intervals}.  In \cite{CLU1} in the remark after Conjecture 1.3 the authors raised the question if there exists a conformal  iterated function systems with a prescribed finite number of connected components. Theorem \ref{Mq} seems to suggest that this might be realised using the sets $M_q$, but we have no proof at present.

Throughout the paper the $a_n$'s and $q$ will be integers, although this is not strictly required for several of the statements presented, in particular in Theorem \ref{thm1}. In these instances it sufficient to know that the maps $\theta_n\colon x\mapsto (a_n+x)^{-1}$ have disjoint ranges on the invariant set. 

In our analysis we will use  Perron-Frobenius (or transfer) operators. Given $F\subset \mathbb{N}$ finite and $s\geq 0$, the {\em Perron-Frobenius operator}, $L_{s,F}\colon C([0,1])\to C([0,1])$, on the Banach space of real continuous functions on $[0,1]$ is given by 
\[
(L_{s,F}f)(x) = \sum_{n\in F} \left(\frac{1}{n+x}\right)^{2s} f\left(\frac{1}{n+x}\right)\mbox{\quad for $x\in [0,1]$},
\]
which is a positive bounded linear operator on $C([0,1])$. Here positive means that if $f\in C([0,1])$ with $f(x)\geq 0$ for all $x\in [0,1]$, then $(L_{s,F}f)(x)\geq 0$ for all $x\in [0,1]$.  

The operator $L_{s,F}$ can be considered on other Banach spaces. For instance on the real Banach space $C^k([0,1])$ consisting of $k$-times continuously differentiable functions $f\colon [0,1]\to \mathbb{R}$ (respectively the complex Banach space $C_\mathbb{C}^k([0,1])$ with functions $f\colon [0,1]\to \mathbb{C}$). It can also be considered  on the Banach space $C^{0,\alpha} ([0,1])$ (respectively $C^{0,\alpha}_\mathbb{C}([0,1])$) consisting of real (complex) functions on $[0,1]$ that  are H\"older continuous with H\"older exponent $0<\alpha\leq 1$.  Indeed, $L_{s,F}$  is a bounded real linear operator  from $C^{0,\alpha}([0,1])$ to itself, and also from  $C^k([0,1])$ to itself. 
The operator can be extended in the usual way to a complex linear operator to $C_\mathbb{C}^{0,\alpha}([0,1])$ and also to $C^k_\mathbb{C}([0,1])$. 
If $L_{s,F}$ is considered as a  bounded complex linear operator on  $C_\mathbb{C}^{0,\alpha}([0,1])$ or $C^k_\mathbb{C}([0,1])$, we shall abuse notation and write $\sigma(L_{s,F})\subseteq \mathbb{C}$ to denote the spectrum of $L_{s,F}$. 


Rather than using the pressure function and Bowen's parameter from thermodynamic formalism \cite{MU}, which involve the iterates of the operator $L_{s,F}$ to analyse its spectral radius, we approximate the strictly positive eigenvector of $L_{s,F}$ to estimate the spectral radius. In particular, we use the eigenvector given in Lemma \ref{eigenvector}, which is surprisingly powerful. In this context it is convenient to use the next theorem, which  is a special case of more general theorems concerning the spectral theory of Perron-Frobenius type operators that can be found in: \cite[Theorem 3.1]{FN2},  \cite[Section 2.2]{LN}, \cite[Theorem 5.4]{N0}, and \cite[Theorem 6.5]{N1}. It  was also used in \cite{CLU2} where one can find a detailed discussion about its connection to known results in thermodynamic formalism, see \cite[Remark 3.1]{CLU2}.  The theory of positive operators plays an essential role in many areas of ergodic theory, see for instance \cite{Ba} for related results. 

\begin{theorem}\label{thm:FN}
For $F\subset \mathbb{N}$ finite, with  $\gamma=\min\{n\colon n\in F\}$,  $s>0$, $0<\alpha\leq 1$ and $k\in\mathbb{N}$ the following assertions hold.
\begin{enumerate}[(i)]
\item If  $L_{s,F}$ is considered as an operator from $C^{0,\alpha}([0,1])$ to itself (respectively from $C^k([0,1])$ to itself), then it has a strictly positive eigenvector $v_{s,F}\in C^{0,\alpha}([0,1])$  (respectively $v_{s,F}\in C^k([0,1])$) with corresponding eigenvalue $\lambda_{s,F}>0$. The eigenvector $v_{s,F}$ is unique up to  scaling, and $\lambda_{s,F}$ is independent of $\alpha$ and $k$ and equals the spectral radius of $L_{s,F}\colon C^{0,\alpha}([0,1])\to C^{0,\alpha}([0,1])$ (respectively $L_{s,F}\colon C^k([0,1])\to C^k([0,1])$). In particular, $v_{s,F}\in C^k([0,1])$ for all $k\in\mathbb{N}$, hence it is a $C^\infty$-function. It is also the unique positive eigenvector of $L_{s,F}\colon C([0,1])\to C([0,1])$ and $\lambda_{s,F}$ is the spectral radius, denoted $r(L_{s,F})$, of $L_{s,F}\colon C([0,1])\to C([0,1])$. 
\item The spectrum $\sigma(L_{s,F})\subseteq \mathbb{C}$ of $L_{s,F}\colon C^{0,\alpha}([0,1])\to C^{0,\alpha}([0,1])$ (or $L_{s,F}\colon C^k([0,1])\to C^k([0,1])$) satisfies
\[
\sup\left\{\frac{|z|}{\lambda_{s,F}}\colon z\in \sigma(L_{s,F})\setminus\{\lambda_{s,F}\}\right\}<1.
\]
\item The function $s\mapsto \lambda_{s,F}$ is strictly decreasing and continuous.
\item The function $v_{s,F}$ is a decreasing on $[0,1]$ and 
\[
-\frac{2s}{\gamma}\leq \frac{v'_{s,F}(x)}{v_{s,F}(x)}<0\mbox{\qquad for all }x\in [0,1].
\]
\item  The unique value $s$ such that $\lambda_{s,F} =1$ is equal to $\dimh(J_F)$.
\end{enumerate} 
\end{theorem}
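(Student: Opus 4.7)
The plan is to prove Theorem~\ref{thm:FN} by combining compactness of $L_{s,F}$ on smooth function spaces with positivity and cone-preservation arguments, and closing with the Bowen-type dimension formula. For part~(i), I would first establish that $L_{s,F}$ is compact on $C^\alpha([0,1])$ and $C^k([0,1])$: the inverse branches $\theta_n(x) = 1/(n+x)$ are smooth and uniformly contracting on an iterate (needed to accommodate $n = 1$, where $\theta_1'(0) = -1$), so $L_{s,F}$ maps norm-bounded sets into uniformly H\"older (respectively uniformly smooth) families, and Arzel\`a-Ascoli applies. The Krein-Rutman theorem then yields a positive eigenvector $v_{s,F}$ at the spectral radius $\lambda_{s,F}$. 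Uniqueness and strict positivity follow from the strong positivity of some iterate $L_{s,F}^N$: since the $N$-fold compositions of the inverse branches cover $[0,1]$ for $N$ large, $L_{s,F}^N f$ is strictly positive for any nonzero $f \geq 0$. Regularity is bootstrapped from the eigenvector equation $v_{s,F} = \lambda_{s,F}^{-1} L_{s,F} v_{s,F}$ and the $C^\infty$ smoothness of the $\theta_n$, and equality of $\lambda_{s,F}$ across $C([0,1])$, $C^\alpha([0,1])$ and $C^k([0,1])$ follows by applying Gelfand's formula to the common eigenvector. For part~(ii), the spectral gap can be extracted from Birkhoff's theorem in Hilbert's projective metric on the positive cone, using the strong positivity mentioned above; alternatively, a Lasota-Yorke inequality for the $C^\alpha$ seminorm yields quasi-compactness together with the gap.

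For part~(iii), monotonicity is immediate from the pointwise strict inequality $(1/(n+x))^{2s'} < (1/(n+x))^{2s}$ for $s' > s$ on the set where $1/(n+x) < 1$ (which has positive measure), so $L_{s',F} \leq L_{s,F}$ with strict inequality somewhere; comparison against the positive eigenvectors then forces $\lambda_{s',F} < \lambda_{s,F}$. Continuity of $s \mapsto \lambda_{s,F}$ follows from the real-analyticity of $s \mapsto L_{s,F}$ in operator norm together with perturbation theory for isolated simple eigenvalues of compact operators. For part~(iv), I would verify invariance of the cone $\{f \in C^1([0,1]) : f > 0,\ -2s/\gamma \leq f'/f \leq 0\}$ under $L_{s,F}$ by a direct logarithmic-derivative computation: writing $h_n(x) = (n+x)^{-2s} f(1/(n+x))$ one checks that
\[
\frac{h_n'(x)}{h_n(x)} = -\frac{1}{n+x}\left[2s + \frac{1}{n+x}\cdot\frac{f'(1/(n+x))}{f(1/(n+x))}\right] \in [-2s/\gamma,\, 0],
\]
using $n + x \geq \gamma$ and the cone bound on $f$; since $(L_{s,F}f)'/(L_{s,F}f)$ is a convex combination of the $h_n'/h_n$ with weights $h_n/\sum_m h_m$, it too lies in that interval. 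Uniqueness of the positive eigenvector and iteration then deliver the bound for $v_{s,F}$, and the strict negativity $v_{s,F}'/v_{s,F} < 0$ follows from strict positivity of the bracket.

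For part~(v), I would invoke the Bowen-type dimension formula: by the thermodynamic formalism for conformal finite iterated function systems, $\dimh(J_F)$ is the unique $s \geq 0$ at which the topological pressure $P(s)$ vanishes, and the Ruelle-Perron-Frobenius identification gives $P(s) = \log \lambda_{s,F}$. Combined with part~(iii), together with $\lambda_{0,F} = |F| \geq 1$ (since $L_{0,F}\mathbf{1} = |F|\mathbf{1}$) and $\lambda_{s,F} \to 0$ as $s \to \infty$, this pins down the unique crossing at $\dimh(J_F)$. The main obstacle I anticipate is the case $\gamma = 1$ in parts~(i) and~(ii), where $\theta_1$ is not uniformly contracting at $x = 0$ and one must either pass to an iterate or invoke the finer Mauldin-Urba\'nski framework; the spectral gap in (ii) also demands the most substantial functional-analytic input, namely Birkhoff contraction in Hilbert's metric or a carefully calibrated Lasota-Yorke estimate.
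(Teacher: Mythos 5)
The paper does not prove Theorem~\ref{thm:FN} at all: it states it as ``a special case of more general theorems'' in the cited references \cite{FN2}, \cite{LN}, \cite{N0}, \cite{N1}, and the Remark following the theorem even flags that one sub-claim (uniqueness of the positive eigenvector on $C([0,1])$) is not proved anywhere in the literature and is deliberately left unproved because it is not used. So there is no proof in the paper to compare your sketch against; I can only assess the sketch on its own terms and against what the cited sources actually do.

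The central gap in your sketch is the compactness claim for $L_{s,F}$ on $C^\alpha([0,1])$ or $C^k([0,1])$. This is false, and the Arzel\`a--Ascoli route you propose does not repair it: $L_{s,F}$ maps a bounded set in $C^\alpha$ (or $C^k$) to a set that is again bounded in $C^\alpha$ (or $C^k$), but it does not gain regularity --- the $\alpha$-H\"older seminorm (resp.\ the $k$-th derivative) of $L_{s,F}f$ still involves the $\alpha$-H\"older seminorm (resp.\ $k$-th derivative) of $f$ through the chain rule, merely multiplied by a contraction factor. A bounded subset of $C^\alpha$ is not precompact in $C^\alpha$; it is precompact only in the weaker topology of $C([0,1])$. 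So Krein--Rutman in the form you invoke is unavailable. What is true, and what the cited sources actually use, is quasi-compactness (via a Lasota--Yorke inequality giving an essential spectral radius strictly smaller than $\lambda_{s,F}$) or, more to the point for these references, Birkhoff contraction in Hilbert's projective metric on the cone $\{f > 0 : |f'/f| \le c\}$, which delivers existence, uniqueness, and the spectral gap in one stroke. You list both of these as ``alternatives'' in your discussion of part~(ii), but they must in fact be the primary route for part~(i) as well --- compactness cannot be the starting point. The rest of your sketch is sound: the logarithmic-derivative cone computation in~(iv) is correct and is essentially the argument one finds in \cite{FN2}; the strict monotonicity argument for~(iii) via operator domination and comparison at the positive eigenvector is fine (noting that strictness at $n=1$, $x=0$ requires a word, since $\theta_1(0)=1$); and~(v) is the standard Bowen--Mauldin--Urba\'nski pressure identification, $P(s) = \log\lambda_{s,F}$, combined with~(iii).
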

As noted in \cite{FN2} the inequality in the fourth assertion in Theorem \ref{thm:FN} implies that 
\begin{equation}\label{ineqv_s}
v_{s,F}(x)\leq v_{s,F}(y)e^{\frac{2s|x-y|}{\gamma}}\mbox{\qquad for all }x,y\in[0,1].
\end{equation}

\begin{remark}
The fact, mentioned in the first assertion of Theorem \ref{thm:FN}, that the strictly positive (normalised) eigenvector   $v_{s,F}$ of $L_{s,F}\colon C([0,1])\to C([0,1])$  is unique,  is not proved in the literature to the best of our knowledge, but holds for a much larger class of Perron-Frobenius type operators than the operators $L_{s,F}$. As we will not require this fact here, we omit the proof.  
\end{remark}

\section{Preliminaries}

In  this section we recall some preliminary results that we will use throughout the paper.   For $a<b$, the Banach space $(C([a,b]),\|\cdot\|_\infty)$ is a complete order-unit space with cone $C([a,b])_+=\{f\in C([a,b])\colon f(x)\geq 0\mbox{ for all }x\in[a,b]\}$ and order-unit $u\colon x\mapsto 1$ for all $x$. So the partial ordering on $C([a,b])$ is given by $f\leq g$ if $f(x)\leq g(x)$ for all $x\in [a,b]$. 
	\begin{lemma}\label{scaling}
		Let $f,g \in C([a,b])$ be strictly positive. For each $0< \lambda < 1$,  there exists a  $\mu \in (\lambda,1)$ such that $ f + \lambda g \leq \mu(f + g)$.
	Likewise,  for each $\lambda > 1$,  there exists a $\mu \in (1, \lambda]$ such that 
		$ \mu(f + g) \leq  f + \lambda g$.
	\end{lemma}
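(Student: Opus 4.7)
The plan is to reduce both inequalities to an elementary condition on a single scalar, namely $M := \sup_{x \in [a,b]} f(x)/g(x)$. Since $f$ and $g$ are continuous and strictly positive on the compact interval $[a,b]$, the ratio $f/g$ attains its maximum, so $M$ is a finite and strictly positive real number. The pointwise inequality $f + \lambda g \leq \mu(f+g)$ is equivalent to $(1-\mu)f \leq (\mu - \lambda)g$, and for $\mu \in (\lambda,1)$ this is in turn equivalent to $f(x)/g(x) \leq (\mu - \lambda)/(1-\mu)$ for every $x$, i.e.\ $M \leq (\mu - \lambda)/(1-\mu)$. Similarly, for the reverse direction with $\lambda > 1$, the inequality $\mu(f+g) \leq f + \lambda g$ rewrites, for $\mu \in (1,\lambda]$, as $M \leq (\lambda - \mu)/(\mu - 1)$.

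The key observation is that a single explicit choice works for both cases: set $\mu := (M+\lambda)/(M+1)$. First I would check that this $\mu$ lies in the required open interval. For $0 < \lambda < 1$ we have $\mu < 1 \Leftrightarrow \lambda < 1$ and $\mu > \lambda \Leftrightarrow M(1-\lambda) > 0$, both of which hold. For $\lambda > 1$ we have $\mu > 1 \Leftrightarrow \lambda > 1$ and $\mu \leq \lambda \Leftrightarrow M(\lambda - 1) \geq 0$, both of which again hold. Then I would verify the scalar inequality: with this choice of $\mu$ one computes directly that $(\mu - \lambda)/(1-\mu) = M$ in the first case and $(\lambda - \mu)/(\mu - 1) = M$ in the second, so the defining scalar inequality holds with equality at the extremal point and the desired inequalities follow pointwise on $[a,b]$.

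There is really no obstacle here: the only subtlety is making sure that $M$ is finite and strictly positive, which is precisely where the strict positivity of $f$ and $g$ together with compactness of $[a,b]$ enters (otherwise $M$ could be $+\infty$ or $0$, and the argument would break). Once $M \in (0,\infty)$ is in hand, the rest is elementary algebra with the displayed formula for $\mu$.
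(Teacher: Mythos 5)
Your proof is correct, and it is essentially the paper's argument made explicit: the paper sets $h(x) = \frac{f(x)+\lambda g(x)}{f(x)+g(x)}$ and takes its maximum (resp.\ minimum), and since $h$ can be written as a monotone function of the ratio $r = f/g$, namely $h = \frac{r+\lambda}{r+1}$, the extremal value of $h$ is attained exactly where $f/g$ attains $M$, yielding your formula $\mu = \frac{M+\lambda}{M+1}$. One tiny remark: in the $\lambda>1$ case your computation actually gives $\mu < \lambda$ strictly (since $M>0$), which is a hair stronger than the stated $\mu\in(1,\lambda]$ and also necessary, as $\mu=\lambda$ would force $(\lambda-1)f\le 0$.
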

	\begin{proof}
		Since $f$ and $g$ are strictly positive on $[a,b]$, the function $h(x) = \frac{f(x) + \lambda g(x)}{ f(x) + g(x)}$ 
		is well defined, strictly positive, and continuous. So, $h$ attains a maximum, say at  $x_0\in [a,b]$. Set $ \mu  = h(x_0)>0$. Then 
		\[ \mu = h(x_0) = \frac{f(x_0) + \lambda g(x_0)}{f(x_0) + g(x_0)} < 1.\]
		Thus, $\mu < 1$ and $f + \lambda g \leq \mu(f + g)$.
		As $\lambda (f(x) + g(x)) <  f(x) + \lambda g(x) \leq \mu (f(x) + g(x))$ for all $x\in [a,b]$, we also have  that $\lambda < \mu$.
		
		The second assertion can be derived in the same way by considering the minimum of $h$. 
	\end{proof}
	Recall that the spectral radius, $r(L)$, of a bounded linear operator $L\colon C([a,b])\to C([a,b])$ satisfies $r(L) = \lim_k \|L^k\|^{1/k}$, see \cite[p.197]{Con}. 
 The following basic fact is useful to estimate the spectral radius of the positive operators $L_{s,F}$ and will be used throughout.
	\begin{lemma}\label{basic} 
	Suppose that $L \colon C([a,b]) \to C([a,b])$ is a positive linear operator. If $w\in C([a,b])$ is  strictly positive and  $\alpha w\leq L w\leq \beta w$, then $\alpha\leq r(L)\leq \beta$. 
	\end{lemma}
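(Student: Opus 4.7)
The plan is to iterate the sandwich inequality and combine it with the spectral radius formula $r(L)=\lim_k\|L^k\|^{1/k}$ that is recalled just before the statement. I would proceed in three short steps.

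First, I would establish by induction on $k\ge 1$ that
\[
\alpha^k w \le L^k w \le \beta^k w
\]
on $[a,b]$. The base case is the hypothesis. For the induction step one applies $L$ to both sides of $\alpha^{k-1}w\le L^{k-1}w\le \beta^{k-1}w$; positivity of $L$ preserves the order (if $f\le g$ then $L(g-f)\ge 0$, i.e. $Lf\le Lg$), so one gets $\alpha^{k-1}Lw\le L^k w\le \beta^{k-1}Lw$, and then the hypothesis $\alpha w\le Lw\le\beta w$ together with another application of positivity (or simple scalar manipulation on the outer terms) yields the claim.

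Next, since $w$ is continuous and strictly positive on the compact interval $[a,b]$, there exist constants $0<c\le C$ with $c\cdot\mathbf{1}\le w\le C\cdot\mathbf{1}$, where $\mathbf{1}$ denotes the constant function $1$. Combined with the previous display this gives, pointwise,
\[
\alpha^k c\cdot\mathbf{1}\;\le\; \alpha^k w\;\le\; L^k w\;\le\; L^k(C\mathbf{1})\;=\;C\,L^k\mathbf{1}
\quad\text{and}\quad
L^k\mathbf{1}\;\le\; \tfrac{1}{c}L^k w\;\le\; \tfrac{\beta^k C}{c}\,\mathbf{1}.
\]
The key auxiliary fact I would invoke here is that for a positive linear operator on $C([a,b])$ one has $\|L^k\|=\|L^k\mathbf{1}\|_\infty$: this follows because any $f$ with $\|f\|_\infty\le 1$ satisfies $-\mathbf{1}\le f\le\mathbf{1}$, so by positivity $|L^k f|\le L^k\mathbf{1}$ pointwise.

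Finally, the two displays give
\[
\tfrac{c}{C}\,\alpha^k \;\le\; \|L^k\mathbf{1}\|_\infty \;=\; \|L^k\| \;\le\; \tfrac{C}{c}\,\beta^k,
\]
so taking $k$-th roots and letting $k\to\infty$, the factors $(c/C)^{1/k}$ and $(C/c)^{1/k}$ tend to $1$, and Gelfand's formula $r(L)=\lim_k\|L^k\|^{1/k}$ yields $\alpha\le r(L)\le\beta$. There is no real obstacle here; the only point one must be a little careful about is the identity $\|L^k\|=\|L^k\mathbf{1}\|_\infty$ for positive operators, which I would state explicitly rather than leave implicit.
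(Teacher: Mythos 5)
Your proof is correct and follows essentially the same route as the paper: iterate $\alpha^k w\le L^k w\le\beta^k w$, compare $w$ with the constant function (the paper writes $\mu w\le u\le\nu w$ rather than your $c\mathbf{1}\le w\le C\mathbf{1}$, but these are the same bounds), invoke $\|L^k\|=\|L^k u\|_\infty$ for positive operators, and finish with Gelfand's formula. The only difference is that you spell out the justification of $\|L^k\|=\|L^k\mathbf{1}\|_\infty$, which the paper takes as known.
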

	\begin{proof} Let $u\colon x\mapsto 1$ be the order-unit. As $L$ is positive, we have that $\|L^k\| = \|L^k u\|_\infty$. Moreover, there exists a $\mu,\nu>0$ such that $\mu w\leq u\leq \nu w$. Thus, 
	$\mu\alpha^k w\leq \mu L^k w\leq L^k u \leq \nu L^k w\leq \nu\beta^k w$, so that $\mu \alpha^k\|w\|_\infty\leq \|L^k\|\leq \nu\beta^k\|w\|_\infty$.  As $ r(L) = \lim_k \|L^k\|^{1/k}$, this implies that $\alpha \leq r(L) \leq \beta$. 
	\end{proof}
	
The following statement can be found in  \cite[Claim 3.1]{DS}, which contains an inaccuracy in its proof. To be precise, the assertion on \cite[page 80]{DS} that $g$ is an eigenvector of $L'$ seems unjustified. For completeness we give a  proof in the Appendix.   
	\begin{lemma}\label{Fcupb}
		If $F \subset \mathbb{N}$ is finite with $|F|\geq 2$, and  $\sigma= \dim_{\mathcal{H}}(J_{F})$, then  there exists a $C_F>1$ such that for all $n \in \mathbb{N}\setminus F$ we have that 
		\begin{equation}\label{Acupn} \sigma + C_F^{-1} n^{-2\sigma} \leq \dim_{\mathcal{H}}(J_{F\cup\{n\}}) \leq \sigma + C_F n^{-2\sigma}. 
		\end{equation} 
		Moreover, if $|F|=1$, then $\lim_{n\to\infty} \dim_{\mathcal{H}}(J_{F\cup\{n\}}) =0$.
		\end{lemma}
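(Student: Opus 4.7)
The plan is to combine the Perron--Frobenius characterization of the Hausdorff dimension (Theorem~\ref{thm:FN}(v)) with a first-order Taylor expansion of $L_{s,F}$ at $s=\sigma$, then apply Lemma~\ref{basic} to $L_{s,F\cup\{n\}}$ with test function $v_{\sigma,F}$. Since $s\mapsto\lambda_{s,F\cup\{n\}}$ is strictly decreasing and continuous and equals $1$ precisely at $s=\tau:=\dim_{\mathcal{H}}(J_{F\cup\{n\}})$, determining the sign of $\lambda_{\sigma+\eta,F\cup\{n\}}-1$ for $\eta=Kn^{-2\sigma}$ locates $\tau$ inside the target window $[\sigma+C_F^{-1}n^{-2\sigma},\sigma+C_Fn^{-2\sigma}]$.

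First I split $L_{s,F\cup\{n\}}v_{\sigma,F}=L_{s,F}v_{\sigma,F}+L_{s,\{n\}}v_{\sigma,F}$. The perturbation $L_{s,\{n\}}v_{\sigma,F}(x)=(n+x)^{-2s}v_{\sigma,F}(1/(n+x))$ is of exact order $n^{-2s}$ uniformly in $x$, since $v_{\sigma,F}$ is strictly positive and continuous on $[0,1]$ (Theorem~\ref{thm:FN}(i)). For the $F$-part, using $L_{\sigma,F}v_{\sigma,F}=v_{\sigma,F}$ and Taylor expanding in $\eta:=s-\sigma$ gives
\[
L_{\sigma+\eta,F}v_{\sigma,F}(x) \;=\; v_{\sigma,F}(x) - 2\eta\, D(x) + O(\eta^2),
\]
where $D(x):=\sum_{m\in F}\log(m+x)\,(m+x)^{-2\sigma}\,v_{\sigma,F}(1/(m+x))$ and the $O(\eta^2)$ remainder is uniform in $x\in[0,1]$ because $F$ is finite.

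The crucial step, where the hypothesis $|F|\geq 2$ enters, is the pointwise estimate $D(x)\geq c_F\,v_{\sigma,F}(x)$ on $[0,1]$ for a constant $c_F>0$ depending only on $F$. Each summand in $D$ is nonnegative and vanishes exactly at $(m,x)=(1,0)$: for $x>0$ every summand is strictly positive, while at $x=0$ the hypothesis $|F|\geq 2$ guarantees some $m\in F$ with $m\geq 2$, contributing $\log m\cdot m^{-2\sigma}v_{\sigma,F}(1/m)>0$. Continuity and compactness then yield $c_F:=\inf_x D(x)/v_{\sigma,F}(x)>0$, and likewise $C_F^{0}:=\sup_x D(x)/v_{\sigma,F}(x)<\infty$. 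Plugging $\eta=Kn^{-2\sigma}$, and using $\sigma>0$ (so $(n+1)^{-2\eta}\to 1$, as $n^{-2\sigma}\log n\to 0$), one obtains
\[
\bigl(1+n^{-2\sigma}[\gamma-2KC_F^{0}+o(1)]\bigr)v_{\sigma,F} \;\leq\; L_{\sigma+\eta,F\cup\{n\}}v_{\sigma,F} \;\leq\; \bigl(1+n^{-2\sigma}[\Gamma-2Kc_F+o(1)]\bigr)v_{\sigma,F},
\]
for constants $\gamma,\Gamma>0$ depending only on $F$. Choosing $K$ large makes the upper bound less than $1$, so Lemma~\ref{basic} combined with Theorem~\ref{thm:FN}(iii) gives $\tau\leq\sigma+Kn^{-2\sigma}$; choosing $K$ small makes the lower bound exceed $1$, giving $\tau\geq\sigma+Kn^{-2\sigma}$. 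This proves \eqref{Acupn} for all $n$ beyond a threshold $N_0(F)$; the finitely many remaining $n\in\mathbb{N}\setminus F$ are absorbed into $C_F$ since each contributes a strictly positive gap $\tau-\sigma$.

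For $|F|=1$, write $F=\{a\}$; then $J_{\{a\}}$ is a single point, so $\sigma=0$. For any fixed $s>0$, Theorem~\ref{thm:FN}(iii,v) gives $\lambda_{s,\{a\}}<1$, while $\|L_{s,\{n\}}v_{s,\{a\}}\|_\infty\to 0$ as $n\to\infty$. Consequently $L_{s,F\cup\{n\}}v_{s,\{a\}}\leq\mu\,v_{s,\{a\}}$ for some $\mu<1$ and all sufficiently large $n$, and Lemma~\ref{basic} forces $\dim_{\mathcal{H}}(J_{F\cup\{n\}})<s$; letting $s\to 0^+$ concludes. The main obstacle throughout is the uniform strict-positivity estimate $D\geq c_F v_{\sigma,F}$, which is exactly where the hypothesis $|F|\geq 2$ is exploited; once this constant is in hand, the remaining steps are elementary manipulations.
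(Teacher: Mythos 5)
Your proof is correct but follows a genuinely different route from the paper's. The paper obtains the upper bound by combining the distortion estimate $v_s(\tfrac{1}{n+x})\leq e^{2s}v_s(x)$ with the contraction bound $\lambda_s = r(L_{s,F}) \leq 2^{-(s-\sigma)}$ (proved via the squared operator $L_{s,F}^2$ and the factor-$4$ derivative bound), and obtains the lower bound by a separate argument relying on the convexity of $s\mapsto\ln r(L_{s,F\cup\{n\}})$, locating the zero of $\ln\mu_s$ between the straight line through $(0,\ln(|F|+1))$ and $(\sigma,\ln\mu_\sigma)$. You instead treat both bounds in a unified way via a first-order Taylor expansion of $s\mapsto L_{s,F}v_{\sigma,F}$ at $s=\sigma$, exploiting the derivative term $D(x)$ and the two-sided estimate $c_Fv_{\sigma,F}\leq D\leq C_F^0 v_{\sigma,F}$, plus a perturbation estimate for the extra branch $\theta_n$; Lemma~\ref{basic} with the test function $v_{\sigma,F}$ then converts the pointwise inequalities into spectral-radius bounds. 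What each approach buys: the paper's argument produces explicit numerical constants ($C_1 > e^3/\ln 2$, $C_2 = \sigma/(2e^4\ln(|F|+1))$), while yours gives only existential constants through compactness; on the other hand, your argument is more self-contained, as it avoids both the $L^2$-squaring trick and the convexity of $\ln\mu_s$ (which the paper has to cite from \cite{FN2}), and it handles upper and lower bound symmetrically by choosing $K$ large or small in $\eta = Kn^{-2\sigma}$. One small expository remark: you attribute the role of $|F|\geq 2$ solely to the positivity of $D$ at $x=0$, but it is used in a second place as well, namely to ensure $\sigma>0$ so that $\eta=Kn^{-2\sigma}\to 0$ and the Taylor expansion is meaningful (you do invoke $\sigma>0$ later, so this is a matter of emphasis rather than a gap).
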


The following  result can be found in \cite{MU1}. 
\begin{theorem}\label{approx} Let  $F\subseteq \mathbb{N}$,  with $|F|=\infty$. If $F_1\subset F_2\subset \ldots \subset F$ with each $F_n$ finite and $\cup_n F_n=F$, then 
\[
\lim_n \mathrm{dim}_{\mathcal{H}}(J_{F_n}) = \mathrm{dim}_{\mathcal{H}}(J_F).
\]
\end{theorem}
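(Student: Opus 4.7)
The plan is to combine monotonicity of Hausdorff dimension with the Mauldin--Urba\'nski characterization of $\dimh(J_F)$ for an infinite alphabet $F$ as a supremum over its finite subalphabets.

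Since $F_n\subseteq F_{n+1}\subseteq F$ implies $J_{F_n}\subseteq J_{F_{n+1}}\subseteq J_F$, the sequence $\sigma_n:=\dimh(J_{F_n})$ is non-decreasing and bounded above by $\dimh(J_F)$; write $\sigma_\infty=\lim_n\sigma_n\leq\dimh(J_F)$. For the reverse inequality, I would invoke from \cite{MU1} the identity
\[
\dimh(J_F)=\sup\{\dimh(J_G)\colon G\subseteq F\text{ finite}\}.
\]
Given any finite $G\subseteq F$, the finiteness of $G$ together with $F=\bigcup_n F_n$ being an increasing union yields some $N$ with $G\subseteq F_N$; hence $\dimh(J_G)\leq\sigma_N\leq\sigma_\infty$. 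Taking the supremum over such $G$ gives $\dimh(J_F)\leq\sigma_\infty$, which combined with the first inequality completes the proof.

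The substantive content lies in the supremum characterization. A natural route is via the Perron--Frobenius operators of Theorem~\ref{thm:FN}: for $s>0$ one has $L_{s,F_n}f\leq L_{s,F_{n+1}}f$ for every non-negative $f$, so substituting the Perron eigenvector of $L_{s,F_{n+1}}$ into Lemma~\ref{basic} yields monotonicity of the spectral radii $\lambda_{s,F_n}$. Suppose towards contradiction that $\sigma_\infty<\dimh(J_F)$ and fix $t\in(\sigma_\infty,\dimh(J_F))$. Then $\lambda_{t,F_n}<\lambda_{\sigma_n,F_n}=1$ for every $n$, and the main obstacle is to turn $\lim_n\lambda_{t,F_n}\leq 1$ into the bound $t\geq\dimh(J_F)$. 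This step requires developing Bowen's formula for the infinite system $F$ while carefully treating the possibly unbounded operator $L_{t,F}$, and is the heart of the argument in \cite{MU1}; once that infinite-alphabet pressure formula is available, the finite-subset bookkeeping above closes the argument.
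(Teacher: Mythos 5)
Your proof is correct and takes essentially the same approach as the paper: the paper gives no proof of this theorem and simply cites \cite{MU1}, and you reduce the claim to the equivalent supremum characterization $\dim_{\mathcal{H}}(J_F)=\sup\{\dim_{\mathcal{H}}(J_G)\colon G\subseteq F\ \text{finite}\}$ established there, with the correct and elementary observation that any finite $G\subseteq F=\bigcup_n F_n$ lies in some $F_N$. Your further sketch of the supremum identity via Perron--Frobenius spectral radii is incomplete, but you honestly flag that the missing ingredient is the infinite-alphabet Bowen formula from \cite{MU1}, which is exactly where the substance lies.
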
	

We will also need the following fact, see \cite[Proposition 2.7]{CLU2}. The same result can be found in \cite{N2} where different methods are used.
\begin{proposition}\label{prop:increasing}
If $A,B\subset \mathbb{N}$ and there exists a non-decreasing bijection $\tau\colon A\to B$, then \[\dim_{\mathcal{H}}(J_{B})\leq  \dim_{\mathcal{H}}(J_{A}).\]
\end{proposition}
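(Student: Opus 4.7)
The plan is to reduce to finite subsets via Theorem \ref{approx} and then compare the Perron-Frobenius operators $L_{s,A_n}$ and $L_{s,B_n}$ using the structural facts collected in Theorem \ref{thm:FN}. Enumerate $A=\{a_1<a_2<\cdots\}$ and put $A_n=\{a_1,\ldots,a_n\}$ and $B_n=\tau(A_n)\subseteq B$. The restriction $\tau|_{A_n}\colon A_n\to B_n$ is again a non-decreasing bijection (so $\tau(a)\geq a$ on $A_n$), and because $\tau$ is a bijection on all of $A$ we have $\bigcup_n B_n=B$. By Theorem \ref{approx} it therefore suffices to show $\dim_{\mathcal{H}}(J_{B_n})\leq\dim_{\mathcal{H}}(J_{A_n})$ for every $n$ and pass to the limit.

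Fix $n$, set $s=\dim_{\mathcal{H}}(J_{A_n})$, $\gamma=\min A_n\geq 1$, and let $v=v_{s,A_n}$ be the positive eigenvector from Theorem \ref{thm:FN}. By Theorem \ref{thm:FN}(v) we have $\lambda_{s,A_n}=1$, so $L_{s,A_n}v=v$. The central step is to verify that the auxiliary function $g\colon(0,1]\to(0,\infty)$ given by $g(t)=t^{2s}v(t)$ is non-decreasing. Computing the logarithmic derivative,
\[
\frac{g'(t)}{g(t)}=\frac{2s}{t}+\frac{v'(t)}{v(t)}\;\geq\;\frac{2s}{t}-\frac{2s}{\gamma}\;\geq\;0,
\]
where the first inequality is the derivative bound in Theorem \ref{thm:FN}(iv) and the second uses $t\leq 1\leq\gamma$. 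Note that every argument of $g$ to be considered is of the form $1/(a+x)$ with $a\geq\gamma$ and $x\in[0,1]$, and so lies in $(0,1]$.

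Given the monotonicity of $g$ the comparison is immediate: for $x\in[0,1]$ and $a\in A_n$, the hypothesis $\tau(a)\geq a$ gives $1/(a+x)\geq 1/(\tau(a)+x)$, with both values in $(0,1]$, so $g(1/(a+x))\geq g(1/(\tau(a)+x))$. Summing over $a\in A_n$,
\[
L_{s,A_n}v(x)=\sum_{a\in A_n}g\!\left(\tfrac{1}{a+x}\right)\;\geq\;\sum_{a\in A_n}g\!\left(\tfrac{1}{\tau(a)+x}\right)=L_{s,B_n}v(x),
\]
so $L_{s,B_n}v\leq v$. Since $v$ is strictly positive, Lemma \ref{basic} yields $r(L_{s,B_n})\leq 1$; combined with the strict monotonicity and continuity of $s\mapsto\lambda_{s,B_n}$ in Theorem \ref{thm:FN}(iii) and the characterisation $\lambda_{\dim_{\mathcal{H}}(J_{B_n}),B_n}=1$, this forces $\dim_{\mathcal{H}}(J_{B_n})\leq s=\dim_{\mathcal{H}}(J_{A_n})$, completing the finite case.

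I expect the monotonicity of $g$ to be the main obstacle: what makes the argument work is precisely that the derivative bound of Theorem \ref{thm:FN}(iv) takes the form $2s/\gamma$ with $\gamma\geq 1$ and that $v$ is only ever evaluated on $(0,1]$, so the two estimates in the chain above balance exactly. A weaker derivative bound would leave a positive gap and the comparison $L_{s,B_n}v\leq L_{s,A_n}v$ would fail.
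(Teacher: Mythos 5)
The paper itself does not prove this proposition: it cites \cite[Proposition 2.7]{CLU2} and mentions an alternative proof in \cite{N2} ``where different methods are used.'' So there is no in-paper argument to compare against, and what you have written is a self-contained proof built entirely from the machinery the paper does supply (Theorems \ref{thm:FN} and \ref{approx}, Lemma \ref{basic}). The argument is correct. Your central observation --- that $g(t)=t^{2s}v_{s,A_n}(t)$ is non-decreasing on $(0,1]$, precisely because the derivative bound $v'/v\geq -2s/\gamma$ in Theorem \ref{thm:FN}(iv) has $\gamma=\min A_n\geq 1$ while $v$ is only ever evaluated at points $\leq 1$ --- is exactly what makes the termwise comparison $g(1/(a+x))\geq g(1/(\tau(a)+x))$ go through, and the rest (sub-eigenvector inequality, Lemma \ref{basic}, strict monotonicity of $s\mapsto\lambda_{s,B_n}$) is standard. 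This is plausibly close in spirit to the Perron--Frobenius route in \cite{N2}, while \cite{CLU2} works with topological pressure; your version has the merit of staying inside the toolkit this paper already sets up.

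Two small points worth noting. First, the phrase ``non-decreasing bijection'' in the proposition must be read as $\tau(a)\geq a$ for all $a\in A$, not merely as $\tau$ being order-preserving --- a bijection between two subsets of $\mathbb{N}$ of the same cardinality can always be taken order-preserving, so the order-preserving reading would force equality of dimensions and is clearly not intended; your parenthetical ``(so $\tau(a)\geq a$)'' silently makes the right choice, and the paper's own application of the proposition confirms it, but it deserves a sentence. Second, the finite case $n=1$ has $|A_n|=1$, hence $s=\dim_{\mathcal{H}}(J_{A_1})=0$, and Theorem \ref{thm:FN}(iv) is stated only for $s>0$; in that degenerate case the desired inequality is just $0\leq 0$, so it does not affect the proof, but it should be noted as an excluded (trivial) case before fixing $n\geq 2$. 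Neither point is a real gap.
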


In our arguments we occasionally need explicit upper and lower bounds for $\mathrm{dim}_{\mathcal{H}}(J_A)$ for specific finite sets $A\subset \mathbb{N}$. To get these bounds we used the rigorous numerical methods developed by Falk and Nussbaum in \cite{FN1,FN2} and the Matlab code from

\begin{center}
 	\url{https://sites.math.rutgers.edu/~falk/hausdorff/codes.html}  
\end{center}

The table below lists the bounds that are sufficient for our purposes, which were obtained by running the Matlab code with number of intervals $N=200$. It should, however, be noted that much sharper bounds can be obtained by using the numerical methods from \cite{FN2,FN3}. In some cases, for instance $A=\{1,2\}$, very sharp estimates exist, see e.g.,  \cite{FN3} and \cite{JP}.

\begin{table}[!h] 
\centering
\caption{Upper and lower bounds for Hausdorff dimension} \label{table}
\begin{small}
\begin{tabular}{r|rc||r|r} \toprule
$\{1,2\}$  & [0.531277, 0.531281]  & \qquad & $\{ 1, 2^{10} \}$ & [0.150819, 0.150820]     \\
$\{1, 3\}$ & [0.454487, 0.454490]  & &$\{ 1, 2^{11} \}$ & [0.140914, 0.140915]   \\
$\{ 1, 2^2\}$ &[0.411181, 0.411184] & &$\{1, 2, 4\}$ & [0.669217, 0.669223]    \\
$\{1, 2^3\}$ & [0.333644, 0.333646] & &$\{ 1, 2^5, 3^5\}$ & [0.272593, 0.272595]\\
$\{1, 2^4\}$ & [0.280974, 0.280976] & &$\{ 1, 2^6, 3^6 \}$ & [0.238624, 0.238626]\\
$\{1,2^5\}$ &[0.243375, 0.243377] & &$\{ 1, 2^7,3^7 \}$ & [0.212932, 0.212933]   \\
$\{ 1, 2^6 \}$ & [0.215370, 0.215371] &  &$\{ 1, 2^8,3^8 \}$ & [0.192784, 0.192786] \\
$\{ 1, 2^7 \}$ & [0.193748, 0.193749]  & & $\{ 1, 2^9,3^9 \}$ & [0.176528, 0.176529]  \\
$\{ 1, 2^8 \}$ & [0.176544, 0.176545] & & $\{ 1, 2^{10},3^{10} \}$ & [0.163106, 0.163107]  \\
$\{ 1, 2^9 \}$ & [0.162508, 0.162510] & &$\{1,3^5,\ldots,100^5\}$ & [0.243455, 0.243456]\\
\bottomrule
	\end{tabular}
    \end{small} 
\end{table}

To prove Theorems \ref{thm2} and \ref{Mq} we will need to consider Perron-Frobenius operators $L_{s,F}$ where $|F|=\infty$. In that case some care needs to be taken, as $L_{s,F}$ may not be defined for all values of $s>0$. Indeed, if $F =\{a_1,a_2,\ldots\}\subseteq \mathbb{N}$ with $a_1<a_2<\ldots$, then $L_{s,F}\colon C([0,1])\to C([0,1])$ given by, 
\[
(L_{s,F} f)(x) = \sum_{n=1}^\infty \left(\frac{1}{a_n+x}\right)^{2s} f\left(\frac{1}{a_n+x}\right)\mbox{\quad for $x\in [0,1]$},
\] 
is a bounded linear operator for  $s>\sigma_0$, where $\sigma_0=\inf\{\sigma>0\colon \sum_{n=1}^\infty a_n^{-2\sigma}<\infty\}$. 
In the case where $F\subseteq P_q^*$ with $q\geq 2$ we have that $\sigma_0=0$, and for $F\subseteq M_q$ with $q\geq 1$ we have that $\sigma_0 \leq (2q)^{-1}$. 
In \cite[Section 5]{NPVL} the relation between the spectral radius $r(L_{s,F})$ and $\dimh(J_F)$ was investigated for $|F|=\infty$. In fact, the more general setting of iterated function systems was considered there. We will use some of the  results from \cite{NPVL}. 
\begin{lemma}\label{infradius}(\cite[Lemma 5.4]{NPVL})
If $F\subseteq \mathbb{N}$ with $|F|=\infty$, then $s\mapsto r(L_{s,F})$ is continuous and strictly decreasing for $s>\sigma_0$. 
\end{lemma}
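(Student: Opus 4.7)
Set $F_k = \{a_1, \ldots, a_k\}$ for each $k \in \mathbb{N}$. A direct tail estimate $\|L_{s,F_k} - L_{s,F}\|_{\mathrm{op}} \leq \sum_{n > k} a_n^{-2s}$ shows operator-norm convergence $L_{s, F_k} \to L_{s,F}$ as $k \to \infty$ for every fixed $s > \sigma_0$. By Theorem \ref{thm:FN}, each $\lambda_k(s) := r(L_{s, F_k})$ is continuous and strictly decreasing in $s$, with strictly positive Perron eigenvector $v_k := v_{s, F_k}$. After normalising $\|v_k\|_\infty = 1$, the bound \eqref{ineqv_s} gives uniform Lipschitz control of the $v_k$, with a constant depending only on $s$ and $a_1$. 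My plan is to realise $r(L_{s,F})$ as the pointwise limit of the $\lambda_k(s)$ and then transfer continuity and strict monotonicity from the finite to the infinite setting.

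The first step is to show $\lambda_k(s) \to r(L_{s,F})$ for each $s > \sigma_0$. Since $L_{s, F_k} \leq L_{s, F}$ as positive operators, $\lambda_k(s)$ is increasing in $k$ and bounded above by $r(L_{s,F})$, so $\lambda_k(s) \uparrow \ell(s) \leq r(L_{s,F})$. By Arzelà-Ascoli and the uniform Lipschitz bound, a subsequence $v_{k_j}$ converges uniformly to some $v_\ast \in C([0,1])$ with $\|v_\ast\|_\infty = 1$, and by \eqref{ineqv_s} also $v_\ast \geq e^{-2s/a_1} > 0$ throughout $[0,1]$. Passing to the limit in $L_{s, F_{k_j}} v_{k_j} = \lambda_{k_j}(s) v_{k_j}$, justified by combining the operator-norm convergence with uniform convergence of $v_{k_j}$, yields $L_{s,F} v_\ast = \ell(s) v_\ast$, and Lemma \ref{basic} applied to the strictly positive $v_\ast$ then forces $r(L_{s,F}) = \ell(s)$.

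For strict decrease, fix $\sigma_0 < s_1 < s_2$ and let $v$ denote the strictly positive eigenvector for $s_1$ produced above. Factor
\[
\bigl(\tfrac{1}{a_n+x}\bigr)^{2s_2} = \bigl(\tfrac{1}{a_n+x}\bigr)^{2(s_2-s_1)} \bigl(\tfrac{1}{a_n+x}\bigr)^{2s_1}.
\]
If $a_1 \geq 2$, the first factor is uniformly bounded by $2^{-2(s_2-s_1)} < 1$, hence $L_{s_2, F} v \leq 2^{-2(s_2-s_1)} r(L_{s_1, F}) v$ and Lemma \ref{basic} closes the case. If $a_1 = 1$, I would peel off the $n = 1$ term (for which the factor equals $1$ at $x = 0$) and apply the bound only to the tail: the residual $R(x) := \sum_{n \geq 2} (a_n+x)^{-2s_1} v(1/(a_n+x))$ is a strictly positive continuous function on $[0,1]$ and therefore satisfies $R(x) \geq \delta\, v(x)$ for some $\delta > 0$, yielding $L_{s_2, F} v \leq (r(L_{s_1, F}) - \delta') v$ for some $\delta' > 0$, and Lemma \ref{basic} applies again.

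Finally, continuity. The representation $r(L_{s, F}) = \sup_k \lambda_k(s)$ presents $s \mapsto r(L_{s, F})$ as a supremum of an increasing family of continuous functions, and is therefore lower semi-continuous on $(\sigma_0, \infty)$. Conversely, the elementary estimate $|\alpha^{2s} - \alpha^{2s'}| \leq 2|s - s'|\, \alpha^{2\min(s, s')} \log(1/\alpha)$ applied termwise, together with a dominated-convergence argument on the tails, shows $s \mapsto L_{s, F}$ is continuous in operator norm on $(\sigma_0, \infty)$; upper semi-continuity of the spectral radius under operator-norm perturbations then delivers the reverse inequality. The main technical obstacle is the limit passage from finite to infinite $F$, which requires both the operator-norm convergence $L_{s, F_k} \to L_{s, F}$ and the Arzelà-Ascoli compactness furnished by the $k$-independent Lipschitz constant in Theorem \ref{thm:FN}(iv).
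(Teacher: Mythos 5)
Your proof is correct and self-contained. The paper itself does not reprove this lemma; it is cited from \cite{NPVL} (Lemma 5.4 there), and the paper only indicates in passing that the argument in \cite{NPVL} handles the non-uniform contraction $|\theta_1'(0)|=1$ by working with $L^2_{s,F}$ and the second-iterate maps $\theta_a\circ\theta_b$. Your route is genuinely different in that one spot: instead of squaring the operator to gain uniform contractivity, you peel off the $n=1$ term, observe that it can only help (its multiplier $(1+x)^{-2(s_2-s_1)}$ is $\leq 1$), and extract the strict decrease from the tail $R(x)=\sum_{n\geq 2}(a_n+x)^{-2s_1}v(1/(a_n+x))$, which is continuous and strictly positive on $[0,1]$ and hence dominated below by $\delta v$. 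This keeps the whole argument at the level of a single application of $L_{s,F}$ and Lemma \ref{basic}, which is cleaner than the second-iterate detour and works uniformly for $a_1\geq 1$. The remaining ingredients — finite truncations $F_k$, the tail bound $\|L_{s,F}-L_{s,F_k}\|\leq\sum_{n>k}a_n^{-2s}$, the $k$-independent Lipschitz constant from \eqref{ineqv_s} feeding into Arzel\`a--Ascoli, passing to the limit in the eigenequation, pinning the spectral radius with Lemma \ref{basic} applied to the strictly positive limit $v_\ast$, and then combining lower semi-continuity (supremum of continuous $\lambda_k$) with upper semi-continuity (operator-norm continuity of $s\mapsto L_{s,F}$ plus $r=\inf_n\|\cdot^n\|^{1/n}$) — are standard and sound.

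One small remark: you could shorten the continuity step. Using the normalisation $\|v_{s,F_k}\|_\infty=1$ together with the lower bound $v_{s,F_k}\geq e^{-2s/a_1}$ from \eqref{ineqv_s}, the same computation you use to compare $L_{s,F}v_{s,F_k}$ with $L_{s,F_k}v_{s,F_k}$ yields the quantitative sandwich
\[
\lambda_k(s)\;\leq\; r(L_{s,F})\;\leq\;\lambda_k(s)+e^{2s/a_1}\sum_{n>k}a_n^{-2s},
\]
and on any compact subinterval of $(\sigma_0,\infty)$ the error term is uniformly small, so $\lambda_k\to r(L_{\cdot,F})$ locally uniformly; continuity then follows directly without invoking upper semi-continuity of the spectral radius. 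But the semi-continuity argument you gave is also correct.
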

For $F\subseteq \mathbb{N}$ with $|F|=\infty$ let $\sigma_\infty =\inf\{s>0\colon r(L_{s,F})<1\}$. 
\begin{theorem}\label{infdim}(\cite[Theorem 5.11]{NPVL})
If $F\subseteq \mathbb{N}$ with $|F|=\infty$, then $\dimh(J_F)= \sigma_\infty$.  
\end{theorem}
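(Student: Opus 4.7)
The plan is to approximate the infinite set $F=\{a_1<a_2<\ldots\}$ by its finite truncations $F_n=\{a_1,\ldots,a_n\}$ and transfer the characterization of Hausdorff dimension in the finite case (Theorem \ref{thm:FN}(v)) to the limit. Writing $d=\dimh(J_F)$ and $d_n=\dimh(J_{F_n})$, Theorem \ref{approx} gives $d_n\uparrow d$, and for each $n$ the value $d_n$ is the unique $s$ satisfying $r(L_{s,F_n})=1$. The task is to show that $d$ plays the analogous role for $r(L_{\cdot,F})$, namely $d=\sigma_\infty$.

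For the inequality $d\leq\sigma_\infty$, I would pick $s<d$, so that $d_n>s$ for all large $n$. The strict monotonicity in Theorem \ref{thm:FN}(iii) gives $r(L_{s,F_n})>r(L_{d_n,F_n})=1$. Since $L_{s,F_n}f\leq L_{s,F}f$ for every nonnegative $f\in C([0,1])$, applying Lemma \ref{basic} to $L_{s,F}$ evaluated on the strictly positive eigenvector $v_{s,F_n}$ of $L_{s,F_n}$ yields $r(L_{s,F_n})\leq r(L_{s,F})$. Hence $r(L_{s,F})>1$, so $s<\sigma_\infty$; letting $s\uparrow d$ gives $d\leq\sigma_\infty$.

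For the reverse inequality $\sigma_\infty\leq d$, fix $s>d$ (which certainly exceeds $\sigma_0$, so $L_{s,F}$ is well defined). Now $d_n<s$ for all $n$, whence $r(L_{s,F_n})<1$. The core step is to establish $r(L_{s,F})=\lim_n r(L_{s,F_n})$; the $\geq$ direction is the domination argument above, and the $\leq$ direction needs real work. I would normalize the eigenvectors so that $v_{s,F_n}(0)=1$. Theorem \ref{thm:FN}(iv) then makes the sequence uniformly Lipschitz on $[0,1]$ with a constant depending only on $s$ and $a_1$, and inequality (\ref{ineqv_s}) yields a uniform strictly positive pointwise lower bound. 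Arzelà-Ascoli supplies a subsequential uniform limit $v\in C([0,1])$ with $v(0)=1$ and $v>0$ everywhere. Because $s>\sigma_0$, the summable majorant $\sum_m a_m^{-2s}$ allows term-by-term passage to the limit in $L_{s,F_n}v_{s,F_n}=r(L_{s,F_n})v_{s,F_n}$, producing $L_{s,F}v=\rho v$ with $\rho=\lim_n r(L_{s,F_n})\leq 1$. Lemma \ref{basic} then upgrades this eigenvalue identity to $r(L_{s,F})=\rho\leq 1$. Finally, the strict monotonicity of $s\mapsto r(L_{s,F})$ from Lemma \ref{infradius} lets me insert an intermediate $s'\in(d,s)$ to obtain $r(L_{s,F})<r(L_{s',F})\leq 1$, so $s\geq\sigma_\infty$ for every $s>d$, giving $\sigma_\infty\leq d$.

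The main obstacle is the construction of the limit eigenvector $v$ for the infinite operator. The difficulty is not passing to a pointwise limit — that is dominated convergence — but ensuring the limit is \emph{strictly} positive, so that Lemma \ref{basic} converts the approximate spectral identities into $r(L_{s,F})=\rho$. The uniform Lipschitz control in Theorem \ref{thm:FN}(iv) together with the two-sided bound (\ref{ineqv_s}) is precisely tailored for this: the upper side gives equicontinuity for Arzelà-Ascoli, while the lower side prevents the normalized eigenvectors from collapsing to zero on any portion of $[0,1]$ in the limit.
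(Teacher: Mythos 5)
The paper does not prove this statement; it is cited directly as \cite[Theorem 5.11]{NPVL}, so there is no in-paper argument to compare against. Your truncation-plus-compactness approach is the natural route and, as far as one can tell, mirrors the framework of that reference. The individual steps --- domination to get $r(L_{s,F_n})\leq r(L_{s,F})$, the normalization $v_{s,F_n}(0)=1$ together with Theorem \ref{thm:FN}(iv) and inequality (\ref{ineqv_s}) supplying uniform Lipschitz control and a uniform strictly positive lower bound for Arzel\`a--Ascoli, the summable tail $\sum_m a_m^{-2s}$ justifying term-by-term passage to the limit in the eigenvalue identity, Lemma \ref{basic} converting the limiting eigen-pair to a statement about $r(L_{s,F})$, and Lemma \ref{infradius} upgrading $r(L_{s,F})\leq 1$ to $r(L_{s,F})<1$ by inserting an intermediate exponent --- are all sound and correctly deployed.

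There is one genuine gap. In the direction $\sigma_\infty\leq d$ you fix $s>d$ and assert parenthetically that it ``certainly exceeds $\sigma_0$.'' That is precisely the claim $\dimh(J_F)\geq\sigma_0$, which you cannot take for granted: since $\sigma_\infty\geq\sigma_0$ by construction, this inequality is part of what the theorem asserts, and if it failed there would be a nonempty interval $(d,\sigma_0]$ on which $L_{s,F}$ is undefined and your compactness argument has nothing to attach to. The fact is true and easy to supply here: for any $s<\sigma_0$ one has $\sum_{a\in F_n}(a+1)^{-2s}\to\infty$ as $n\to\infty$, so applying $L_{s,F_n}$ to the constant function $1$ and invoking Lemma \ref{basic} gives $r(L_{s,F_n})\geq\sum_{a\in F_n}(a+1)^{-2s}\to\infty$; hence $d_n>s$ for $n$ large, so $d\geq s$, and letting $s\uparrow\sigma_0$ yields $d\geq\sigma_0$. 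The same caveat is latent in your first direction (the domination $L_{s,F_n}f\leq L_{s,F}f$ presupposes $L_{s,F}$ exists, i.e.\ $s>\sigma_0$), though there it is harmless because $s\leq\sigma_0$ already gives $s\leq\sigma_\infty$ trivially. Inserting the short verification of $d\geq\sigma_0$ closes the gap.
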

The reason for  $\sigma_\infty$ to be defined in that way in \cite{NPVL} is due to the fact that for general iterated function systems there need not be an $s>\sigma_0$ for which $r(L_{s,F})=1$. This, however, will not be an issue here.  We should mention that although the derivative of the map $\theta_1\colon x\to (1+x)^{-1}$  satisfies $|\theta_1'(0)| =1$ the results from \cite[Section 5]{NPVL} can be used. Indeed, as explained in \cite[Example 5.12]{NPVL}, to prove the results mentioned above one can work with the operator $L^2_{s,F}$ and the maps $\theta_a\circ\theta_b$, where $\theta_a\colon x \mapsto (a+x)^{-1}$ for $a\in\mathbb{N}$, as they have the property that $|(\theta_a\circ\theta_b)'(x)|\leq 4^{-1}$ for all $x\in [0,1]$. 

\section{Strict break points}
The concept of a strict break point plays a central role in the analysis of the dimension spectrum. The idea goes back to the work by Kesseb\"ohmer and Zhu \cite[Theorem 2.2]{KZ}, and is also used in \cite{CLU2}. 
\begin{definition}
Let $A=\{a_1,a_2,\ldots\}\subseteq \mathbb{N}$ with $a_1<a_2<\ldots$. Given $F\subset A$ finite and $0<s<\mathrm{dim}_{\mathcal{H}}(J_A)$, we say that $a_k\in A$ is a {\em break point for $(F,s)$} if $a_k>\max F$ and 
\[
\mathrm{dim}_{\mathcal{H}}(J_F)< s \leq \mathrm{dim}_{\mathcal{H}}(J_{F\cup\{a_k\}}).
\]
If $(F,s)$ has a break point, then by Lemma \ref{Fcupb} there exists a break point $a_{k_0}\in A$ such that $\dimh(J_{F\cup\{a_{k_0}\}})\geq s$ and $\dimh(J_{F\cup\{a_{k_0+1}\}})<s$, which is called a {\em strict break point} for $(F,s)$. Note that if $a_{k_0}$ is a strict break point for $(F,s)$, then $k_0\geq 2$. 
\end{definition}

Strict break points can be used to show that an $s\in (0,1)$ is in the dimension spectrum of $A$.
\begin{lemma}\label{break point seq}
Let $A\subseteq \mathbb{N}$ be infinite and $F_1\subset F_2\subset \ldots \subset A$  be a nested sequences of finite subsets with $\max F_n<\max F_{n+1}$ for all $n\geq 1$. 
If $0<s<\dimh(J_A)$ and for each $n$ there exists a strict break point $a_{m_n}$ for $(F_n,s)$, then $s\in \mathrm{DS}(A)$. 
\end{lemma}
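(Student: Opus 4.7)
The plan is to produce $B\subseteq A$ with $\dim_{\mathcal{H}}(J_B)=s$, establishing $s\in\mathrm{DS}(A)$. I build $B$ as an increasing union $B=\bigcup_{k} B_k$ of finite subsets of $A$ chosen so that $\dim_{\mathcal{H}}(J_{B_k})<s$ for every $k$ while $\dim_{\mathcal{H}}(J_{B_k})\to s$; Theorem~\ref{approx} then gives $\dim_{\mathcal{H}}(J_B)=s$.

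Before the induction, I would pass to a subsequence of $(F_n,a_{m_n})$, relabelled again as $(F_n,a_{m_n})$, along which the break points $a_{m_n}$ grow so rapidly that the dimension increments predicted by Lemma~\ref{Fcupb} are summable and dominated by the gaps $s-\dim_{\mathcal{H}}(J_{F_n\cup\{a_{m_n+1}\}})$ that appear below. This thinning is legitimate because $a_{m_n}>\max F_n\to\infty$, so we have complete freedom in how sparsely we sample. Set $B_1=F_1\cup\{a_{m_1+1}\}$; its dimension is strictly less than $s$ by the strict break point property. Inductively, given $B_k$ with $F_k\subseteq B_k$ and $\dim_{\mathcal{H}}(J_{B_k})<s$, define $B_{k+1}=B_k\cup F_{k+1}\cup\{a_{m_{k+1}+1}\}$. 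Starting from $F_{k+1}\cup\{a_{m_{k+1}+1}\}$, whose dimension is $<s$ by the break point hypothesis, one successively adjoins the at most $k$ extra elements of $B_k\setminus F_{k+1}$, all of the form $a_{m_j+1}$ with $j\leq k$ and hence large by the thinning; by Lemma~\ref{Fcupb} each adjoined element $n$ increases the dimension by at most $C\,n^{-2\sigma}$, so the cumulative increment stays below the gap $s-\dim_{\mathcal{H}}(J_{F_{k+1}\cup\{a_{m_{k+1}+1}\}})$. Hence $\dim_{\mathcal{H}}(J_{B_{k+1}})<s$ and $B_k\subseteq B_{k+1}$.

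For convergence, the containment $F_k\subseteq B_k$ yields the squeeze
\[
\dim_{\mathcal{H}}(J_{F_k})\leq\dim_{\mathcal{H}}(J_{B_k})<s\leq\dim_{\mathcal{H}}(J_{F_k\cup\{a_{m_k}\}}),
\]
and the upper estimate of Lemma~\ref{Fcupb} gives $s-\dim_{\mathcal{H}}(J_{F_k})\leq C_{F_k}\,(a_{m_k})^{-2\dim_{\mathcal{H}}(J_{F_k})}$; the right-hand side tends to $0$ by the initial thinning, forcing $\dim_{\mathcal{H}}(J_{B_k})\to s$, as required.

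The main obstacle is coordinating, along one and the same subsequence, two requirements: that the constants $C_{F_k}$ arising in Lemma~\ref{Fcupb} are overwhelmed by $(a_{m_k})^{2\dim_{\mathcal{H}}(J_{F_k})}$ (which forces $\dim_{\mathcal{H}}(J_{F_k})\to s$), and simultaneously that the finitely many extra elements of $B_{k-1}\setminus F_k$ fit inside the shrinking gaps at each inductive step. Both reduce to choosing a sufficiently sparse subsequence, which is possible because $a_{m_n}\to\infty$; the rest is careful bookkeeping of the Lemma~\ref{Fcupb} estimates.
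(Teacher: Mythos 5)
There is a genuine gap: the step you call "forcing $\dim_{\mathcal{H}}(J_{B_k})\to s$" is circular, and thinning cannot repair it. Your squeeze gives
\[
0\;\leq\; s-\dim_{\mathcal{H}}(J_{B_k})\;\leq\; s-\dim_{\mathcal{H}}(J_{F_k})\;\leq\; C_{F_k}\,(a_{m_k})^{-2\dim_{\mathcal{H}}(J_{F_k})},
\]
and you assert the right-hand side tends to $0$ after passing to a sparse subsequence. But Theorem~\ref{approx} already identifies $\lim_k\dim_{\mathcal{H}}(J_{F_k})=\sigma:=\dim_{\mathcal{H}}(J_{\cup_n F_n})$, so the middle quantity $s-\dim_{\mathcal{H}}(J_{F_k})$ converges to $s-\sigma$ along the \emph{whole} sequence, and hence along every subsequence. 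Consequently $C_{F_k}(a_{m_k})^{-2\dim_{\mathcal{H}}(J_{F_k})}\geq s-\dim_{\mathcal{H}}(J_{F_k})\to s-\sigma$. If $\sigma<s$ this is bounded away from $0$, so no amount of thinning can make it tend to $0$; and if $\sigma=s$, you are already done with $B=\bigcup_n F_n$ and the entire auxiliary construction of $B_k\supsetneq F_k$ is superfluous. In other words, the claim you invoke is equivalent to the lemma's conclusion ($\sigma=s$), which is exactly what remains to be proved. Thinning only controls the growth of the chosen $a_{m_{n_j}}$; it has no leverage over the constants $C_{F_{n_j}}$ (which Lemma~\ref{Fcupb} does not assert to be uniform in $F$) nor over the gaps $s-\dim_{\mathcal{H}}(J_{F_{n_j}\cup\{a_{m_{n_j}}+1\}})$, which are fixed by the data; so the ``coordination'' you flag at the end is not a bookkeeping issue but an unfilled hole.

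The paper avoids all of this by taking $B=F_\infty=\bigcup_n F_n$ directly and ruling out $\sigma<s$ by contradiction: using the squared transfer operator and the derivative bound (\ref{1/4}) it obtains the explicit estimate $1\leq r(L_{s,F_n\cup\{a_{m_n}\}})\leq 2^{-(s-\sigma)}+a_{m_n}^{-2s}e^{2s}$, which fails for large $n$ because $a_{m_n}\to\infty$. That argument needs only the strict break points $a_{m_n}$ and explicit operator inequalities, not the $F$-dependent constants of Lemma~\ref{Fcupb}, which is precisely why it goes through where your version stalls. If you want a proof in your spirit, the move you need is to first show $\sigma<s$ is impossible — at which point $B=F_\infty$ finishes the job and no additional elements $a_{m_j+1}$ need to be adjoined.
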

\begin{proof}
Let $\sigma_n = \dim_{\mathcal{H}}(J_{F_n})<s$ for $n\geq 1$, and let $\sigma =\dimh(J_{F_\infty})$, where $F_\infty=\cup_n F_n$. From Theorem \ref{approx} we know that  $\sigma_n\to \sigma$ as $n\to\infty$, and $\sigma\leq s$, as $\sigma_n<s$ for all $n$. To complete the proof we  show that $\sigma =s$. Suppose, by way of contradiction, that $\sigma <s$. 

For $n\geq 1$, let $G_n =F_n\cup\{a_{m_n}\}$, so $\dim_{\mathcal{H}}(J_{G_n})\geq s$ for each $n$. For $a,b\in \mathbb{N}$, the maps $\theta_a\colon x\mapsto \frac{1}{a+x}$ and $\theta_b\colon x\mapsto \frac{1}{b+x}$ satisfy
\[
(\theta_a\circ\theta_b)'(x) = (a(b+x)+1)^{-2}\mbox{\quad for $x\in [0,1]$.}
\]
So, 
\begin{equation}\label{1/4}
 \left((\theta_a\circ\theta_b)'(x)\right)^{s-\sigma_n} =  (a(b+x)+1)^{-2(s-\sigma_n)}\leq 2^{-2(s-\sigma)} = 4^{-(s-\sigma)}.
 \end{equation}
We know, see for instance \cite[Lemma 3.4]{NPVL}, that 
\[
(L^2_{s,F_n}f)(x) = \sum_{a,b\in F_n} ((\theta_a\circ\theta_b)'(x))^sf((\theta_a\circ\theta_b)(x))\mbox{\quad for $f\in C([0,1])$. }
\]

Now let $v_n\in C([0,1])$ be the strictly positive eigenvector of $L_{\sigma_n,F_n}$ with $L_{\sigma_n,F_n}v_n =v_n$. Then 
\begin{eqnarray*}
(L^2_{s,F_n}v_n)(x) & = & \sum_{a,b\in F_n} ((\theta_a\circ\theta_b)'(x))^sv_n((\theta_a\circ\theta_b)(x))\\
  & \leq & 4^{-(s-\sigma)}\sum_{a,b\in F_n} ((\theta_a\circ\theta_b)'(x))^{\sigma_n}v_n((\theta_a\circ\theta_b)(x))\\
   & = & 4^{-(s-\sigma)}L^2_{\sigma_n,F_n}v_n(x)\\
   & = & 4^{-(s-\sigma)}v_n(x),
\end{eqnarray*} 
hence $r(L^2_{s,F_n}) \leq 4^{-(s-\sigma)}$ by Lemma \ref{basic}. As $r(L_{s,F_n}) = \lim_k \|L_{s,F_n}^k\|^{1/k} $, we find that 
\begin{equation}\label{est1}
r(L_{s,F_n})= \lim_k  \left(\|L_{s,F_n}^{2k}\|^{1/k}\right)^{1/2} = r(L_{s,F_n}^2)^{1/2} \leq 2^{-(s-\sigma)}.
\end{equation}

We know from Theorem \ref{thm:FN} that there exists a strictly positive function $w_s\in C([0,1])$ such that $L_{s,F_n}w_s = r(L_{s,F_n})w_s$. Now using (\ref{est1}) and (\ref{ineqv_s}) we get that 
\begin{eqnarray*}
(L_{s,G_n}w_s)(x) & = & (L_{s,F_n}w_s)(x) +\left(\frac{1}{a_{m_n}+x}\right)^{2s}w_s\left(\frac{1}{a_{m_n}+x}\right)\\
  & \leq & 2^{-(s-\sigma)} w_s(x) +  \left(\frac{1}{a_{m_n}}\right)^{2s}e^{2s}w_s(x),
\end{eqnarray*}
hence $r(L_{s,G_n})\leq 2^{-(s-\sigma)}  + a_{m_n}^{-2s}e^{2s}$. As $\dim_\mathcal{H}(J_{G_n})\geq s$, we know that $r(L_{s,G_n})\geq 1$, which gives 
\[
1\leq r(L_{s,G_n})\leq 2^{-(s-\sigma)}  + a_{m_n}^{-2s}e^{2s}
\]
for $n\geq 1$. This is impossible, since $a_{m_n}\to\infty$ and $s-\sigma>0$. 
\end{proof}
The following lemma is similar to \cite[Theorem 2.2]{KZ}.
\begin{lemma}\label{break point}
Suppose $A\subseteq \mathbb{N}$ is infinite and $0<s<\mathrm{dim}_{\mathcal{H}}(J_A)$. If for each $F\subset A$ finite with strict break point $a_{k_0}\in A$ for $(F,s)$ we have that $ s < \mathrm{dim}_{\mathcal{H}}(J_{F\cup T})$, where $T=\{a_n\in A\colon n>k_0\}$,
then $s\in \mathrm{DS}(A)$.
\end{lemma}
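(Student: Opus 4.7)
The plan is to build, by induction, a nested sequence $F_1\subset F_2\subset\ldots$ of finite subsets of $A$ with $\max F_n<\max F_{n+1}$, each $(F_n,s)$ admitting a strict break point; the conclusion then follows immediately from Lemma \ref{break point seq}. The hypothesis is precisely what drives the iteration: once a strict break point $a_{k_n}$ for $(F_n,s)$ has been located, it guarantees that the tail set $F_n\cup T_n$ has dimension strictly above $s$, giving room to enlarge $F_n$ inside $A$ while keeping the dimension below $s$.

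For the base case, I would combine $\dimh(J_A)>s$ with Theorem \ref{approx} to obtain a finite set $G\subset A$ with $\dimh(J_G)\geq s$, then strip off its elements one at a time in decreasing order of $A$-index, taking $F_1$ to be the first proper subset whose dimension drops below $s$. The element just removed is larger than $\max F_1$ and is a break point for $(F_1,s)$. Since $\dimh(J_{F_1\cup\{a_k\}})\to \dimh(J_{F_1})<s$ as $k\to\infty$ (by Lemma \ref{Fcupb} when $|F_1|\geq 2$, or by the singleton statement when $|F_1|=1$), the largest index $k$ with $\dimh(J_{F_1\cup\{a_k\}})\geq s$ exists and yields a strict break point $a_{k_1}$.

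For the inductive step, suppose $F_n$ with strict break point $a_{k_n}$ has been produced, and write $T_n=\{a_j\in A\colon j>k_n\}$. By hypothesis $\dimh(J_{F_n\cup T_n})>s$. Consider the increasing sets
\[
H^{(j)}=F_n\cup\{a_{k_n+1},a_{k_n+2},\ldots,a_{k_n+j}\},\qquad j\geq 1,
\]
which exhaust $F_n\cup T_n$. The strict break point condition forces $\dimh(J_{H^{(1)}})<s$, while Theorem \ref{approx} gives $\dimh(J_{H^{(j)}})\to \dimh(J_{F_n\cup T_n})>s$; hence there is a least $j^*\geq 2$ with $\dimh(J_{H^{(j^*)}})\geq s$. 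Setting $F_{n+1}=H^{(j^*-1)}$ yields $\dimh(J_{F_{n+1}})<s$ together with the break point $a_{k_n+j^*}$ for $(F_{n+1},s)$, and applying Lemma \ref{Fcupb} exactly as in the base case, the largest qualifying index produces a strict break point $a_{k_{n+1}}$ for $(F_{n+1},s)$. By construction $F_n\subsetneq F_{n+1}$, and $\max F_{n+1}\geq a_{k_n+1}>a_{k_n}>\max F_n$.

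The main technical subtlety is ensuring strict, rather than merely ordinary, break points are available at every stage; this is handled uniformly by exploiting the tail decay $\dimh(J_{F\cup\{a_k\}})\to\dimh(J_F)$ from Lemma \ref{Fcupb} together with the monotonicity of $k\mapsto\dimh(J_{F\cup\{a_k\}})$ (a consequence of Proposition \ref{prop:increasing}), which together force the supremum of break-point indices to be attained. Once the sequence $(F_n)$ is in hand, Lemma \ref{break point seq} delivers $s\in\mathrm{DS}(A)$.
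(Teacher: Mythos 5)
Your proof is correct and follows essentially the same strategy as the paper: construct a nested sequence $F_1\subset F_2\subset\cdots$ of finite subsets with $\max F_n<\max F_{n+1}$, each admitting a strict break point, and invoke Lemma \ref{break point seq}. The only cosmetic difference is in the base case, where the paper takes $F_1$ to be the first initial segment $\{a_1,\ldots,a_{k_1}\}$ of $A$ whose dimension still lies below $s$ (using Theorem \ref{approx} directly on initial segments), whereas you obtain $F_1$ by stripping elements from a finite $G$ with $\dimh(J_G)\geq s$; both are valid and lead to the same inductive step.
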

\begin{proof} Let $A=\{a_1,a_2,\ldots\}\subseteq\mathbb{N}$ with $a_1<a_2<\ldots$. 
As $0<s<\dim_{\mathcal{H}}(J_A)$, it follows from Theorem \ref{approx} that there exists a $k_1\geq 1$ such that $F_1=\{a_1,\ldots,a_{k_1}\}$ satisfies 
\[
\dim_{\mathcal{H}}(J_{F_1})<s\mbox{\quad and \quad }\dim_{\mathcal{H}}(J_{F_1\cup\{a_{k_1+1}\}})\geq s.
\]
Now let $m_1\geq k_1+1$ be such that $a_{m_1}$ is a strict break point for $(F_1,s)$. It follows from the assumption that $\dim_{\mathcal{H}}(J_{F_1\cup T_1})\geq s$, where $T_1=\{a_k\in A\colon k>m_1\}$. 
In that case we can use  Theorem \ref{approx} again and find a  $k_2>m_1$ such that $F_2=F_1\cup\{a_{m_1+1},\ldots,a_{k_2}\}$ satisfies 
\[
\dim_{\mathcal{H}}(J_{F_2})<s\mbox{\quad and \quad }\dim_{\mathcal{H}}(J_{F_2\cup\{a_{k_2+1}\}})\geq s.
\]
Now let $m_2\geq k_2+1$ be such that $a_{m_2}$ is a strict break point for $(F_2,s)$. Thus, $\dim_{\mathcal{H}}(J_{F_2\cup T_2})\geq s$, where $T_2=\{a_k\in A\colon k>m_2\}$ by the assumption. 

Repeating this process, we find a nested sequence $F_1\subset F_2\subset \ldots \subset A$ of finite subsets, with $\max F_n<\max F_{n+1}$ for all $n$, and indices $m_1<m_2<\ldots$ such that $a_{m_n}\in A$ is a strict break point for $(F_n,s)$  for all $n$. It now follows from Lemma \ref{break point seq} that $s\in \mathrm{DS}(A)$.
\end{proof}

We will also need a general criterion to identify gaps in the dimension spectrum. This criterion is similar to the one given by Kesseb\"ohmer and Zhu in \cite[Theorem 2.4]{KZ}.  For completeness we include a proof of the statement we will need for our purposes.  To formulate it, we introduce some notation.  

Let $A=\{a_1,a_2,\ldots\}\subseteq \mathbb{N}$, with $a_1<a_2<\ldots$, $I_k =\{a_1,\ldots,a_k\}$, and  $T_k =\{a_{k+1},a_{k+2},\ldots\}$ for $k\geq 1$. Denote $\alpha_k = \dim_{\mathcal{H}}(J_{I_{k-1}\cup T_k})=\dim_{\mathcal{H}}(J_{A\setminus\{a_k\}})$ and $\beta_k = \dim_{\mathcal{H}}(J_{I_k})$ for $k\geq 1$. Here $I_0=\emptyset$. Given $F\subset A$ finite, we write 
\begin{equation}\label{Fhash}
F^\sharp = (F\setminus\max F) \cup\{a_n\in A\colon a_n>\max F\}.
\end{equation}

\begin{lemma}\label{nwdense} If $\alpha_k<\beta_k$ for some $k\geq 2$, and for each finite $F\subset A$ with $\beta_k <\dim_{\mathcal{H}}(J_F)<\alpha_{k+1}$ we have that 
\[
\dim_{\mathcal{H}}(J_{F^\sharp})<\dim_{\mathcal{H}}(J_F),
\]
then $\mathrm{DS}(A)$ is nowhere dense in $(\beta_k,\alpha_{k+1})$. 
\end{lemma}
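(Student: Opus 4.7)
The plan is to argue by contradiction. Suppose $\mathrm{DS}(A)$ fails to be nowhere dense in $(\beta^k,\alpha^{k+1})$, so there is an open sub-interval $V=(p,q)\subseteq(\beta^k,\alpha^{k+1})$ contained in $\overline{\mathrm{DS}(A)}$. Since Theorem~\ref{approx} expresses every $\dim_{\mathcal{H}}(J_B)$ with $B$ infinite as an increasing limit of dimensions of finite subsets of $B$, the closure $\overline{\mathrm{DS}(A)}$ coincides with the closure of the countable set $\{\dim_{\mathcal{H}}(J_F):F\subset A \text{ finite}\}$; consequently the finite-subset dimensions are themselves dense in $V$.

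Next I would fix a target $t^*\in V$ together with subsets $B_n\subseteq A$ and finite truncations $F_n=B_n\cap\{a_1,\ldots,a_{N_n}\}$ with $N_n$ chosen large enough that Theorem~\ref{approx} ensures $\dim_{\mathcal{H}}(J_{F_n})\to t^*$, $\dim_{\mathcal{H}}(J_{F_n})\in V$, $\max F_n\to\infty$, and (by the construction) $B_n\subseteq F_n\cup A_{>\max F_n}=F_n^\sharp\cup\{\max F_n\}$. The hypothesis then yields
\[
\dim_{\mathcal{H}}(J_{F_n^\sharp})<\dim_{\mathcal{H}}(J_{F_n})\leq\dim_{\mathcal{H}}(J_{B_n})\leq\dim_{\mathcal{H}}(J_{F_n^\sharp\cup\{\max F_n\}}),
\]
and a Lemma~\ref{Fcupb}-type estimate controlling the effect of adjoining the single element $\max F_n$, combined with Theorem~\ref{approx} applied to the increasing finite truncations of $F_n^\sharp$, should show that both outer endpoints of this chain converge to $t^*$.

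To close the argument I would quantitatively compare $\dim_{\mathcal{H}}(J_{F_n})$ with $\dim_{\mathcal{H}}(J_{F_n^\sharp})$ using Perron--Frobenius operators. At $s_n=\dim_{\mathcal{H}}(J_{F_n^\sharp})$, Theorems~\ref{thm:FN} and~\ref{infdim} provide a strictly positive eigenvector $v_n$ of $L_{s_n,F_n^\sharp}$ with eigenvalue $1$. Applying $L_{s_n,F_n}$ to $v_n$, bounding the single extra term at $\max F_n$ via the eigenvector estimate~(\ref{ineqv_s}), and invoking Lemma~\ref{basic}, should convert the qualitative strict inequality of the hypothesis into a quantitative lower bound on $\dim_{\mathcal{H}}(J_{F_n})-\dim_{\mathcal{H}}(J_{F_n^\sharp})$ that persists in the limit $\max F_n\to\infty$, contradicting the $t^*$-accumulation that density would demand.

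The hard part will be precisely this uniform-gap step. The hypothesis supplies only a pointwise, qualitative strict inequality for each $F_n$, whereas density forces ever finer accumulation of $\mathrm{DS}(A)$-values near $t^*$; extracting the contradiction requires pitting the contribution of the single element $\max F_n$ against that of the entire tail $A_{>\max F_n}$ uniformly along the sequence. Controlling this trade-off via the spectral data of $L_{s,F_n}$ and $L_{s,F_n^\sharp}$, uniformly in $n$, is where the bulk of the technical work is expected to lie.
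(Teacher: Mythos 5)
Your proposal identifies the right objects ($F$ and $F^\sharp$) and correctly reduces, via Theorem~\ref{approx}, to working with finite subsets, but the strategy you adopt to extract a contradiction cannot be closed as described, and the difficulty you flag at the end is real and fatal for that route. You try to convert the \emph{qualitative} strict inequality $\dim_{\mathcal{H}}(J_{F^\sharp})<\dim_{\mathcal{H}}(J_F)$ into a \emph{uniform quantitative} lower bound on the gap along a sequence $F_n$ with $\max F_n\to\infty$. No such uniform bound exists or is needed: as $\max F$ grows, the two dimensions typically come together (this is exactly what Lemma~\ref{Fcupb} quantifies), so pitting the single element $\max F_n$ against the tail via Perron--Frobenius estimates will never yield a uniform positive gap from the hypothesis alone. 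Recognizing that this step is ``hard'' is the correct instinct --- it is in fact impossible by that method.

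The idea you are missing is that the conclusion does not require comparing anything uniformly across $n$; it requires showing, for \emph{each fixed} finite $F$ with $\dim_{\mathcal{H}}(J_F)\in(\beta^k,\alpha^{k+1})$, that the open interval $\bigl(\dim_{\mathcal{H}}(J_{F^\sharp}),\dim_{\mathcal{H}}(J_F)\bigr)$ contains \emph{no} $\dim_{\mathcal{H}}(J_G)$ with $G\subset A$ finite. This is a purely combinatorial claim, proved by looking at the smallest index $a_q$ on which $F$ and $G$ disagree, and observing first that $\alpha^k<\beta^k$ forces $I_k\subset F$ and $I_k\subset G$ (so $q>k$), then splitting into cases: $a_q=\max F$ gives $G\subseteq F^\sharp$; $a_q>\max F$ gives $F\subset G$; $a_q<\max F$ with $a_q\in F$ lets you pass to the truncation $F_*=F\cap\{a_1,\ldots,a_q\}$ and apply the hypothesis to $F_*$; and $a_q<\max F$ with $a_q\in G$ is handled symmetrically with $G_*=G\cap\{a_1,\ldots,a_q\}$. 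In each case $\dim_{\mathcal{H}}(J_G)$ lands at or below $\dim_{\mathcal{H}}(J_{F^\sharp})$ or at or above $\dim_{\mathcal{H}}(J_F)$, never strictly in between. Once this claim is established, nowhere density follows immediately: given any open $I\subseteq(\beta^k,\alpha^{k+1})$, if $\mathrm{DS}(A)\cap I\neq\emptyset$ then by Theorem~\ref{approx} there is a finite $F$ with $\dim_{\mathcal{H}}(J_F)\in I$, and the sub-interval $I_0=\bigl(\dim_{\mathcal{H}}(J_{F^\sharp}),\dim_{\mathcal{H}}(J_F)\bigr)\cap I$ is disjoint from $\mathrm{DS}(A)$. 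No Perron--Frobenius estimates, no uniformity, and no passage to the limit $\max F\to\infty$ are involved.
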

\begin{proof}
Let $F\subset A$ finite with $\dim_{\mathcal{H}}(J_F)=s$ and $\beta_k<s<\alpha_{k+1}$. We claim that there exists no $G\subset A$ finite with $\dim_{\mathcal{H}}(J_G)\in  (\beta_k,\alpha_{k+1})$ such that 
\[
\dim_{\mathcal{H}}(J_{F^\sharp})<\dim_{\mathcal{H}}(J_G)<\dim_{\mathcal{H}}(J_F).
\]

Suppose that $G\subset A$ finite with $\dim_{\mathcal{H}}(J_G)\in  (\beta_k,\alpha_{k+1})$. Let $a_q =\min (G\cup F)\setminus (G\cap F)$. We note that $I_k\subseteq F,G$, since $\alpha_k<\beta_k \leq \dim_{\mathcal{H}}(J_F), \dim_{\mathcal{H}}(J_G)$ and the fact that 
$\dim_{\mathcal{H}}(J_{A\setminus\{a_k\}})\geq \dim_{\mathcal{H}}(J_{A\setminus\{a_m\}})$ for $m\leq k$ by Proposition \ref{prop:increasing} and Theorem \ref{approx}.  So, $q>k\geq 2$. 

There are four cases to consider. Firstly, $a_q =\max F$. In that case, $G \supseteq F\setminus\max F$, hence $G\subseteq F^\sharp$. 
As $\dim_{\mathcal{H}}(J_{F^\sharp})<\dim_{\mathcal{H}}(J_{F})$, we conclude that $\dim_{\mathcal{H}}(J_{G})\leq\dim_{\mathcal{H}}(J_{F^\sharp})$. 
 
 The second case to consider is $a_q>\max F$. In that case $F\subset G$, hence $\dim_{\mathcal{H}}(J_{F})\leq \dim_{\mathcal{H}}(J_{G})$.
 
 As a third case we suppose that $a_q<\max F$ and $a_q \in F$.  Let $F_* = F\cap\{a_1,\ldots,a_q\}\supset I_k$. Then $F_*\setminus \{a_q\} = F_*\setminus\max F_*$, so that $G\subset F_*^\sharp$ and $F_*\subseteq F\setminus \max F\subset F^\sharp$. As 
 \[
 \alpha_{k+1}>\dim_{\mathcal{H}}(J_F)\geq \dim_{\mathcal{H}}(J_{F_*})>\dim_{\mathcal{H}}(J_{I_k}) = \beta_k,
 \]
 it follows from the assumption that 
 \[
 \dim_{\mathcal{H}}(J_{G})\leq \dim_{\mathcal{H}}(J_{F^\sharp_*})<\dim_{\mathcal{H}}(J_{F_*})\leq \dim_{\mathcal{H}}(J_{F^\sharp}),
 \]
 which settles this case. 
 
 For the remaining case we need to consider $a_q<\max F$ and $a_q \in G$.  In that case we consider $G_* = G\cap \{a_1,\ldots,a_q\}\supset I_k$. 
 Then $F\subset G_*^\sharp$, and 
 \[
 \beta_{k}<\dim_{\mathcal{H}}(J_{G_*})\leq \dim_{\mathcal{H}}(J_{G})< \alpha_{k+1}.
 \]
 So, using the assumption we find that 
 \[
 \dim_{\mathcal{H}}(J_F)<\dim_{\mathcal{H}}(J_{G^\sharp_*})<\dim_{\mathcal{H}}(J_{G_*}) \leq \dim_{\mathcal{H}}(J_{G}),
 \]
which completes the proof of the claim.

It follows from the claim that any open interval $I\subseteq (\beta_k,\alpha_{k+1})$ contains an open interval $I_0$ such that $\mathrm{DS}(A)\cap I_0 $ is empty. Indeed, if $\mathrm{DS}(A)\cap I$ is non-empty, then there exists $B\subset A$ with $\dim_{\mathcal{H}}(J_B)\in I$. By Theorem \ref{approx} we know that there exists $F\subset B$ finite with $\dim_{\mathcal{H}}(J_F)\in I$. 
From the claim we know that there exists no $G\subset A$ finite with 
\[
\dim_{\mathcal{H}}(J_{F^\sharp})<\dim_{\mathcal{H}}(J_G)<\dim_{\mathcal{H}}(J_F). 
\]
So, if we put $I_0=(\dim_{\mathcal{H}}(J_{F^\sharp}), \dim_{\mathcal{H}}(J_{F}))$, then  $\mathrm{DS}(A)\cap I_0 $ is empty by Theorem \ref{approx}. This shows that  $\mathrm{DS}(A)$ is nowhere dense in $(\beta_k,\alpha_{k+1})$.
\end{proof}

\section{Bounds for  $\dim_{\mathcal{H}}(J_{\{1,n\}})$}
To establish the results we need a generic  lower bound for the Hausdorff dimension of $J_{\{1,n\}}$. The main idea is to use the positive eigenvector for the operator 
\[
(L_{s,\{1\}} f)(x) =\left(\frac{1}{1+x}\right)^{2s}f\left(\frac{1}{1+x}\right).
\]
\begin{lemma}\label{eigenvector}
Let $\mu>0$ and $s\geq 0$. The operator $L_{s,\{\mu\}}\colon C([0,\frac{1}{\mu}])\to C([0,\frac{1}{\mu}])$ given by 
\[
(L_{s,\{\mu\} }f)(x) =\left(\frac{1}{\mu+x}\right)^{2s}f\left( \frac{1}{\mu+x}\right)
\]
has $v_s(x) = \left(\frac{1}{\lambda +x}\right)^{2s}$, where 
\[
\lambda = \frac{\mu+\sqrt{\mu^2+4}}{2},
\] 
as a strictly  positive eigenvector with eigenvalue $\lambda^{-2s}$. In particular, $r(L_{s,\{\mu\}}) = \lambda^{-2s}$. 
\end{lemma}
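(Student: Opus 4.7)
The plan is to verify the eigenvalue equation by direct substitution. Setting $v_s(x) = (\lambda+x)^{-2s}$ and applying the operator gives
\[
(L_{s,\{\mu\}}v_s)(x) = \left(\frac{1}{\mu+x}\right)^{2s}\left(\lambda + \frac{1}{\mu+x}\right)^{-2s} = \bigl(\lambda(\mu+x)+1\bigr)^{-2s}.
\]
For this to be a scalar multiple of $(\lambda+x)^{-2s}$, the factor inside the bracket must be proportional to $\lambda+x$ itself. The natural ansatz is $\lambda(\mu+x)+1 = \lambda(\lambda+x)$, i.e.\ $\lambda^2 - \mu\lambda - 1 = 0$. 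I would then solve this quadratic and select the positive root $\lambda = \tfrac12(\mu+\sqrt{\mu^2+4})$, which gives precisely the value in the statement, and with it $(L_{s,\{\mu\}}v_s)(x) = \lambda^{-2s}(\lambda+x)^{-2s} = \lambda^{-2s}v_s(x)$.

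Next I would check that $v_s$ really lies in $C([0,1/\mu])$ and is strictly positive. Since $\lambda>0$, the function $x\mapsto (\lambda+x)^{-2s}$ is continuous and strictly positive on $[0,1/\mu]$, and the image of $[0,1/\mu]$ under $x\mapsto (\mu+x)^{-1}$ is contained in $[0,1/\mu]$, so the operator is well-defined on this space and the eigenvalue equation makes sense.

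Finally, to identify the spectral radius, I would invoke Lemma \ref{basic}: since $L_{s,\{\mu\}}$ is a positive linear operator on $C([0,1/\mu])$ and $v_s$ is a strictly positive eigenvector with $L_{s,\{\mu\}}v_s = \lambda^{-2s}v_s$, the sandwich $\lambda^{-2s}v_s \leq L_{s,\{\mu\}}v_s \leq \lambda^{-2s}v_s$ yields $r(L_{s,\{\mu\}}) = \lambda^{-2s}$.

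There is really no substantial obstacle here; the content is purely algebraic and boils down to recognising the quadratic $\lambda^2-\mu\lambda-1=0$. The only subtlety worth flagging is the choice of root: one must take the positive root so that $v_s$ is strictly positive on $[0,1/\mu]$ and so that the eigenvector is compatible with the framework of Lemma \ref{basic}, after which the spectral radius identification is immediate.
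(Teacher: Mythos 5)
Your proof is correct and follows essentially the same route as the paper: verify the eigenvalue equation by direct computation via the quadratic $\lambda^2-\mu\lambda-1=0$, then invoke Lemma \ref{basic} to identify the eigenvalue as the spectral radius. The only cosmetic difference is that you derive the quadratic from an ansatz rather than starting from it, and you explicitly note the self-mapping property of $x\mapsto(\mu+x)^{-1}$ on $[0,1/\mu]$, which is a harmless extra check.
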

\begin{proof} Note that $\lambda$ satisfies $\lambda^2 -\mu\lambda -1=0$, hence 
\[
v_s\left(\frac{1}{\mu+x}\right) = \left(\frac{1}{\lambda +\frac{1}{\mu +x}}\right)^{2s} = \left(\frac{\mu +x}{\mu\lambda +1 +\lambda x}\right)^{2s} 
=  \left(\frac{\mu +x}{\lambda^2 +\lambda x}\right)^{2s} = \lambda^{-2s}(\mu+x)^{2s}v_s(x).  
\]
This implies that $L_{s,\{\mu\}}v_s(x) = \lambda^{-2s}v_s(x)$. As $v_s$ is strictly positive,  $r(L_{s,\{\mu\}}) =\lambda^{-2s}$ by Lemma \ref{basic}.
\end{proof}
Using this results we now prove the following estimates for the Hausdorff dimension of $J_{\{1,n\}}$. 
\begin{theorem}\label{bounds} For $n\geq 1$ let 
\[
s_-(n) =\max\left\{s\geq 0\colon \lambda^{-2s}\left(1+\left(\frac{\lambda}{n+\lambda -1}\right)^{2s}\right)\geq 1\right\}
\]
and 
\[
s_+(n) =\min\left\{s\geq 0\colon \lambda^{-2s}\left(1+\left(\frac{\lambda+1}{n+\lambda}\right)^{2s}\right)\leq 1\right\},
\]
where $\lambda = \frac{1+\sqrt{5}}{2}$. 
Then 
\[
s_-(n)\leq  \dim_{\mathcal{H}}(J_{\{1,n\}}) \leq s_+(n).
\]
\end{theorem}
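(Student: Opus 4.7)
The plan is to apply Lemma \ref{basic} to the operator $L_{s,\{1,n\}}=L_{s,\{1\}}+L_{s,\{n\}}$ using the explicit positive test function $v_s(x)=(\lambda+x)^{-2s}$ from Lemma \ref{eigenvector} taken with $\mu=1$, so that $\lambda=(1+\sqrt{5})/2$ satisfies the golden-ratio identity $\lambda^2=\lambda+1$ and $L_{s,\{1\}}v_s=\lambda^{-2s}v_s$ exactly. All that is then needed is a sharp two-sided bound on the ratio $(L_{s,\{n\}}v_s)(x)/v_s(x)$ over $x\in[0,1]$, after which the theorem follows from Theorem \ref{thm:FN}(iii),(v) together with the strict monotonicity of $s\mapsto r(L_{s,\{1,n\}})$.

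Concretely, I would first compute
\[
\frac{(L_{s,\{n\}}v_s)(x)}{v_s(x)} = \left(\frac{\lambda+x}{\lambda(n+x)+1}\right)^{2s},
\]
and show, via a one-line derivative computation that uses $\lambda^2=\lambda+1$ to produce the numerator $\lambda(n-1)\geq 0$, that the inner ratio is non-decreasing in $x$ on $[0,1]$. Hence its minimum occurs at $x=0$ and its maximum at $x=1$; using $\lambda(n+\lambda-1)=\lambda n+\lambda^2-\lambda=\lambda n+1$ and $\lambda(n+1)+1=\lambda(n+\lambda)$ these two extrema equal $(n+\lambda-1)^{-2s}$ and $((\lambda+1)/(\lambda(n+\lambda)))^{2s}$, respectively.

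Adding the exact eigen-identity for $L_{s,\{1\}}$ then gives the sandwich
\[
\bigl(\lambda^{-2s}+(n+\lambda-1)^{-2s}\bigr)v_s \;\leq\; L_{s,\{1,n\}}v_s \;\leq\; \Bigl(\lambda^{-2s}+\bigl(\tfrac{\lambda+1}{\lambda(n+\lambda)}\bigr)^{2s}\Bigr)v_s,
\]
and factoring $\lambda^{-2s}$ out of each side rewrites the two bounding constants as
$\lambda^{-2s}\bigl(1+(\lambda/(n+\lambda-1))^{2s}\bigr)$ and $\lambda^{-2s}\bigl(1+((\lambda+1)/(n+\lambda))^{2s}\bigr)$, precisely the expressions appearing in the definitions of $s_-(n)$ and $s_+(n)$. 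Lemma \ref{basic} transfers the sandwich to $r(L_{s,\{1,n\}})$.

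Finally, since by Theorem \ref{thm:FN}(v) the Hausdorff dimension $\dim_{\mathcal{H}}(J_{\{1,n\}})$ is characterised as the unique $s$ with $r(L_{s,\{1,n\}})=1$, and by Theorem \ref{thm:FN}(iii) this spectral radius is continuous and strictly decreasing in $s$, the inequalities $r(L_{s_-(n),\{1,n\}})\geq 1$ and $r(L_{s_+(n),\{1,n\}})\leq 1$ immediately yield $s_-(n)\leq \dim_{\mathcal{H}}(J_{\{1,n\}})\leq s_+(n)$. The only mildly delicate step is the algebraic bookkeeping that recognises the golden-ratio identity at two different places (monotonicity of the ratio and rewriting the extrema); everything else is a direct application of the tools already established.
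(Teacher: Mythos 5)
Your proposal is correct and is essentially the paper's own argument: both use the test function $v_s(x)=(\lambda+x)^{-2s}$ from Lemma \ref{eigenvector}, observe that the ratio $(L_{s,\{1,n\}}v_s)/v_s=\lambda^{-2s}\bigl(1+\bigl(\tfrac{\lambda+x}{n+x+\lambda-1}\bigr)^{2s}\bigr)$ is increasing in $x$ (hence extremised at the endpoints $x=0,1$), and then feed the resulting two-sided estimate into Lemma \ref{basic} and Theorem \ref{thm:FN}(iii),(v). Your derivative computation makes the monotonicity explicit where the paper only asserts it, and your algebraic route passes through $\lambda(n+x)+1$ rather than $\lambda(n+x+\lambda-1)$, but these are the same identity rearranged; the content is identical.
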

\begin{proof}
Note that if $v_s(x) =\left(\frac{1}{\lambda+x}\right)^{2s}$, so $L_{s,\{1\}}v_s = \lambda^{-2s}v_s$,  then 
\[
v_s\left(\frac{1}{x+n}\right) =  \left(\frac{1}{\lambda+\frac{1}{n+x}}\right)^{2s} =\left(\frac{n+x}{\lambda(n+x)+1}\right)^{2s} =\frac{(n+x)^{2s}}{\lambda^{2s}(n+x +\lambda^{-1})^{2s}} = \frac{(n+x)^{2s}}{\lambda^{2s}(n+x +\lambda -1)^{2s}}, 
\]
as $\lambda^{-1} =\lambda -1$. This implies that 
\[
(L_{s,\{1,n\}}v_s)(x) = \lambda^{-2s}\left( 1+\left(\frac{\lambda+x}{n+x +\lambda -1}\right)^{2s}\right)v_s(x).
\]
For $n>1$ and $x\in [0,1]$ the continuous function, 
\[
s\mapsto  \lambda^{-2s}\left( 1+\left(\frac{\lambda+x}{n+x +\lambda -1}\right)^{2s}\right),
\]
is strictly decreasing, positive, and at $s=0$ takes the value 2.  Moreover, for $n>1$ and $s>0$, the function 
\[
x\mapsto  \lambda^{-2s}\left( 1+\left(\frac{\lambda+x}{n+x +\lambda -1}\right)^{2s}\right)
\]
is strictly increasing on $[0,1]$. Thus, its maximum is $s_+(n)$, which is attained at $x=1$, and its minimum is $s_-(n)$, which is attained at $x=0$.

It follows that for $s\geq s_+(n)$ that $L_{s,\{1,n\}}v_s(x)\leq v_s(x)$, hence  $r(L_{s,\{1,n\}})\leq 1$ by Lemma \ref{basic}. So, by Theorem \ref{thm:FN} we get that $ \dim_{\mathcal{H}}(J_{\{1,n\}}) \leq s_+(n)$. Similarly, for $s\leq s_-(n)$ we have that $L_{s,\{1,n\}}v_s(x)\geq v_s(x)$, so that  $r(L_{s,\{1,n\}})\geq 1$. So, by Theorem \ref{thm:FN} we get that $ \dim_{\mathcal{H}}(J_{\{1,n\}}) \geq s_-(n)$.

\end{proof}
We can use the previous theorem to derive a general lower bound for $\dimh(J_{\{1,n\}})$ for $n\geq 4$. 
\begin{corollary}\label{lowerbound} For each $n\geq 4$ we have that
\[
 \dim_{\mathcal{H}}(J_{\{1,n\}})>\frac{0.52679}{\ln(n)}.
\]
\end{corollary}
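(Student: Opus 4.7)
My plan is to apply the lower bound from Theorem~\ref{bounds}: write $c = 0.52679$ and verify that $s = c/\ln n$ satisfies the defining inequality of $s_-(n)$ strictly, for every integer $n \geq 4$. Since the map $s \mapsto \lambda^{-2s}\bigl(1+(\lambda/(n+\lambda-1))^{2s}\bigr)$ is continuous and strictly decreasing in $s$, strict inequality at $s = c/\ln n$ forces $s_-(n) > c/\ln n$, which together with $\dimh(J_{\{1,n\}}) \geq s_-(n)$ yields the corollary. Distributing $\lambda^{-2s}$ and using $\lambda^{-2s}\lambda^{2s} = 1$ puts the defining inequality in the symmetric form
\[
\phi(n) := \lambda^{-2c/\ln n} + (n+\lambda-1)^{-2c/\ln n} > 1,
\]
and it is this inequality that I would then check for all integers $n \geq 4$.

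The next step is to show that $\phi$ is strictly increasing on $[4,\infty)$, so that the problem reduces to the base case $n = 4$. The first summand $\lambda^{-2c/\ln n}$ is increasing because $\lambda > 1$ and the exponent $-2c/\ln n$ is increasing in $n$. For the second summand, taking logarithms shows it is increasing in $n$ precisely when the ratio $R(n) := \ln(n+\lambda-1)/\ln n$ is decreasing; direct differentiation reduces $R'(n) < 0$ to $n\ln n < (n+\lambda-1)\ln(n+\lambda-1)$, which follows from strict monotonicity of $x \mapsto x\ln x$ on $[1/e, \infty)$. Hence $\phi(n) \geq \phi(4)$ for all $n \geq 4$, and it remains to verify $\phi(4) > 1$.

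The constant $c = 0.52679$ is chosen so that $c = 0.76\ln 2$ to five decimals, making $c/\ln 4 = 0.38$ exactly; thus at $n = 4$ one has $\phi(4) = \lambda^{-0.76} + (3+\lambda)^{-0.76}$. Rigorous lower bounds from short truncations of the Taylor series for $\exp$, combined with an enclosure of $\lambda = (1+\sqrt 5)/2$, give $\lambda^{-0.76} > 0.6936$ and $(3+\lambda)^{-0.76} > 0.3126$, and these sum to more than $1.006$, hence $\phi(4) > 1$. This calculation is the main obstacle: the margin $\phi(4) - 1$ is only about $0.006$, so the certified numerical estimates must be kept sharp to roughly three decimals to avoid losing it. Once $\phi(4) > 1$ is secured, the monotonicity of $\phi$ finishes the argument for all $n \geq 4$.
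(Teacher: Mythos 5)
Your proposal is correct and follows the same overall strategy as the paper: apply Theorem~\ref{bounds}, reduce to showing that the function $\phi(n) = \lambda^{-2c/\ln n} + (n+\lambda-1)^{-2c/\ln n}$ (the paper's $h$) exceeds $1$ for all $n \geq 4$, establish that $\phi$ is increasing on $[4,\infty)$, and verify the base case $n=4$ numerically. Where you diverge is in the monotonicity step, and your version is cleaner: the paper differentiates $h$ directly and bounds the resulting bracketed expression term-by-term, whereas you split $\phi$ into its two summands, note the first is trivially increasing (base $\lambda>1$, exponent rising to $0$), and reduce the second to the monotonicity of $x\mapsto x\ln x$ via the ratio $R(n)=\ln(n+\lambda-1)/\ln n$ being decreasing. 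That is a genuine simplification of the computation and avoids the paper's somewhat delicate sign analysis of $h'$. One small inaccuracy: $c/\ln 4$ is not exactly $0.38$ (it is $\approx 0.37999$, since $c=0.52679 < 0.76\ln 2$); however, since $c/\ln 4 < 0.38$ and both bases $\lambda$ and $3+\lambda$ exceed $1$, replacing the exponent $2c/\ln 4$ by the slightly larger $0.76$ only \emph{decreases} the summands, so $\phi(4) > \lambda^{-0.76} + (3+\lambda)^{-0.76}$ and your numerical check still gives a valid lower bound — this should be stated as an inequality rather than an equality. The passage from strict inequality $\phi(n)>1$ to $s_-(n) > c/\ln n$ via strict monotonicity of the defining map in $s$ is correct and is a point the paper leaves implicit.
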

\begin{proof}
We need to show for each integer $n\geq 4$ that $\frac{0.52679}{\ln(n)}<s_-(n)$. For $x\geq 4$ let 
\[
s(x) =\frac{c}{\ln x}\mbox{\quad and \quad } h(x) = \left( \frac{1}{\lambda}\right)^{2s(x)} + \left( \frac{1}{x+\lambda-1}\right)^{2s(x)}.
\]
Here $c> 0$ is a constant which will be chosen later to get the lower bound for $x\geq 4$. But for the moment it is useful to work with $c$ and any $x\geq 4$, because the method of proof gives a way to get a better constant if one has that $x\geq N$ for some fixed $N$. 

By Theorem \ref{bounds} we need to show that $h(x)>1$ for all $x\geq 4$. We first show that $h'(x)>0$ for all $x\geq 4$, and subsequently find a suitable constant $c>0$ such that $h(4)> 1$. Note that 
\[
s'(x)  = -\frac{c}{x\ln^2(x)}<0
\]
for $x\geq 4$, and 
\[
h'(x) = 2s'(x)\left(\frac{1}{x+\lambda -1}\right)^{2s(x)}\left( \left(\frac{x+\lambda -1}{\lambda}\right)^{2s(x)}\ln\left(\frac{1}{\lambda}\right) + \ln\left(\frac{1}{x+\lambda-1}\right) - \frac{s(x)}{(x+\lambda -1)s'(x)}\right).
\]
So, $2s'(x)\left(\frac{1}{x+\lambda -1}\right)^{s(x)}<0$ and  
$ \left(\frac{x+\lambda -1}{\lambda}\right)^{2s(x)}\ln\left(\frac{1}{\lambda}\right) <0$.
Moreover, $-\frac{s(x)}{s'(x)} =  x\ln (x)$, so that 
\[
- \frac{s(x)}{(x+\lambda -1)s'(x)} = \frac{x\ln (x)}{(x+\lambda -1)}< \frac{x\ln (x+\lambda -1)}{(x+\lambda -1)}.
\]
This implies that 
\[
\ln\left(\frac{1}{x+\lambda-1}\right) - \frac{s(x)}{(x+\lambda -1)s'(x)} < -\ln(x+\lambda -1)\left(1 - \frac{x}{x+\lambda -1} \right)< 0, 
\]
so $h'(x)>0$  for all $x\geq 4$.

For $x=4$ and $s(4) = 0.52679/\ln(4)$ a direct calculation shows that  
\[
h(4) = \left( \frac{1}{\lambda}\right)^{\frac{0.52679}{\ln 2}} + \left( \frac{1}{3+\lambda}\right)^{\frac{0.52679}{\ln 2}} >1.
\]
Thus, $\dimh(J_{\{1,n\}})> \frac{0.52679}{\ln(n)}$. 
\end{proof}

In particular, we find that $0.379998\leq \dimh(J_{\{1,4\}})$, which is a surprisingly good lower bound considering the estimates in Table \ref{table}. 

To establish Theorem \ref{Mq} we will also need a lower bound for $\dimh(J_{\{1,2^q\}})$ for $q\geq 12$. Using the same method as in the proof of Corollary \ref{lowerbound}  we need to find a constant $c>0$ such that for $x=2^{12}$ and $s(2^{12}) = \frac{c}{12\ln(2)}$ we have that 
\[
h(2^{12}) = \left( \frac{1}{\lambda}\right)^{\frac{c}{6\ln 2}} + \left( \frac{1}{2^{12}+\lambda-1}\right)^{\frac{c}{6\ln 2}} >1.
\]
In this case, one can check that $c =1.0571$ gives $h(2^{12})>1.005$, hence we have for $q\geq 12$ that 
\begin{equation}\label{est2^12}
\dimh(J_{\{1,2^q\}})\geq \frac{1.0571}{q\ln(2)} \geq \frac{1.525}{q}.
\end{equation}

\section{Proof of Theorem \ref{thm1}}
\begin{proof}[Proof of Theorem \ref{thm1}] Clearly $0$ and $\sigma = \dim_{\mathcal{H}}(J_A)$ are in the dimension spectrum of $A$. Take $0<s<\sigma$. We will use Lemma \ref{break point} to show that $s\in\mathrm{DS}(A)$. For $m\geq 1$ let $I_m = \{a_1,\ldots,a_m\}$ and let $u\in C([0,1])$ be the constant 1 function.  By Theorem \ref{approx} we know that $\sigma_m =\dim_{\mathcal{H}}(J_{I_m})\to \sigma$.  

Note that for each $m\geq 1$ and $x\in [0,1]$ we have that 
\[
(L_{s,I_m}u)(x)\leq \sum_{j=1}^m \left(\frac{1}{a_j}\right)^{2s}=:\alpha_m(s).
\]
We claim that $\alpha_m(s)>1$ for all $m$ sufficiently large. Indeed, if $\alpha_m(s)\leq 1$ for all $m$, then $r(L_{s,I_m})\leq 1$ for all $m\geq 1$ by Lemma \ref{basic}. 
As $0<s<\sigma$, we know from Theorem \ref{thm:FN} that 
\[
1=r(L_{\sigma_m,I_m}) < r(L_{s,I_m})\leq \alpha_m(s)\leq 1
\]
for all $m$ sufficiently large, since $\sigma_m>s$ for all $m$ large. This is impossible, hence $\alpha_m(s)>1$ for all $m$ sufficiently large. 

Now let $F\subset A$ finite and $a_{k_0}\in A$ be a strict break point for $(F,s)$. So, $r(L_{s,F\cup\{a_{k_0}\}})\geq 1$. Let $v_s$ be the strictly positive eigenvector for $L_{s,F\cup\{a_{k_0}\}}$, and set $H_m = F\cup\{a_{k_0+j}\colon j = 1,\ldots,m\}$. For $x\in [0,1]$, we have that 
\[
\frac{a_{k_0}+x}{a_{k_0+j}+x}\geq \frac{a_{k_0}}{a_{k_0+j}},
\]
so that 
\[
\left( \frac{1}{a_{k_0+j}+x}\right)^{2s} \geq \left( \frac{a_{k_0}}{a_{k_0+j}}\right)^{2s}\left( \frac{1}{a_{k_0}+x}\right)^{2s}\mbox{\quad for $j=1,\ldots,m$}.
\]

By Theorem \ref{thm:FN}, $v_s$ is a decreasing function on $[0,1]$. This implies that 
\begin{eqnarray*}
(L_{s,H_m}v_s)(x) & = & (L_{s,F}v_s)(x) + \sum_{j=1}^m \left( \frac{1}{a_{k_0+j}+x}\right)^{2s}v_s\left( \frac{1}{a_{k_0+j}+x}\right)\\
 & \geq & (L_{s,F}v_s)(x) +  \left( \frac{1}{a_{k_0}+x}\right)^{2s}v_s\left( \frac{1}{a_{k_0}+x}\right)\sum_{j=1}^m \left( \frac{a_{k_0}}{a_{k_0+j}}\right)^{2s}.
\end{eqnarray*}
As $k_0\geq 2$, we can use the assumption, $a_ma_n\geq a_{m+n}$ for all $m\geq n\geq 1$ with $m\geq 2$, to find that 
\[
\sum_{j=1}^m \left( \frac{a_{k_0}}{a_{k_0+j}}\right)^{2s} \geq \sum_{j=1}^m \left( \frac{1}{a_{j}}\right)^{2s} =\alpha_m(s).
\]
As $\alpha_m(s)>1$ for all $m\geq 1$ sufficiently large, there exists a constant $\lambda>1$ such that 
\[
(L_{s,H_m}v_s)(x) \geq (L_{s,F}v_s)(x) +\lambda \left( \frac{1}{a_{k_0}+x}\right)^{2s}v_s\left( \frac{1}{a_{k_0}+x}\right)
\]
for all $m$ large. Now using Lemma \ref{scaling} we conclude that there exists $\mu>1$ such that 
\[
(L_{s,H_m}v_s)(x) \geq \mu\left((L_{s,F}v_s)(x) +\left( \frac{1}{a_{k_0}+x}\right)^{2s}v_s\left( \frac{1}{a_{k_0}+x}\right)
\right)\geq \mu v_s(x)
\]
for all $m$ large. 
This implies that $r(L_{s,H_m})>1$ for all $m$ large by Lemma \ref{basic}, hence $\dim_{\mathcal{H}}(J_{H_m})>s$ for all $m$ large. As $F\cup T\supset H_m$, where $T =\{a_n\colon n>k_0\}$, we have that $\dim_{\mathcal{H}}(J_{F\cup T})>s$. The result now follows from Lemma \ref{break point}.
\end{proof}
\section{Gaps in $\mathrm{DS}(P^*_q)$: Proof of Theorem \ref{thm2}}
To establish the structure of the dimension spectrum for $P_n^*$, the following result is useful.
\begin{theorem}\label{En*} Suppose that $F\subset P_q^*$ is finite.  If $q\geq 3$ and $\{1,q\}\subseteq F$, or, $q=2$ and $\{1,2,4\}\subseteq F$, then  
\[
\dim_{\mathcal{H}}(J_{F^\sharp})<\dim_{\mathcal{H}}(J_{F}),
\]
where $F^\sharp$ is given by (\ref{Fhash}). 
\end{theorem}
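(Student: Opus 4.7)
The plan is to show that $r(L_{s,F^\sharp}) < 1$ at $s := \dim_{\mathcal{H}}(J_F)$, which by Lemma \ref{infradius} and Theorem \ref{infdim} forces $\dim_{\mathcal{H}}(J_{F^\sharp}) < s = \dim_{\mathcal{H}}(J_F)$. Write $q^k := \max F$, $F_0 := F\setminus\{q^k\}$, and $T_k := \{q^{k+j}\colon j\geq 1\}$, so that $F = F_0\cup\{q^k\}$ and $F^\sharp = F_0\cup T_k$, and let $v_s$ denote the strictly positive eigenvector of $L_{s,F}$ with $L_{s,F}v_s = v_s$ from Theorem \ref{thm:FN}(i). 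The identities
\[
L_{s,F^\sharp} v_s = L_{s,F_0} v_s + L_{s,T_k} v_s, \qquad v_s = L_{s,F_0} v_s + L_{s,\{q^k\}} v_s
\]
reduce the whole problem to a pointwise inequality of the form $(L_{s,T_k} v_s)(x) \leq \beta\,(L_{s,\{q^k\}} v_s)(x)$ on $[0,1]$ with some $\beta<1$: applying Lemma \ref{scaling} to $f := L_{s,F_0}v_s$ and $g := L_{s,\{q^k\}}v_s$ upgrades this to $L_{s,F^\sharp}v_s \leq \mu v_s$ with $\mu<1$, and Lemma \ref{basic} then gives $r(L_{s,F^\sharp}) \leq \mu < 1$.

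To set up the ratio bound one rewrites
\[
\frac{(L_{s,T_k} v_s)(x)}{(L_{s,\{q^k\}} v_s)(x)} = \sum_{j\geq 1} \left(\frac{q^k+x}{q^{k+j}+x}\right)^{2s} \frac{v_s(1/(q^{k+j}+x))}{v_s(1/(q^k+x))}.
\]
The eigenvector ratios are controlled via the log-Lipschitz estimate (\ref{ineqv_s}) with $\gamma = 1$, applied with the sharp gap $1/(q^k+x) - 1/(q^{k+j}+x)$ between the two arguments; the geometric factors satisfy $(q^k+x)/(q^{k+j}+x) \leq q^{-j}(1 + q^{-k})$, and the sum $\sum_{j\geq 1} q^{-2sj} = 1/(q^{2s}-1)$ contracts precisely when $q^{2s} > 2$, i.e.\ $s > \ln 2/(2\ln q)$. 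The hypothesis $\{1,q\}\subseteq F$ for $q\geq 3$ (respectively $\{1,2,4\}\subseteq F$ for $q=2$) combined with the set-monotonicity $B\subseteq A \Rightarrow \dim_{\mathcal{H}}(J_B) \leq \dim_{\mathcal{H}}(J_A)$ gives $s \geq \dim_{\mathcal{H}}(J_{\{1,q\}})$ (respectively $s \geq \dim_{\mathcal{H}}(J_{\{1,2,4\}})$); that these exceed the threshold $\ln 2/(2\ln q)$ can be read off from Table \ref{table} for small $q$ and from Corollary \ref{lowerbound} for large $q$.

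The main obstacle will be to squeeze the ratio strictly below $1$ uniformly in $x\in[0,1]$ in the extremal borderline cases, most notably $F = \{1,3\}$ with $k=1$ and $F = \{1,2,4\}$ with $k=2$. In these cases the naive combined estimate $(1+q^{-k})^{2s} e^{2s/q^k}/(q^{2s}-1)$ fails to be less than $1$, because the exponential correction $e^{2s/q^k}$ erodes the margin provided by the geometric sum. The remedy is to split the series into the $j=1$ term, which is treated exactly using that the eigenvector arguments $1/(q^{k+1}+x)$ and $1/(q^k+x)$ lie at a distance strictly smaller than $1/(q^k+x)$, and the tail $j\geq 2$, which is bounded by the much smaller geometric series $1/(q^{2s}(q^{2s}-1))$. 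A uniform verification of the resulting bound over $x \in [0,1]$ then closes the argument; the hypothesis on $F$ is precisely what supplies the required slack in the base cases, and the extra element $4$ in the $q=2$ case is exactly what pushes $s$ comfortably above the critical value $1/2$ that $\{1,2\}$ alone just barely clears.
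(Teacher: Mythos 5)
Your proposal follows the same strategy as the paper's proof: compare $L_{s,F^\sharp}v_s$ to $L_{s,F}v_s$ at $s=\dim_{\mathcal{H}}(J_F)$ using the positive eigenvector $v_s$ of $L_{s,F}$, control the eigenvector ratios via (\ref{ineqv_s}), bound the geometric tail $\sum_{j\geq1}q^{-2sj}$ via $q^{2s}>2$, then apply Lemma~\ref{scaling}, Lemma~\ref{basic}, Lemma~\ref{infradius} and Theorem~\ref{infdim}; you also correctly identify the borderline cases ($q=3$, $k=1$ and $q=2$, $k=2$) where the exponential correction $e^{2s/q^k}$ requires splitting off the leading terms, exactly as in the paper. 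The only thing left implicit is the explicit numerical verification of the refined bounds in those borderline cases (and the easy large-$k$ cases), which the paper carries out with Table~\ref{table} and Corollary~\ref{lowerbound}, but your framework is set up to deliver those checks.
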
  
\begin{proof}
Suppose that  $F\subset P_q^*$ is finite with $\max F = q^k$. Set $G=F\setminus\max F$ and, for $0<s\leq 1$, let $v_s$ be the positive eigenvector of $L_{s,F}$ with eigenvalue $\lambda_s=r(L_{s,F})$. 

Then for each $m\geq q^k$ and $x\in[0,1]$ we have that $\frac{q^k+x}{m+x}\leq \frac{q^k+1}{m+1}$. Furthermore by (\ref{ineqv_s}), $v_s$ satisfies 
\[
v_s\left(\frac{1}{m+x}\right)\leq e^{2s\left(\frac{1}{q^k+x}-\frac{1}{m+x}\right)} v_s\left(\frac{1}{q^k+x}\right)\leq e^{\frac{2s}{q^k}} v_s\left(\frac{1}{q^k+x}\right). 
\]
Note that for $s>0$ the operator $L_{s,F^\sharp}$ is defined and bounded. Moreover, 
\begin{eqnarray*}
(L_{s,F^\sharp}v_s)(x)
& = & (L_{s, G}v_s)(x) + \sum_{j=1}^\infty \left(\frac{1}{q^{k+j}+x}\right)^{2s}v_s\left(\frac{1}{q^{k+j}+x}\right)\\
 & \leq & (L_{s, G}v_s)(x) +  \left(\frac{1}{q^k+x}\right)^{2s}v_s\left(\frac{1}{q^k+x}\right)e^{\frac{2s}{q^k}} 
 \sum_{j=1}^\infty \left(\frac{q^k+x}{q^{k+j}+x}\right)^{2s}.\\
 \end{eqnarray*}
 We have that 
 \[
 \sum_{j=1}^\infty \left(\frac{q^k+x}{q^{k+j}+x}\right)^{2s}  \leq   \sum_{j=1}^\infty \left(\frac{q^k+1}{q^{k+j}+1}\right)^{2s}
  \leq  \left(\frac{q^k+1}{q^{k}}\right)^{2s} \sum_{j=1}^\infty \left(\frac{1}{q^{j}}\right)^{2s}=  \frac{\left(1+\frac{1}{q^k}\right)^{2s}}{q^{2s} -1}.
\]

Now let 
\[
\gamma(k,q,s) =  \frac{\left(e^{\frac{1}{q^k}}\left(1+\frac{1}{q^k}\right)\right)^{2s}}{q^{2s} -1}\leq \frac{e^{\frac{4s}{q^k}}}{q^{2s} -1},
\]
as $e^x\geq 1+x$. Note that if $\gamma(k,q,s)<1$, then there exists by Lemma \ref{scaling} a $\mu<1$ such that 
$L_{s,F^\sharp}v_s\leq \mu L_{s, F}v_s  = \mu\lambda_s v_s$. 
In particular, if this holds for $s=\dim_{\mathcal{H}}(J_F)$, we get that  $L_{s,F^\sharp}v_s\leq   \mu v_s$. This would imply that $r(L_{s,F^\sharp})\leq \mu<1$, hence $\dim_{\mathcal{H}}(J_{F^\sharp})<s$ by Lemma \ref{infradius} and Theorem \ref{infdim}.  So we need to show that $\gamma(k,q,s_0)<1$ for $s_0=\dim_{\mathcal{H}}(J_F)$.  

Firstly suppose that $q\geq 4$ and $k= 1$, so $F=\{1,q\}$. By Corollary \ref{lowerbound}, $\frac{0.52679}{\ln(q)}< s_0\leq 1/2$, so that 
\[
\gamma(1,q,s_0) \leq \frac{e^{4s_0/q}}{q^{2s_0} -1}\leq \frac{e^{2/q}}{q^{2s_0} -1}<1, 
\] 
as $q^{\frac{1.05358}{\ln(q)}}-1  = e^{1.05358} -1>e^{0.5}\geq e^{2/q}$ for $q\geq 4$. 

Likewise, if $q\geq 4$ and $k\geq 2$, then $\frac{0.52679}{\ln(q)} \leq \dim_{\mathcal{H}}(J_{\{1,q\}})\leq s_0=\dim_{\mathcal{H}}(J_{F})\leq 1$ and $q^k\geq 2q$, so that 
\[
\gamma(k,q,s_0) \leq \frac{e^{4s_0/q^k}}{q^{2s_0} -1}\leq \frac{e^{2/q}}{q^{2s_0} -1}<1.
\] 

Let us now consider the case $q=3$ and $k\geq 2$. In that case
\[
\gamma(k,3,s_0) \leq \frac{e^{4s_0/3^k}}{3^{2s_0} -1}\leq \frac{e^{4/9}}{3^{2s_0} -1}<0.92<1, 
\]
since $s_0=\dim_{\mathcal{H}}(J_{\{1,3\}})\geq 0.454$, see Table \ref{table}.  

The case $q=3$ and $k=1$ requires a more refined estimate than $\gamma(1,3,s_0)$. In that case we have that  
\begin{eqnarray*}
(L_{s,F^\sharp}v_s)(x)
& = & (L_{s, G}v_s)(x) + \sum_{j=1}^\infty \left(\frac{1}{3^{1+j}+x}\right)^{2s}v_s\left(\frac{1}{3^{1+j}+x}\right)\\
 & \leq & (L_{s, G}v_s)(x) +  \left(\frac{1}{3+x}\right)^{2s}v_s\left(\frac{1}{3+x}\right)\sum_{j=1}^\infty \left(\frac{4}{3^{j+1}+1}\right)^{2s}e^{2s\left(\frac{1}{3} - \frac{1}{3^{j+1}}\right)}.
 \end{eqnarray*} 
Note that 
\begin{eqnarray*}
\sum_{j=1}^\infty \left(\frac{4}{3^{j+1}+1}\right)^{2s}e^{2s\left(\frac{1}{3} - \frac{1}{3^{j+1}}\right)}& \leq & 
4^{2s}\left(\left(\frac{e^{2/9}}{10}\right)^{2s} +\left(\frac{e^{8/27}}{28}\right)^{2s} + e^{2s/3}\sum_{j=3}^\infty \left(\frac{1}{3^{j+1}}\right)^{2s}\right) \\
 & = & 4^{2s}\left(\left(\frac{e^{2/9}}{10}\right)^{2s}  +\left(\frac{e^{8/27}}{28}\right)^{2s} +  \left(\frac{e^{1/3}}{27}\right)^{2s} \left(\frac{1}{3^{2s}-1}\right)\right). \\
\end{eqnarray*}
Now using the  fact that $0.454\leq s_0=\dim_{\mathcal{H}}(J_{\{1,3\}})\leq 0.455$, we get that 
\[
4^{2s}\left(\left(\frac{e^{2/9}}{10}\right)^{2s}  +\left(\frac{e^{8/27}}{28}\right)^{2s} +  \left(\frac{e^{1/3}}{27}\right)^{2s} \left(\frac{1}{3^{2s}-1}\right)\right)< 0.899<1,
\]
which gives the desired inequality.

Finally let us consider the case $q=2$ and $\{1,2,4\}\subseteq F$. 
If $k\geq 3$, then 
\[
\gamma(k,2,s_0) \leq \frac{e^{4s_0/2^k}}{2^{2s_0} -1}\leq  \frac{e^{s_0/2}}{2^{2s_0} -1}<0.915<1, 
\]
since $0.669\leq s_0=\dim_{\mathcal{H}}(J_{\{1,2,4\}})\leq 0.67$, see Table \ref{table}.  

If $k=2$, then $F=\{1,2,4\}$ and  $G=\{1,2\}$, so that 
\begin{eqnarray*}
(L_{s,F^\sharp}v_s)(x)
& = & (L_{s, G}v_s)(x) + \sum_{j=1}^\infty \left(\frac{1}{2^{2+j}+x}\right)^{2s}v_s\left(\frac{1}{2^{2+j}+x}\right)\\
 & \leq & (L_{s, G}v_s)(x) +  \left(\frac{1}{4+x}\right)^{2s}v_s\left(\frac{1}{4+x}\right)\sum_{j=1}^\infty \left(\frac{5}{2^{j+2}+1}\right)^{2s}e^{2s\left(\frac{1}{4} - \frac{1}{2^{j+2}}\right)}.
 \end{eqnarray*} 
Note that 
\begin{eqnarray*}
\sum_{j=1}^\infty \left(\frac{5}{2^{j+2}+1}\right)^{2s}e^{2s\left(\frac{1}{4} - \frac{1}{2^{j+2}}\right)} & \leq & 
5^{2s}\left(\left(\frac{e^{1/8}}{9}\right)^{2s} +\left(\frac{e^{3/16}}{17}\right)^{2s} + e^{s/2}\sum_{j=3}^\infty \left(\frac{1}{2^{j+2}}\right)^{2s}\right) \\
 & = & 5^{2s}\left(\left(\frac{e^{1/8}}{9}\right)^{2s}  +\left(\frac{e^{3/16}}{17}\right)^{2s} +  \left(\frac{e^{1/4}}{16}\right)^{2s} \left(\frac{1}{2^{2s}-1}\right)\right). \\
\end{eqnarray*}
Now using the  fact that $0.669\leq s_0=\dim_{\mathcal{H}}(J_{\{1,2,4\}})\leq 0.67$, we get that 
\[
5^{2s}\left(\left(\frac{e^{1/8}}{9}\right)^{2s}  +\left(\frac{e^{3/16}}{17}\right)^{2s} +  \left(\frac{e^{1/4}}{16}\right)^{2s} \left(\frac{1}{2^{2s}-1}\right)\right)< 0.984<1,
\]
which gives the desired inequality. 
\end{proof} 

Using the previous theorem it is now easy to prove Theorem \ref{thm2}. 
\begin{proof}[Proof of Theorem \ref{thm2}]
Suppose that  $q\geq 3$ and $k\geq 1$. To prove assertion (i) we first note that we can take $F= I_k=\{1,\ldots,q^k\}$ in Theorem \ref{En*} and conclude that $\mu^k<\nu^k$. To see that $(\mu^k,\nu^k)\cap \mathrm{DS}(P^*_q)=\emptyset$ we argue by contradiction. So, suppose that  $F\subseteq P^*_q$ is such that $\mu^k<\dimh(J_F)<\nu^k$. We claim that $\{1,\ldots,q^{k-1}\}\subset F$, as otherwise $F\subseteq P^*_q\setminus\{q^m\}$ for some $m\leq k-1$. In that case we get that $\dimh(J_F)\leq \mu^m\leq \mu^k$ by Proposition \ref{prop:increasing}, which is impossible. As  $\{1,\ldots,q^{k-1}\}\subset F$ and $\dimh(J_F)<\nu^k$, we know that $q^k\not\in F$. Thus, $F\subseteq P^*_q\setminus\{q^k\}$, which contradicts the fact that $\mu^k<\dimh(J_F)$. 

To prove assertion (ii) let $F\subset P^*_q$ be finite with $\nu^k<\dimh(J_F)<\mu^{k+1}$. Then $\{1,\ldots,q^k\}\subset F$, as otherwise $F\subset P^*_q\setminus\{q^m\}$ for some $m\leq k$, which would imply that $\dimh(J_F)\leq \mu^m<\nu^m\leq \nu^k$. As $\mu^k<\nu^k$ for all $k\geq 1$, we can combine Lemma \ref{nwdense} and Theorem \ref{En*} and conclude that $\mathrm{DS}(P^*_q)$ is nowhere dense in $(\nu^k,\mu^{k+1})$ for $k\geq 1$.  (Note that  $\alpha_k=\mu^{k-1}$ and $\beta_k =\nu^{k-1}$ for $k\geq 1$.) 

The proof for $q=2$ can be derived in the same way from Theorem \ref{En*} and Lemma \ref{nwdense}.

\end{proof}
\section{Proof of Theorem \ref{thm3}}	

\begin{proof}[Proof of Theorem \ref{thm3}]
Let $0< s< \frac{\ln 2}{2\ln q}$. To show that $s$ is in the dimension spectrum we verify the condition in Lemma \ref{break point}.
So, suppose that $F\subset P_q^*$ is finite with strict break point say $q^{k_0}$ for $(F,s)$. Let $v_s$ be the strictly positive eigenvector of $L_{s,F\cup \{q^{k_0}\}}$ with eigenvalue $\lambda_s = r(L_{s,F\cup \{q^{k_0}\}})\geq 1$ and let $T=\{q^k\colon k>k_0\}$. Set $T_m =\{q^{k_0+j}\colon 1\leq j\leq m\}$. 

We know from Theorem \ref{thm:FN} that $v_s$ is decreasing on $[0,1]$. Using this fact we find that for $x\in [0,1]$, 
\begin{eqnarray*}
L_{s,F\cup T_m} v_s)(x) & = & (L_{s,F}v_s)(x) + \sum_{j=1}^m \left(\frac{1}{q^{k_0+j}+x}\right)^{2s} v_s\left(\frac{1}{q^{k_0+j}+x}\right)\\
 & \geq & (L_{s,F}v_s)(x) + \left(\frac{1}{q^{k_0}+x}\right)^{2s} v_s\left(\frac{1}{q^{k_0}+x}\right) \sum_{j=1}^m \left(\frac{q^{k_0}+x}{q^{k_0+j}+x}\right)^{2s} \\
 & \geq & (L_{s,F}v_s)(x) + \left(\frac{1}{q^{k_0}+x}\right)^{2s} v_s\left(\frac{1}{q^{k_0}+x}\right) \sum_{j=1}^m \left(\frac{1}{q^{j}}\right)^{2s}.
\end{eqnarray*}
As $s<\frac{\ln 2}{2\ln q}$, we know that $\frac{1}{q^{2s}-1}>1$, hence there exists an $M$ such that $ \sum_{j=1}^{M} \left(\frac{1}{q^{j}}\right)^{2s}>1$.  So,  there exists a $\lambda >1$ such that for $x\in [0,1]$,
\[
(L_{s,F\cup T_{M}} v_s)(x) > (L_{s,F}v_s)(x) + \lambda \left(\frac{1}{q^{k_0}+x}\right)^{2s} v_s\left(\frac{1}{q^{k_0}+x}\right). 
\]
Now using Lemma \ref{scaling} we conclude that there exists $\mu>1$ such that $L_{s,F\cup T_{M}} v_s \geq \mu \lambda_sv_s(x)\geq  \mu v_s$, hence 
$r(L_{s,F\cup T_{M}}) \geq \mu>1$ by Lemma \ref{basic}. This implies that $\dim_{\mathcal{H}}(J_{F\cup T})\geq \dim_{\mathcal{H}}(J_{F\cup T_{M}})> s$ by Theorem \ref{thm:FN}. 

To complete the proof note that clearly $0$ is in the dimensions spectrum, but also $\frac{\ln 2}{2\ln q}$, as the dimensions spectrum is closed by \cite[Theorem 1.2]{CLU1}. 
\end{proof}

\section{The dimension spectrum of $M_q$: proof of Theorem \ref{Mq}}
We will first prove the final statement in  Theorem \ref{Mq}. In fact, we will show that the following  general condition on $A\subseteq \mathbb{N}$  implies that its dimension spectrum is a finite  union of disjoint nontrivial closed intervals. 

\begin{definition} Given an infinite set $A=\{a_1,a_2,\ldots\}\subseteq \mathbb{N}$ with $a_1<a_2<\ldots$, we say that $A$ has a {\em critical break point value} $k^*$  if for each $t\in\mathrm{DS}(A)$ with $0<t<\dimh(J_A)$ and each finite set $F\subset A$ with a strict break point $a_m$ for $(F,t)$ and $m>k^*$ we have that 
\[
\dimh(J_{F\cup\{a_n\colon n>m\}})>t.
\]
\end{definition}

\begin{proposition}\label{critical}
If  $A=\{a_1,a_2,\ldots\}\subseteq \mathbb{N}$, with $a_1<a_2<\ldots$, has a critical break point value, then for each $s\in\mathrm{DS}(A)$ there exists a $\delta>0$ such that 
$[s-\delta,s]\subseteq \mathrm{DS}(A)$ or $[s,s+\delta]\subseteq \mathrm{DS}(A)$. 
\end{proposition}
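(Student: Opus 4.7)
The plan is to apply Lemma~\ref{break point} in a neighborhood of $s$, using the critical break point value $k^*$ to eliminate all but finitely many potential obstructions to its hypothesis. First, for each $m \geq 1$ and each finite $F \subseteq \{a_1, \ldots, a_{m-1}\}$, I set $T_m = \{a_n : n > m\}$ and define the \emph{obstruction interval}
\[
J_{F, m} = \bigl[\dim_{\mathcal{H}}(J_{F \cup T_m}),\, \dim_{\mathcal{H}}(J_{F \cup \{a_m\}})\bigr],
\]
taken to be empty when the left endpoint exceeds the right. Using the inclusion $\{a_{m+1}\} \subseteq T_m$, a direct verification shows that $J_{F,m}$ captures (up to possible boundary points) exactly those $t \in U := (0, \dim_{\mathcal{H}}(J_A))$ for which $a_m$ is a strict break point for $(F, t)$ and the hypothesis of Lemma~\ref{break point} fails at $t$ through the pair $(F, m)$.

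Next, I partition these potential obstructions by the size of $m$: put $O_1 = \bigcup_{m \leq k^*} \bigcup_F J_{F, m}$ and $O_2 = \bigcup_{m > k^*} \bigcup_F J_{F, m}$. Since there are only finitely many pairs $(F, m)$ with $m \leq k^*$, the set $O_1$ is a finite union of closed intervals. Lemma~\ref{break point} applied directly gives $U \setminus (O_1 \cup O_2) \subseteq \mathrm{DS}(A)$. Conversely, if some $t \in \mathrm{DS}(A) \cap U$ were to lie in $O_2$, then $t$ would witness a pair $(F, m)$ with $m > k^*$, $a_m$ a strict break point for $(F, t)$, and $t \geq \dim_{\mathcal{H}}(J_{F \cup T_m})$, contradicting the defining property of $k^*$. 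Thus we obtain the sandwich
\[
U \setminus (O_1 \cup O_2) \ \subseteq\ \mathrm{DS}(A) \cap U \ \subseteq\ U \setminus O_2.
\]

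Finally, I fix $s \in \mathrm{DS}(A) \cap U$ and extract a one-sided closed neighborhood of $s$ inside $\mathrm{DS}(A)$. Since $O_1$ has only finitely many components, there exists $\epsilon > 0$ such that $(s - \epsilon, s + \epsilon)$ meets $O_1$ in at most one component, with $s$ either outside $O_1$, in the interior of that component, or at one of its endpoints. Combined with the sandwich, a short case analysis on the position of $s$ relative to this at-most-one component of $O_1$ produces the required one-sided neighborhood, provided one can show that $O_2$ does not cluster at $s$ from both sides. To rule out bilateral clustering, I would pick a sequence $t_n \to s$ with $t_n \in J_{F_n, m_n} \subseteq O_2$ and pass to a subsequence: if the pairs $(F_n, m_n)$ take only finitely many values, then by pigeonhole some fixed $(F, m)$ with $m > k^*$ satisfies $s \in J_{F, m} \subseteq O_2$, contradicting $s \in \mathrm{DS}(A)$ via the sandwich; whereas if $m_n \to \infty$, one uses structural estimates on $\dim_{\mathcal{H}}(J_{F \cup T_m})$ and $\dim_{\mathcal{H}}(J_{F \cup \{a_m\}})$ for large $m$ to show that the intervals $J_{F_n, m_n}$ either shrink to a single limit value or lie entirely on one side of $s$.

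The main obstacle is precisely this last step, and especially the case $m_n \to \infty$: controlling the positions and diameters of the obstruction intervals $J_{F_n, m_n}$ for large $m_n$ tightly enough to preclude bilateral accumulation at $s$. I expect this to be handled by invoking Lemma~\ref{Fcupb}, which quantifies how adding a large element changes the Hausdorff dimension, together with the continuity and monotonicity of the spectral radius from Theorem~\ref{thm:FN} and Theorem~\ref{infdim}, to force the intervals $J_{F_n, m_n}$ to concentrate in a way incompatible with two-sided clustering at a point of $\mathrm{DS}(A)$.
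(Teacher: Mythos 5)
Your approach is genuinely different from the paper's, and it contains a gap that is not the one you flagged.

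First, the part you worried about is actually vacuous: with your definitions, $O_2 = \emptyset$. Indeed, for any finite $F \subseteq \{a_1,\ldots,a_{m-1}\}$ and $m > k^*$, the left endpoint $t^* := \dim_{\mathcal H}(J_{F\cup T_m})$ of $J_{F,m}$ lies in $\mathrm{DS}(A)$ and satisfies $0 < t^* < \dim_{\mathcal H}(J_A)$. If $J_{F,m}$ were nonempty, i.e. $t^* \le \dim_{\mathcal H}(J_{F\cup\{a_m\}})$, then $a_m$ would be a strict break point for $(F,t^*)$ (since $\dim_{\mathcal H}(J_F) < t^*$, $\dim_{\mathcal H}(J_{F\cup\{a_{m+1}\}}) < t^*$, and $t^* \le \dim_{\mathcal H}(J_{F\cup\{a_m\}})$), and the critical break point property would give $\dim_{\mathcal H}(J_{F\cup T_m}) > t^*$, a contradiction. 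So your $O_2$ is empty before you ever reach the $m_n \to \infty$ analysis; the bilateral-clustering worry never arises.

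The genuine gap is in the $O_1$ part, which you dismiss as "a short case analysis." After the above observation, your sandwich collapses to $U \setminus O_1 \subseteq \mathrm{DS}(A) \cap U \subseteq U$, and this tells you nothing about $\mathrm{DS}(A)$ inside $O_1$. But $\mathrm{DS}(A) \cap O_1$ is nonempty in general (each left endpoint $\dim_{\mathcal H}(J_{F\cup T_m})$ belongs to it), and an $s \in \mathrm{DS}(A)$ lying in the \emph{interior} of a component of $O_1$ is not touched by the sandwich at all: you cannot conclude from $U\setminus O_1 \subseteq \mathrm{DS}(A)$ that such an $s$ has a one-sided neighborhood inside $\mathrm{DS}(A)$. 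That is exactly the hard case, and ruling it out requires an argument that actually looks at a witness set for $s$. The paper does this directly: given $F$ with $\dim_{\mathcal H}(J_F)=s$, it chooses $m > k^*$ with $a_m > \max F$ (respectively, truncates an infinite $F$ to $F'=\{a_k \in F : k < m\}$), notes that $t_1 := \dim_{\mathcal H}(J_{F\cup\{a_k : k\ge m\}}) > s$ (respectively $s_0 := \dim_{\mathcal H}(J_{F'}) < s \le s_1 := \dim_{\mathcal H}(J_{F'\cup\{a_k : k\ge m\}})$), and then, for each $t$ strictly between, iteratively builds $F_1 \subset F_2 \subset \cdots$ with strict break points of index $> k^*$ — the critical break point property guarantees each step can proceed — and concludes $t \in \mathrm{DS}(A)$ via Lemma~\ref{break point seq}. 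This produces the one-sided interval directly, without any attempt to describe the "obstruction set" globally, and in particular handles $s$ regardless of where it sits relative to your $O_1$.
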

\begin{proof}
Let $s\in \mathrm{DS}(A)$ and  $F\subseteq A$ with $\dimh(J_F) = s$.  Suppose first that $F$ is finite. Take $m>k^*$ such that $a_m>\max F$, where $k^*$ is the critical break point value for $A$.  Set $t_1=\dimh(J_{F\cup\{a_k\colon k\geq m\}})>s$. We will show that each  $s<t<t_1$ is in $\mathrm{DS}(A)$.  As $t_1>s$, we know from Theorem \ref{approx} that  either $\dimh(J_{F\cup\{a_m\}})\geq t$, in which case we set $F_1=F$,  or, there exists  a $k_1\geq m$ such that $F_1=F \cup\{a_{m},\ldots, a_{k_1}\}$ satisfies 
\[
\dimh(J_{F_1})<t\mbox{\qquad and\qquad }\dimh(J_{F_1\cup\{a_{k_1+1}\}})\geq t.
\]
In both cases we find that $(F_1,t)$ has a strict break point, say $a_{m_1}$, with $m_1\geq m$.  Now using  that  $m_1>k^*$ we see that $\dimh(J_{F_1\cup\{a_k\colon k>m_1\}}) >t$. It again follows from Theorem \ref{approx} that there exists a $k_2>m_1$ such that $F_2=F_1 \cup\{a_{m_1+1},\ldots, a_{k_2}\}$ satisfies 
\[
\dimh(J_{F_2})<t\mbox{\qquad and\qquad }\dimh(J_{F_2\cup\{a_{k_2+1}\}})\geq t.
\]
Let $a_{m_2}$ be a strict break point for $(F_2,t)$.  Again, as $m_2>k^*$, we have that $\dimh(J_{F_2\cup\{a_k\colon k>m_2\}})>t$. Thus, by Theorem \ref{approx}  there exists a $k_3>m_2$ such that $F_3=F_2 \cup\{a_{m_2+1},\ldots, a_{k_3}\}$ satisfies 
\[
\dimh(J_{F_3})<t\mbox{\qquad and\qquad }\dimh(J_{F_3\cup\{a_{k_3+1}\}})\geq t.
\]
Let $a_{m_3}$ be a strict break point for $(F_3,t)$. By repeating this process we find a nested sequence of sets $F_1\subset F_2\subset \ldots\subset A$ with $\max F_n<\max F_{n+1}$  and strict break points $a_{m_n}$ for $(F_n,t)$ for each $n$. It now follows from Lemma \ref{break point seq} that $t\in\mathrm{DS}(A)$. 

In the case where $F$ is infinite we take $m>k^*$ such that  $F' = \{a_k\in F\colon k< m\}$ is nonempty, so $s_0:=\dimh(J_{F'})<s$. Set  $s_1=\dimh(J_{F'\cup\{a_k\colon k\geq m\}})\geq s$.  Then using exactly the same reasoning as in the first case  with $F'$ instead of $F$ it can be  shown that each $s_0<t<s_1$ is in $\mathrm{DS}(A)$.
\end{proof}

\begin{theorem}\label{intervals}
If  $A=\{a_1,a_2,\ldots\}\subseteq \mathbb{N}$ with $a_1<a_2<\ldots$ has a critical break point value, then $\mathrm{DS}(A)$ is the disjoint union of finitely many nontrivial closed intervals. 
\end{theorem}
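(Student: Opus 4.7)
The plan is to derive the explicit identity
\[
\mathrm{DS}(A) \;=\; \bigcup_{F\subseteq I_{k^*}}\bigl[\dimh(J_F),\ \dimh(J_{F\cup T_{k^*}})\bigr],
\]
a union of at most $2^{k^*}$ closed intervals, and then read off the theorem by consolidating it into its maximal connected components. Let $D=\{\dimh(J_G):G\subseteq A,\ |G|<\infty\}$. Since $\mathrm{DS}(A)$ is closed by \cite[Theorem 1.2]{CLU1} and Theorem \ref{approx} expresses every infinite-set dimension as a limit of finite-set dimensions, $\mathrm{DS}(A)=\overline{D}$. Partition $D=\bigsqcup_{F\subseteq I_{k^*}} D_F$ according to intersection with $I_{k^*}$, where
\[
D_F=\{\dimh(J_G):G\subseteq A,\ |G|<\infty,\ G\cap I_{k^*}=F\}.
\]

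The key intermediate claim will be that for every $F\subseteq I_{k^*}$,
\[
\overline{D_F}=\bigl[\dimh(J_F),\ \dimh(J_{F\cup T_{k^*}})\bigr].
\]
The inclusion ``$\subseteq$'' follows from monotonicity: any finite $G$ with $G\cap I_{k^*}=F$ satisfies $F\subseteq G\subseteq F\cup T_{k^*}$. For ``$\supseteq$'', I would re-run the construction in the finite-case proof of Proposition \ref{critical} with base set $F$ and the choice $m=k^*+1$ (legitimate because $\max F\leq a_{k^*}<a_{k^*+1}$): for any $t\in(\dimh(J_F),\ \dimh(J_{F\cup T_{k^*}}))$ that argument produces a nested sequence of finite sets $F_n$ with $F\subseteq F_n\subseteq F\cup T_{k^*}$, $\dimh(J_{F_n})<t$, and strict break points $a_{m_n}$ for $(F_n,t)$ with $m_n>k^*$. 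Theorem \ref{approx} combined with Lemma \ref{break point seq} then give $\dimh(J_{F_n})\to t$, and because each $F_n\cap I_{k^*}=F$ we obtain $t\in\overline{D_F}$. The endpoint $\dimh(J_{F\cup T_{k^*}})$ also lies in $\overline{D_F}$ by Theorem \ref{approx} applied to the exhaustion $F\cup\{a_{k^*+1},\ldots,a_{k^*+n}\}$.

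Granting this claim, $\mathrm{DS}(A)=\overline{D}=\bigcup_F\overline{D_F}$ yields the displayed identity. Each constituent interval is nontrivial because $T_{k^*}$ is infinite and adjoining any one of its elements to $F$ strictly increases $\dimh$ (by Lemma \ref{Fcupb} when $F\neq\emptyset$, and directly because $\dimh(J_{T_{k^*}})>0$ when $F=\emptyset$). Collapsing the at most $2^{k^*}$ intervals into their maximal connected components produces the disjoint union of finitely many nontrivial closed intervals asserted by the theorem.

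The main subtlety will be the bookkeeping in the ``$\supseteq$'' step: one must check that the iterative construction never adjoins elements of $I_{k^*}\setminus F$ and that every break point invoked has index exceeding $k^*$, which is precisely where the critical break point value hypothesis is used. Both facts follow because each new batch of elements appended to $F_n$ is drawn from $\{a_k:k>m_{n-1}\}\subseteq T_{k^*}$, and the initial extension of $F$ lies inside $T_{k^*}$ by the choice $m=k^*+1$; this forces $m_1>k^*$ and inductively $m_n>k^*$ for all $n$.
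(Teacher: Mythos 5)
Your proof is correct and takes a genuinely different route from the paper. The paper proves the theorem in two steps: first, via Proposition \ref{critical}, every connected component of $\mathrm{DS}(A)$ is a closed nontrivial interval (since $\mathrm{DS}(A)$ is closed); second, a contradiction argument shows there are only finitely many components -- if there were infinitely many left endpoints $\alpha_i = \dimh(J_{F_i})$, then since only finitely many finite subsets live inside $I_{k^*}$, some $F_j$ contains an element $a_{j_2}$ with $j_2>k^*$ and a smaller element, and re-running the Proposition~\ref{critical} construction from $F = F_j\cap\{a_k\colon k<j_2\}$ fills in the interval $(\dimh(J_F),\alpha_j)$, contradicting that $\alpha_j$ is a left endpoint. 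Your proof instead derives the closed-form identity $\mathrm{DS}(A)=\bigcup_{F\subseteq I_{k^*}}[\dimh(J_F),\dimh(J_{F\cup T_{k^*}})]$ directly, by partitioning the finite-subset dimensions $D$ by the trace on $I_{k^*}$, showing each $\overline{D_F}$ is exactly the displayed interval (the interior via the Proposition~\ref{critical} iteration starting at $m=k^*+1$, which indeed keeps all adjoined elements in $T_{k^*}$ and forces every break-point index $m_n>k^*$), and using finiteness of the index set $\{F\subseteq I_{k^*}\}$ to interchange closure with the union. This is cleaner than the paper's argument and buys you more: it yields an explicit bound of $2^{k^*}$ on the number of constituent intervals, and an explicit description of them, whereas the paper's contradiction argument establishes finiteness without any quantitative information. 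Two small points worth being explicit about: first, the definition of a critical break point value as stated in the paper restricts $t$ to lie in $\mathrm{DS}(A)$, which would make both your argument and the paper's appear circular; what is actually proved in Theorem~\ref{Mqcritical} (and what both proofs implicitly use) is the stronger statement for all $0<t<\dimh(J_A)$, so the definition should be read that way. Second, for the degenerate block $F=\emptyset$ the iteration never lands in the ``$F_1=F$'' branch because $\dimh(J_{\{a_{k^*+1}\}})=0$; it always proceeds to a nonempty $F_1\subseteq T_{k^*}$, after which Lemma~\ref{Fcupb} guarantees strict break points, so the argument goes through but the bookkeeping deserves a sentence.
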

\begin{proof} We know from Proposition \ref{critical} that each connected component of $\mathrm{DS}(A)$ is a closed nontrivial interval, as $\mathrm{DS}(A)$ is closed, see \cite[Theorem 1.2]{CLU1}.   It remains to show that it only has  finitely many connected components. Suppose by way of contradiction that it consists of infinitely many connected components, say $[\alpha_i,\beta_i]$ for $i\in I$. Let $F_i\subset A$ be such that $\dimh(J_{F_i}) = \alpha_i$. Note that for $\alpha_0:=0$ and $|F_0|=1$. For each other $i\in I$ we have that $|F_i|\geq 2$. 

As there are infinitely many $F_i$'s we know there exists an $F_j$ containing $a_{j_1}<a_{j_2}$  with  $j_2>k^*$, where $k^*$ is the critical break point value of $A$. 
Now let $F= F_j\cap \{a_k\colon k<j_2\}$ and set $s_0 = \dimh(J_F)<\alpha_j$ and $s_1=\dimh(J_{F\cup\{a_n\colon n\geq j_2\}})\geq \alpha_j$. To get the contradiction we now use the same argument as in the proof of Proposition \ref{critical} to show that each $s_0<t<\alpha_j$ is in $\mathrm{DS}(A)$. 

As $s_0<t<\alpha_j\leq s_1$, we know from Theorem \ref{approx} that  either $\dimh(J_{F\cup\{a_{j_2}\}})\geq t$, in which case we set $F_1=F$,  or, there exists  a $k_1\geq j_2$ such that $F_1=F \cup\{a_{j_2},\ldots, a_{k_1}\}$ satisfies 
\[
\dimh(J_{F_1})<t\mbox{\qquad and\qquad }\dimh(J_{F_1\cup\{a_{k_1+1}\}})\geq t.
\]
In both cases we find that $(F_1,t)$ has a strict break point, say $a_{m_1}$, with $m_1\geq j_2$.  As $m_1>k^*$, we know that $\dimh(J_{F_1\cup\{a_k\colon k>m_1\}}) >t$. It now follows from Theorem \ref{approx} that there exists a $k_2>m_1$ such that $F_2=F_1 \cup\{a_{m_1+1},\ldots, a_{k_2}\}$ satisfies 
\[
\dimh(J_{F_2})<t\mbox{\qquad and\qquad }\dimh(J_{F_2\cup\{a_{k_2+1}\}})\geq t.
\]
Let $a_{m_2}$ be a strict break point for $(F_2,t)$.  Iteratively repeating this process yields a nested sequence of sets $F_1\subset F_2\subset \ldots\subset A$ with $\max F_n<\max F_{n+1}$  and a strict break points $a_{m_n}$ for $(F_n,t)$ for each $n$. It now follows from Lemma \ref{break point seq} that $t\in\mathrm{DS}(A)$, which contradicts the fact that $[\alpha_j,\beta_j]$ is a connected component of $\mathrm{DS}(A)$.
\end{proof}
We will see that $M_q$ has a critical break point value for $q\geq 11$, namely $k^*=2q$. To show this we need an upper bound for $\dimh(J_{M_q})$ for $q\geq 11$. The following bound, which is not very sharp,  will be sufficient for our purposes. 
\begin{lemma}\label{upperMq} For $q\geq 11$ we have that $\dimh(J_{M_q})\leq \frac{2}{\sqrt{q}}$. 
\end{lemma}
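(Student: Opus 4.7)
The plan is to test the positive bounded operator $L_{s, M_q}\colon C([0,1])\to C([0,1])$ at $s = 2/\sqrt q$ on the explicit positive function
\[
v_s(x) = \left(\frac{1}{\lambda + x}\right)^{2s}, \qquad \lambda = \frac{1+\sqrt 5}{2},
\]
namely the eigenvector of $L_{s,\{1\}}$ supplied by Lemma~\ref{eigenvector}, show that $L_{s, M_q} v_s \leq v_s$ pointwise on $[0,1]$, and invoke Lemma~\ref{basic} together with Theorem~\ref{infdim} to conclude $\dim_{\mathcal H}(J_{M_q}) \leq 2/\sqrt q$. Note that $s = 2/\sqrt q > 1/(2q) \geq \sigma_0$ for every $q \geq 1$, so $L_{s,M_q}$ is well-defined as a bounded linear operator on $C([0,1])$.

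Each term of $L_{s, M_q}v_s(x)$ simplifies via the identity
\[
\left(\frac{1}{n^q + x}\right)^{2s} v_s\!\left(\frac{1}{n^q + x}\right) = \bigl[\lambda(n^q + x) + 1\bigr]^{-2s}
\]
used in the proof of Lemma~\ref{eigenvector}. For $n = 1$, the defining relation $\lambda^2 = \lambda + 1$ makes the right hand side equal \emph{exactly} to $\lambda^{-2s} v_s(x)$, recovering the eigenvalue relation for $L_{s,\{1\}}$. For $n \geq 2$, the crude bound $\lambda(n^q + x) + 1 \geq \lambda n^q$ gives an upper bound $\lambda^{-2s} n^{-2qs}$ per term, so
\[
L_{s, M_q} v_s(x) \leq \lambda^{-2s}\, v_s(x) + \lambda^{-2s}\sum_{n \geq 2} n^{-2qs}.
\]
Since $v_s$ is decreasing on $[0,1]$ with minimum $v_s(1) = (\lambda+1)^{-2s} = \lambda^{-4s}$, the inequality $L_{s, M_q} v_s \leq v_s$ reduces to the scalar estimate
\[
\sum_{n \geq 2} n^{-2qs} \leq \lambda^{-2s} - \lambda^{-4s}
\]
at $s = 2/\sqrt q$, which is all that remains to verify.

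The last step is to check this scalar inequality for every integer $q \geq 11$. An integral comparison controls the left hand side by $2\cdot 2^{-4\sqrt q}$, so it decays superexponentially in $\sqrt q$. For the right hand side $\lambda^{-4/\sqrt q}\bigl(1 - \lambda^{-4/\sqrt q}\bigr)$, the bound $4/\sqrt q \leq 4/\sqrt{11} < 1.21$ yields $\lambda^{-4/\sqrt q} > 1/2$, and the elementary inequality $1 - e^{-u} \geq u/2$ on $[0,1]$ produces a lower bound of order $1/\sqrt q$. The main obstacle here is not analytic but merely the bookkeeping to make the comparison uniform in $q$; a convenient route is to verify the base case $q = 11$ directly (where the two sides differ by about three orders of magnitude) and then check monotonicity of LHS/RHS in $q$ by comparing logarithmic derivatives: that of $\sum_{n\geq 2}n^{-4\sqrt q}$ is of order $1/\sqrt q$, whereas that of $\lambda^{-4/\sqrt q}(1-\lambda^{-4/\sqrt q})$ is only of order $1/q$, so the ratio is decreasing past $q=11$ and the single numerical base case suffices.
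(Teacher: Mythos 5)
Your proof is correct, and the core idea is the same as the paper's: test the explicit Fibonacci eigenvector $v_s(x) = (\lambda+x)^{-2s}$ of $L_{s,\{1\}}$ at the candidate value $s = 2/\sqrt{q}$, bound the tail $\sum_{n\geq 2}$ termwise, and conclude $\dimh(J_{M_q}) \leq 2/\sqrt{q}$ via the spectral-radius criterion. Where you diverge is in the bookkeeping and the final verification. You use the crude bound $\lambda(n^q+x)+1 \geq \lambda n^q$, which gives a \emph{constant} upper bound on each tail term, and then divide by the worst-case value $v_s(1)=\lambda^{-4s}$ to reduce everything to the scalar inequality $\sum_{n\geq 2} n^{-2qs} \leq \lambda^{-2s}-\lambda^{-4s}$; this costs a factor $\lambda^{4s}$ compared to the paper's sharper bound $\sum_{n\geq 2}(\lambda/(n^q+\lambda))^{2s}$, but the slack is enormous (three orders of magnitude at $q=11$, as you note), so nothing is lost. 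For the final verification of the scalar inequality uniformly in $q\geq 11$, the paper reformulates its bound as a function $h(x)$ and proves $h$ is \emph{increasing} on $[11,\infty)$ with $h(x)\to 1$, whereas you verify the base case $q=11$ numerically and argue that the ratio LHS/RHS decreases by comparing logarithmic derivatives ($\sim 1/\sqrt q$ versus $\sim 1/q$). Both routes work; yours is described a bit informally in the last paragraph but is sound and could be tightened easily (e.g., LHS $\leq 2\cdot 2^{-4\sqrt q}$ while RHS $\geq (\ln\lambda)/\sqrt q$ via $1-e^{-u}\geq u/2$, so it suffices that $2\sqrt q\, 2^{-4\sqrt q}\leq \ln\lambda$, which is trivially monotone in $q$). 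One other small difference: the paper works with the finite truncations $L_{s,M_q^k}$ and passes to the limit via Theorem~\ref{approx}, whereas you work directly with the infinite-alphabet operator $L_{s,M_q}$ and invoke Lemma~\ref{infradius} and Theorem~\ref{infdim}; both are legitimate given that $s = 2/\sqrt q > (2q)^{-1} \geq \sigma_0$, as you correctly observe.
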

\begin{proof}
Let $q\geq 11$ and $\frac{1}{2q}<s\leq \frac{2}{\sqrt{q}}$. For $k>2$ set $M_q^k= \{1,2^q,\ldots, k^q\}$. Using the positive eigenvector $v_s$ of $L_{s,\{1\}}$ with eigenvalue $\lambda^{-2s}$, where $\lambda = (1+\sqrt{5})/2$ from Lemma \ref{eigenvector}, we find that 
\[
L_{s,M_q^k}v_s(x) = \lambda^{-2s}\left( 1+\sum_{n=2}^k \left(\frac{\lambda+x}{n^q +x+\lambda -1}\right)^{2s}\right)v_s(x) \leq  \lambda^{-2s}\left( 1+\sum_{n=2}^k \left(\frac{\lambda+1}{n^q +\lambda }\right)^{2s}\right)v_s(x).
\]
As $\lambda^{-1}+1= \lambda$,
\begin{equation}\label{mu(s)}
\begin{split}
\lambda^{-2s}\left( 1+\sum_{n=2}^k \left(\frac{\lambda+1}{n^q +\lambda }\right)^{2s}\right) =  \lambda^{-2s}+\sum_{n=2}^k \left(\frac{\lambda}{n^q +\lambda }\right)^{2s}
\leq 
\lambda^{-2s}+\left(\frac{\lambda}{2^q}\right)^{2s} + \lambda^{2s}\int_{2}^\infty \frac{1}{x^{2qs}}\mathrm{d}x 
\\= \lambda^{-2s}+\left(\frac{\lambda}{2^q}\right)^{2s}\left( 1+\frac{2}{2qs-1}\right)=:\mu(s).
\end{split}
\end{equation}
Our goal is to show that $\mu(s)<1$ for $s=2/\sqrt{q}$ and $q\geq 11$. To establish this inequality set  
\[
h(x) := \lambda^{-4/\sqrt{x}}+\left(\frac{\lambda}{2^x}\right)^{4/\sqrt{x}}\left( 1+\frac{2}{4\sqrt{x}-1}\right)
\]
for $x\geq 1$. We need to show that $h(x)<1$ for all $x\geq 11$. Since $h(x)\to 1$ as $x\to\infty$, it suffices to show that $h$ is strictly increasing for $x\geq 11$. 

A direct computation gives
\[
h'(x) = \frac{2\ln \lambda}{x\sqrt{x}} \lambda^{-4/\sqrt{x}} + \left(\frac{\lambda}{2^x}\right)^{4/\sqrt{x}}\left[\left(1+\frac{2}{4\sqrt{x} -1}\right)\left( -\frac{2\ln 2}{\sqrt{x}} -\frac{2\ln \lambda}{x\sqrt{x}}\right) - \frac{4}{\sqrt{x}(4\sqrt{x}-1)^2}\right].
\]
To prove that $h'(x)>0$ for all $x\geq 11$, we show that
\[
 \frac{x\sqrt{x}}{2\ln \lambda} \left(\frac{2^x}{\lambda}\right)^{4/\sqrt{x}}h'(x)>0\mbox{\qquad for }x\geq 11.
\]
Note that 
\begin{equation*}
\begin{split}
 \frac{x\sqrt{x}}{2\ln \lambda} \left(\frac{2^x}{\lambda}\right)^{4/\sqrt{x}}h'(x) =   \lambda^{-8/\sqrt{x}}2^{4\sqrt{x}} - 
 \left[\left(1+\frac{2}{4\sqrt{x} -1}\right)\left( \frac{\ln 2}{\ln \lambda}x +1\right) +\frac{2x}{\ln \lambda(4\sqrt{x}-1)^2}\right]\\
 \geq  \lambda^{-8/\sqrt{11}}2^{4\sqrt{x}} - \left[\left(1+\frac{2}{4\sqrt{11} -1}\right)\left( \frac{\ln 2}{\ln \lambda}x +1\right) +\frac{2}{\ln \lambda(16-8/\sqrt{11})}\right]=:g(x),
\end{split}
\end{equation*}
and  a direct computation gives $g(11)>0$. Using the derivative of $g$ it is straightforward to verify that $g$ is an increasing function for $x\geq 11$, so $h'(x)>0$ for all  
$x\geq 11$. 

Thus, if we take $s=2/\sqrt{q}$ for $q\geq 11$ in (\ref{mu(s)}) we find that $\mu(s)<1$. This implies that $r(L_{s,M_q^k})\leq \mu(s)<1$, hence $\mathrm{dim}_H(J_{M_q^k})<s$ for all $k$ and $q\geq 11$ by Theorem \ref{thm:FN}. It now follows from Theorem \ref{approx} that $\dimh(J_{M_q})\leq s$ for $s=2/\sqrt{q}$.  
\end{proof}

Let us now show that $M_q$ has a critical break point value  for $q\geq 11$. 
\begin{theorem}\label{Mqcritical}
The set $M_q$ has a critical break point value $k^*=2q$ for $q\geq 11$. 
\end{theorem}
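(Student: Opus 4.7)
The plan is to follow the strategy from the proof of Theorem \ref{thm1}. Fix $t\in\mathrm{DS}(M_q)$ with $0<t<\dimh(J_{M_q})$ and a finite $F\subset M_q$ with strict break point $m^q$ satisfying $m>2q$; set $T=\{n^q:n>m\}$. The goal is to show $\dimh(J_{F\cup T})>t$. The strict break point hypothesis together with Theorem \ref{thm:FN} supplies a strictly positive eigenvector $v_s$ of $L_{s,F\cup\{m^q\}}$ at $s=t$ with eigenvalue $\lambda_s=r(L_{s,F\cup\{m^q\}})\geq 1$. I would then aim to show $L_{s,F\cup T}v_s\geq \mu v_s$ for some $\mu>1$; Lemma \ref{basic} would then yield $r(L_{s,F\cup T})>1$, and Theorem \ref{infdim} would conclude $\dimh(J_{F\cup T})>t$.

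A case split on $t$ versus $\sigma_0=1/(2q)$ is needed, since $L_{s,F\cup T}$ is only defined for $s>1/(2q)$. When $t\leq 1/(2q)$ the conclusion is immediate and bypasses the operator argument: as $\sum_{n>m}n^{-2qs}$ diverges as $s\to (1/(2q))^+$, Lemma \ref{basic} applied with $w\equiv 1$ shows $r(L_{s,T})\to \infty$ in that limit. By the strict monotonicity from Lemma \ref{infradius} and the characterisation of Theorem \ref{infdim}, this forces $\dimh(J_T)>1/(2q)\geq t$, hence $\dimh(J_{F\cup T})\geq \dimh(J_T)>t$.

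For the main case $t>1/(2q)$, I would expand
\[
(L_{s,F\cup T}v_s)(x)-\lambda_s v_s(x)=\sum_{n>m}\left(\frac{1}{n^q+x}\right)^{2s}v_s\!\left(\frac{1}{n^q+x}\right)-\left(\frac{1}{m^q+x}\right)^{2s}v_s\!\left(\frac{1}{m^q+x}\right).
\]
Monotonicity of $v_s$ (Theorem \ref{thm:FN}(iv)) lets me replace each $v_s(1/(n^q+x))$ in the tail by the smaller value $v_s(1/(m^q+x))$. Since $(m^q+x)/(n^q+x)$ is increasing in $x$ for $n>m$, the resulting ratio is minimised at $x=0$ and bounded below by the $x$-independent quantity $\sum_{n>m}(m/n)^{2qs}$. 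Combining this with the Hölder-type estimate (\ref{ineqv_s}) to compare $v_s(1/(m^q+x))$ with $v_s(x)$ reduces the problem to showing that $\sum_{n>m}(m/n)^{2qs}\geq 1+c$ for some definite $c>0$ uniform in the admissible pairs $(s,m)$; this then delivers $L_{s,F\cup T}v_s(x)\geq (1+\delta)v_s(x)$ for some $\delta>0$.

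The main obstacle, and the reason for the threshold $q\geq 11$, is precisely this tail estimate. Applying the integral bound $\sum_{n\geq m+1}n^{-2qs}\geq (m+1)^{1-2qs}/(2qs-1)$, the inequality $(m/(m+1))^{2qs}\geq e^{-2qs/m}$, the constraint $m\geq 2q+1$, and the upper bound $s<\dimh(J_{M_q})\leq 2/\sqrt{q}$ from Lemma \ref{upperMq}, the worst case collapses to the explicit numerical inequality
\[
(2q+2)\,e^{-2/\sqrt{q}}>4\sqrt{q}-1\qquad\text{for all }q\geq 11.
\]
This holds with a narrow but strictly positive margin at $q=11$ and a comfortable margin for $q\geq 12$; it fails for smaller $q$, which is exactly why the theorem requires $q\geq 11$ and why the natural choice of critical break point value is $k^*=2q$.
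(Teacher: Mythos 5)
Your proposal is correct and follows essentially the same route as the paper: pass to the eigenvector $v_s$ of $L_{s,F\cup\{m^q\}}$, use its monotonicity to reduce the strict break-point condition to the tail estimate $\sum_{n>m}(m/n)^{2qs}>1$, and then exploit $m\geq 2q+1$ together with the upper bound $\dimh(J_{M_q})\leq 2/\sqrt{q}$ from Lemma \ref{upperMq} to verify this numerically for $q\geq 11$. The paper works with finite truncations $H_m$ of the tail (so it does not need your explicit case split on $t$ versus $1/(2q)$) and keeps the sharper factor $\left(\frac{2q+1}{2q+2}\right)^{4\sqrt{q}}$ rather than your further-weakened $e^{-2/\sqrt{q}}$, but both inequalities hold for $q\geq 11$ and the argument is in substance the same.
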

\begin{proof}
Suppose that $s\in\mathrm{DS}(A)$ with $0<s<\dimh(J_A)$ and $q\geq 11$. Let $k_0^q$ be a strict break point for    $(F,s)$, where  $F\subset A$ is a  finite set and  $k_0>2q$.
Let $H_m=F\cup\{k^q\colon k_0<k\leq m\}$ for $m>k_0$.  Consider the operator $L_{s,F\cup\{k_0^q\}}$ with positive eigenvector $v_s$ and eigenvalue $r( L_{s,F\cup\{k_0^q\}})\geq 1$, as $k_0^q$ is a strict break point for $(F,s)$. Then 
\begin{eqnarray*}
L_{s,H_m}v_s(x) & = & L_{s,F}v_s(x) + \sum_{k=k_0+1}^m \left(\frac{1}{k^q+x}\right)^{2s} v_s\left(\frac{1}{k^q+x}\right)\\
& \geq  &
L_{s,F}v_s(x) +  \left(\frac{1}{k_0^q+x}\right)^{2s} v_s\left(\frac{1}{k_0^q+x}\right)\sum_{k=k_0+1}^m\left(\frac{k_0^q}{k^q}\right)^{2s},
\end{eqnarray*}
as $v_s$ is decreasing by Theorem \ref{thm:FN} and $\frac{k_0^q+x}{k^q+x}\geq \frac{k_0^q}{k^q}$ for all $x\in [0,1]$ and $k\geq k_0$.  

We will now show that $\sum_{k=k_0+1}^m \left(\frac{k_0^q}{k^q}\right)^{2s}>1$ for all $m$ sufficiently large. Note that 
\[
\sum_{k=k_0+1}^\infty\left(\frac{k_0^q}{k^q}\right)^{2s} \geq k_0^{2qs}\int_{k_0+1}^\infty x^{-2qs}\mathrm{d}x = \left(\frac{k_0}{k_0+1}\right)^{2qs}\frac{k_0+1}{2qs-1}.
\]
The right-hand side is an increasing function in $k_0$. So, as $k_0>2q$ and $s<\dimh(J_{M_q})\leq 2/\sqrt{q}$ by Lemma \ref{upperMq}, we find that 
 \[
\sum_{k=k_0+1}^\infty\left(\frac{k_0^q}{k^q}\right)^{2s}\geq \left(\frac{2q+1}{2q+2}\right)^{2qs}\frac{2q+2}{2qs-1}\geq  \left(\frac{2q+1}{2q+2}\right)^{4\sqrt{q}}\frac{2q+2}{4\sqrt{q}-1}=:\tau(q).
\]
We will show that $\tau(q)>1$ for all $q\geq 11$. 
Note that the function $g(x)=  \left(\frac{2x+1}{2x+2}\right)^{4\sqrt{x}}$ has derivative 
\[
g'(x) = \left(\frac{2x+1}{2x+2}\right)^{4\sqrt{x}}\left(\frac{2}{\sqrt{x}}\left(\ln\left(\frac{2x+1}{2x+2}\right) \right) +4\sqrt{x}\left(\frac{2}{2x+1}-\frac{2}{2x+2}\right)\right).
\]
As
\begin{eqnarray*}
 \frac{2}{\sqrt{x}}\left(\ln\left(\frac{2x+1}{2x+2}\right) \right) +4\sqrt{x}\left(\frac{2}{2x+1}-\frac{2}{2x+2}\right)
        & \geq &   -\frac{2}{\sqrt{x}}\left(\frac{1}{2x+1}\right) +4\sqrt{x}\left(\frac{2}{2x+1}-\frac{2}{2x+2}\right)\\
        & = &   \left(\frac{-2\sqrt{x}}{x(2x+1)}\right) +\left(\frac{8\sqrt{x}}{(2x+1)(2x+2)}\right)\\
        & = &   \left(\frac{-2\sqrt{x}}{x(2x+1)}\right) +\left(\frac{2\sqrt{x}}{(2x+1)(x/2+1/2)}\right)\geq 0\\
\end{eqnarray*}
for $x\geq 1$, $g'$ is nonnegative, hence $g$  is an increasing function. Also the function $x\mapsto \frac{2x+2}{4\sqrt{x}-1}$ is increasing for $x\geq 11$. Thus $\tau(q)\geq \tau(11)\geq 1.112$ for all $q\geq 11$. 

It follows that for all $m$ sufficiently large that 
\[
\sum_{k=k_0+1}^m\left(\frac{k_0^q}{k^q}\right)^{2s}>1.
\]
Thus there exists a $\mu>1$ such that 
\[
L_{s,H_m}v_s(x)\geq L_{s,F}v_s(x) + \mu  \left(\frac{1}{k_0^q+x}\right)^{2s} v_s\left(\frac{1}{k_0^q+x}\right)
\]
for all $m$ large. 
Using Lemma \ref{scaling} we conclude that there exists a $\lambda>1$ such that $L_{s,H_m}v_s(x)\geq \lambda v_s(x)$, hence $r(L_{s,H_m})\geq \lambda >1$ for all $m$ sufficiently large by Lemma \ref{basic}. It now follows from Theorem \ref{thm:FN} that $\dimh(J_{H_m})>s$ for all $m$ sufficiently large, so $\dimh(J_{F\cup\{k^q\colon k>k_0\}})>s$, which completes the proof.
\end{proof}

As a consequence we find that the final assertion in Theorem \ref{Mq} holds for $q\geq 11$. 
\begin{corollary}
For each $q\geq 11$ we have that $\mathrm{DS}(M_q)$ is the disjoint union of finitely many nontrivial closed intervals. 
\end{corollary}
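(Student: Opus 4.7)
The plan is to observe that the corollary is a direct consequence of the two main structural results already assembled in this section: Theorem \ref{intervals}, which translates the existence of a critical break point value for $A\subseteq\mathbb{N}$ into the conclusion that $\mathrm{DS}(A)$ decomposes as a finite disjoint union of nontrivial closed intervals, and Theorem \ref{Mqcritical}, which identifies $k^*=2q$ as such a critical break point value for $M_q$ whenever $q\geq 11$. So the skeleton of the argument is a single application: for fixed $q\geq 11$, invoke Theorem \ref{Mqcritical} to obtain a critical break point value for $M_q$, and then apply Theorem \ref{intervals} with $A=M_q$ to deduce that $\mathrm{DS}(M_q)$ is the disjoint union of finitely many nontrivial closed intervals.

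There is essentially no calculation to carry out in the corollary itself; all of the analytic content has been packaged into the two preceding results. In particular, the nontriviality of each interval (i.e.\ that no connected component reduces to a point) already comes from Proposition \ref{critical} inside the proof of Theorem \ref{intervals}: every point of $\mathrm{DS}(A)$ is one-sided accessible from another point of $\mathrm{DS}(A)$, so no isolated points occur. Closedness of $\mathrm{DS}(M_q)$ is ensured by \cite[Theorem 1.2]{CLU1}, as used in Theorem \ref{intervals}, and finiteness of the number of connected components comes from the contradiction argument in Theorem \ref{intervals} using any finite set $F_j$ that reaches beyond $k^*$.

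The main obstacle, already resolved in Theorem \ref{Mqcritical}, was to verify the critical break point condition: for every strict break point $k_0^q$ of any pair $(F,s)$ with $k_0>2q$, one must show $\dimh(J_{F\cup\{k^q:k>k_0\}})>s$. This required the upper bound $\dimh(J_{M_q})\leq 2/\sqrt{q}$ from Lemma \ref{upperMq} together with the estimate
\[
\sum_{k=k_0+1}^{\infty}\!\left(\tfrac{k_0^q}{k^q}\right)^{2s}\;\geq\;\left(\tfrac{2q+1}{2q+2}\right)^{4\sqrt{q}}\,\tfrac{2q+2}{4\sqrt{q}-1}\;>\;1,
\]
combined with Lemmas \ref{scaling} and \ref{basic} to upgrade a single strict break point to a strict increase of the spectral radius past $1$. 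With those in hand, the corollary follows with no further work, which is exactly what the proof should do: state the two invocations and indicate that the constant $2/\sqrt{q}$ bound, together with the choice $k^*=2q$, are the only ingredients beyond the general machinery.
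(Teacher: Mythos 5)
Your proposal is correct and takes exactly the same route as the paper: combine Theorem \ref{Mqcritical} (which gives the critical break point value $k^*=2q$ for $M_q$ when $q\geq 11$) with Theorem \ref{intervals}. The additional commentary you give about what lies inside those two theorems is accurate but not needed for the corollary itself.
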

\begin{proof}
Simply combine Theorems \ref{Mqcritical} and \ref{intervals}.
\end{proof}
To complete the proof of Theorem \ref{Mq} we need to establish the first four assertions concerning  $\mathrm{DS}(M_q)$ where $1\leq q\leq 10$.  We will use the following crude upper bounds for $\dimh(J_{M_q})$, which are easy to obtain,  but  sufficient for our purposes. 
\begin{lemma}\label{ubounds}
We have that  
\begin{align*}
\dimh(J_{M_2}) &\leq 0.67,  & \dimh(J_{M_3})&\leq 0.485,&\dimh(J_{M_4})&\leq 0.38,\\
 \dimh(J_{M_5})&\leq 0.31, & \dimh(J_{M_6})&\leq 0.265, & \dimh(J_{M_7})&\leq 0.234,\\
 \dimh(J_{M_8})&\leq 0.21, & \dimh(J_{M_9})&\leq 0.191, & \dimh(J_{M_{10}})&\leq 0.175.
 \end{align*} 
\end{lemma}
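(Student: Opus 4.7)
The plan is to adapt the argument of Lemma \ref{upperMq} to each $q\in\{2,\ldots,10\}$. The obstruction in that lemma was that the crude integral tail bound is too weak when $q$ is small, so I would compensate by splitting off a handful of leading terms of the series and only bounding the remaining tail by an integral. As in the proof of Lemma \ref{upperMq}, I start from the positive eigenvector $v_s(x)=(\lambda+x)^{-2s}$ of $L_{s,\{1\}}$, with $\lambda=(1+\sqrt 5)/2$ and eigenvalue $\lambda^{-2s}$ (Lemma \ref{eigenvector}), and note that since $v_s$ is decreasing,
\[
L_{s,M_q^k}v_s(x)\;\leq\;\Bigl(\lambda^{-2s}+\sum_{n=2}^{k}\Bigl(\tfrac{\lambda+1}{n^q+\lambda}\Bigr)^{2s}\Bigr)v_s(x)\;=\;\Bigl(\lambda^{-2s}+\sum_{n=2}^{k}\Bigl(\tfrac{\lambda}{n^q+\lambda}\Bigr)^{2s}\Bigr)v_s(x),
\]
using $\lambda^{-1}+1=\lambda$ as in (\ref{mu(s)}). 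Write $M_q^k=\{1,2^q,\ldots,k^q\}$ and set
\[
u_q(s):=\lambda^{-2s}+\sum_{n=2}^{\infty}\Bigl(\tfrac{\lambda}{n^q+\lambda}\Bigr)^{2s}.
\]

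The objective is to verify, for each $q$ and the proposed value $s_q$ from the statement, that $u_q(s_q)<1$. Picking a moderate cutoff $N_q\geq 2$ and using $\frac{\lambda}{n^q+\lambda}\leq\frac{\lambda}{n^q}$ on the tail together with $\sum_{n>N_q}n^{-2qs}\leq\int_{N_q}^{\infty}x^{-2qs}\,dx$, I would bound
\[
u_q(s)\;\leq\;\lambda^{-2s}+\sum_{n=2}^{N_q}\Bigl(\tfrac{\lambda}{n^q+\lambda}\Bigr)^{2s}+\frac{\lambda^{2s}\,N_q^{1-2qs}}{2qs-1}.
\]
This requires $2qs_q>1$, which is comfortably satisfied at every claimed bound (e.g.\ $q=2$, $s_q=0.67$ gives $2qs_q=2.68$). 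For each $q\in\{2,\ldots,10\}$ I then choose $N_q$ large enough that the displayed upper bound is strictly less than $1$ at $s=s_q$. Once this is established, Lemma \ref{basic} gives $r(L_{s_q,M_q^k})\leq u_q(s_q)<1$ uniformly in $k$, Theorem \ref{thm:FN} then yields $\dimh(J_{M_q^k})<s_q$ for every $k$, and Theorem \ref{approx} upgrades this to $\dimh(J_{M_q})\leq s_q$.

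The main (and only) obstacle is the arithmetic verification that the above bound really falls below $1$ at the stated $s_q$, since for the smaller exponents, notably $q=2$ and $q=3$, the slow decay of $n^{-2qs}$ means the single-term integral estimate used in Lemma \ref{upperMq} is insufficient and one must carry several explicit summands. In practice a handful of terms suffices (for instance $N_2$ around $5$ already leaves comfortable room below $1$, and for $q\geq 3$ progressively fewer terms are needed), so the check reduces to a short table of finite numerical computations rather than any structural argument. Because the upper bounds $s_q$ in the statement are deliberately loose, one has a healthy margin at each $q$, and the same template — eigenvector reduction, split sum, integral tail — handles all nine cases uniformly.
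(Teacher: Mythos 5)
Your proposal follows the paper's own route: test the explicit eigenvector $v_s(x)=(\lambda+x)^{-2s}$ of $L_{s,\{1\}}$, reduce to a scalar series, carry a few leading terms explicitly and bound the tail by $\int_N^\infty x^{-2qs}\,\mathrm{d}x$, then apply Lemma \ref{basic}, Theorem \ref{thm:FN} and Theorem \ref{approx} --- the paper does precisely this with $n=2,3,4$ explicit plus the integral from $4$ for every $q\in\{2,\ldots,10\}$. One small slip: in your first display the factor $\lambda^{-2s}$ must multiply the \emph{entire} bracket $\bigl(1+\sum_{n\geq 2}(\tfrac{\lambda+1}{n^q+\lambda})^{2s}\bigr)$, not appear as a first summand, since the asserted equality between $\sum(\tfrac{\lambda+1}{n^q+\lambda})^{2s}$ and $\sum(\tfrac{\lambda}{n^q+\lambda})^{2s}$ is false; the identity you want, which does give your $u_q(s)$, is $\lambda^{-2s}\bigl(1+\sum(\tfrac{\lambda+1}{n^q+\lambda})^{2s}\bigr)=\lambda^{-2s}+\sum(\tfrac{\lambda}{n^q+\lambda})^{2s}$ via $\lambda^{-1}+1=\lambda$.
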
  
\begin{proof}
For $m\geq 1$ let $M^m_q=\{1^q,2^q,\ldots, m^q\}$. Let $v_s(x) = (\lambda +x)^{-2s}$ be the eigenvector of $L_{s,\{1\}}$ given  in Lemma \ref{eigenvector} with eigenvalue $\lambda^{-2s}$, where $\lambda = (1+\sqrt{5})/2$. Then 
\begin{eqnarray*} 
L_{s,M_q^m}v_s(x) & = & \lambda^{-2s}v_s(x) +\sum_{n\geq 2}^m \left(\frac{1}{n^q+x}\right)^{2s}v_s\left(\frac{1}{n^q+x}\right)\\
 & = & \lambda^{-2s}\left(\frac{1}{\lambda + x}\right)^{2s} +\sum_{n\geq 2}^m \left(\frac{1}{n^q+x}\right)^{2s}\left(\frac{1}{\lambda + (n^q+x)^{-1}}\right)^{2s}\\
 & \leq & \lambda^{-2s}\left( 1+\sum_{n\geq 2}^m \left(\frac{\lambda +x}{n^q+x}\right)^{2s}\right)v_s(x)\\
& \leq & \lambda^{-2s}\left( 1+\sum_{n\geq 2}^m \left(\frac{\lambda +1}{n^q+1}\right)^{2s}\right)v_s(x)\\
& \leq & \lambda^{-2s}\left( 1+ \left(\frac{\lambda+1}{2^q +1}\right)^{2s} + \left(\frac{\lambda+1}{3^q +1}\right)^{2s} + \left(\frac{\lambda+1}{4^q +1}\right)^{2s} + (\lambda+1)^{2s}\int_4^\infty x^{-2qs}\mathrm{d}x\right)v_s(x)\\
& = & \lambda^{-2s}\left( 1+ \left(\frac{\lambda+1}{2^q +1}\right)^{2s} + \left(\frac{\lambda+1}{3^q +1}\right)^{2s}+ \left(\frac{\lambda+1}{4^q +1}\right)^{2s}  + \frac{(\lambda+1)^{2s}}{2qs-1}4^{-2qs+1}\right)v_s(x).  
\end{eqnarray*}
Now set 
\[
\alpha(q,s) =  \lambda^{-2s}\left( 1+ \left(\frac{\lambda+1}{2^q +1}\right)^{2s} + \left(\frac{\lambda+1}{3^q +1}\right)^{2s} + \left(\frac{\lambda+1}{4^q +1}\right)^{2s} + \frac{(\lambda+1)^{2s}}{2qs-1}4^{-2qs+1}\right).
\]
Note that if $\alpha(q,s)<1$, then  $r(L_{s,M_q^m})<1$ for all $m$, hence $\dimh(J_{M^m_q})< s$ for all $m$ by Theorem \ref{thm:FN}. This implies that $\dimh(J_{M_q})\leq s$ by Theorem \ref{approx}. 
Using a calculator we find that 
\begin{align*}
\alpha(2,0.67)&<0.986, & \alpha(3,0.485)&<0.967, &\alpha(4,0.38)&<0.975,\\
\alpha(5,0.31)&<0.995, & \alpha(6,0.265)&<0.99985, & \alpha(7,0.234)&<0.9983,\\
\alpha(8,0.21)&<0.998, & \alpha(9,0.191)&<0.998, & \alpha(10,0.175)&<0.9989.
\end{align*}
\end{proof}
We should mention that the following much sharper bound, $\dimh(J_{M_2}) <0.59825579$, can be found in \cite[Table 1]{CLU2}.

To begin we  show that the dimension spectrum of $M_q$ is full for $q\in \{1, 2, 3, 4,5\}$, which is statement (i) in Theorem \ref{Mq}. 
\begin{theorem} \label{q=234} For $q\in\{1, 2, 3, 4,5\}$ we have that $\mathrm{DS}(M_q) = [0,\dimh(J_{M_q})]$. 
\end{theorem}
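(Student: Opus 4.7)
My plan is to apply Lemma \ref{break point}. For $q\in\{1,2\}$ the result is already established (the Texan conjecture \cite{KZ} handles $q=1$, and $q=2$ is \cite[Theorem 1.3]{CLU2}), so I focus on $q\in\{3,4,5\}$. Fix $0<s<\dimh(J_{M_q})$ and consider a finite $F\subset M_q$ with strict break point $k_0^q$ for $(F,s)$. I must show $\dimh(J_{F\cup T})>s$, where $T=\{k^q\colon k>k_0\}$.

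Following the template of the proof of Theorem \ref{thm1}, let $v_s$ be the strictly positive eigenvector of $L_{s,F\cup\{k_0^q\}}$ with eigenvalue $\lambda_s\geq 1$, and set $H_m=F\cup\{(k_0+j)^q\colon 1\leq j\leq m\}$. Using that $v_s$ is decreasing on $[0,1]$ by Theorem \ref{thm:FN} and $(k_0^q+x)/((k_0+j)^q+x)\geq k_0^q/(k_0+j)^q$ for $x\in[0,1]$, I obtain
\[
(L_{s,H_m}v_s)(x)\geq (L_{s,F}v_s)(x)+\Big(\tfrac{1}{k_0^q+x}\Big)^{\!2s}v_s\!\Big(\tfrac{1}{k_0^q+x}\Big)\sum_{j=1}^m\Big(\tfrac{k_0}{k_0+j}\Big)^{\!2qs}.
\]
If the tail series $\Sigma(k_0,q,s):=\sum_{k>k_0}(k_0/k)^{2qs}$ exceeds $1$, then combining with the eigenvector identity $(L_{s,F}v_s)(x)+(1/(k_0^q+x))^{2s}v_s(1/(k_0^q+x))=\lambda_s v_s(x)$ and applying Lemmas \ref{scaling} and \ref{basic} yields $r(L_{s,H_m})>1$ for $m$ large, so $\dimh(J_{F\cup T})\geq\dimh(J_{H_m})>s$ by Theorem \ref{thm:FN}.

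The main technical step is then to verify $\Sigma(k_0,q,s)>1$. Since $\Sigma$ is increasing in $k_0$ and decreasing in $s$, it suffices to check the inequality at the largest admissible $s$ for each $k_0$, namely $s=\dimh(J_{\{1^q,\ldots,(k_0-1)^q,k_0^q\}})$, corresponding to the maximal $F$ with $\max F<k_0^q$. Using the bounds in Table \ref{table} (supplemented by Lemma \ref{ubounds}), a direct numerical estimate confirms $\Sigma(k_0,q,s)>1$ for every $q\in\{3,4\}$ with $k_0\geq 2$ and for $q=5$ with $k_0\geq 3$.

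The main obstacle is the single case $q=5$, $k_0=2$, where the analogous numerics give $\Sigma(2,5,s)\approx 0.99$ at $s=\dimh(J_{\{1,2^5\}})$, so the operator argument just barely fails. This case must be handled by a different route. Here $\max F<2^5=32$, combined with $\dimh(J_{F\cup\{32\}})\geq s>0$, forces $F=\{1\}$ (since $F=\emptyset$ makes $J_{\{32\}}$ a single point of dimension $0$). Then $s\leq\dimh(J_{\{1,32\}})\leq 0.243377$ by Table \ref{table}, while the same table gives $\dimh(J_{\{1,3^5,\ldots,100^5\}})\geq 0.243455$. Monotonicity of Hausdorff dimension under inclusion together with $F\cup T\supset\{1,3^5,\ldots,100^5\}$ then yield $\dimh(J_{F\cup T})\geq 0.243455>s$, and Lemma \ref{break point} completes the proof.
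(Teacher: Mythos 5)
Your proposal is correct and follows essentially the same approach as the paper's proof: apply Lemma \ref{break point}, bound $L_{s,H_m}v_s$ from below using monotonicity of the positive eigenvector $v_s$ of $L_{s,F\cup\{k_0^q\}}$, reduce to the tail-sum criterion $\sum_{k>k_0}(k_0/k)^{2qs}>1$, and dispatch the critical borderline case $q=5$, $k_0=2$ by identifying $F=\{1\}$ and comparing $\dimh(J_{\{1,2^5\}})$ directly with $\dimh(J_{\{1,3^5,\ldots,100^5\}})$ via Table \ref{table}. The only cosmetic difference is that you cite \cite{KZ} and \cite{CLU2} for $q\in\{1,2\}$ rather than running the same operator argument uniformly as the paper does (the paper checks $\gamma(1,2,1)>1$ and uses $\dimh(J_{M_2})\leq 0.67$ for these cases), but this is an entirely legitimate shortcut.
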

\begin{proof} Given $0< s<\dimh(J_{M_q})$, we will use Lemma \ref{break point} to show that $s\in \mathrm{DS}(M_q)$. Let $n_0^q$ be a strict break point for $(F,s)$, so $n_0>1$. We know that the operator $L_{s, F\cup\{n_0^q\}}$ has a positive eigenvector $v_s$ with eigenvalue $\lambda_s=r(L _{s, F\cup\{n_0^q\}}) \geq 1$. For $m>n_0$, let  $T_{m}=\{(n_0+1)^q, \ldots, m^q\}$ and set $H_m=F\cup T_m$. Then 
\begin{eqnarray*} 
L_{s,H_m}v_s(x) & = & L_{s,F}v_s(x) + \sum_{k=n_0+1}^m \left( \frac{1}{k^q+x}\right)^{2s}v_s\left( \frac{1}{k^q+x}\right)\\
      & \geq & L_{s,F}v_s(x) + \left( \frac{1}{n_0^q+x}\right)^{2s}v_s\left( \frac{1}{n_0^q+x}\right)\sum_{k=n_0+1}^m \left( \frac{n_0^q}{k^q}\right)^{2s},\\
\end{eqnarray*}
as $v_s$ is decreasing by  Theorem \ref{thm:FN}(iv). Set 
\begin{equation}\label{eq:gamma}
\gamma_m=\sum_{k=n_0+1}^m \left( \frac{n_0^q}{k^q}\right)^{2s}.
\end{equation}
If $0<s\leq (2q)^{-1}$, the sum diverges as $m\to\infty$, hence there exists an $M> n_0$ such that $\gamma_M>1$. This implies that there exists a $\mu>1$ such that 
\[
L_{s,H_M}v_s(x)\geq L_{s,F}v_s(x) + \gamma_M\left( \frac{1}{n_0^q+x}\right)^{2s}v_s\left( \frac{1}{n_0^q+x}\right) \geq \mu L_{s,F\cup\{n_0^q\}}v_s(x) \geq \mu v_s(x)
\]  
by Lemma \ref{scaling}. Thus, $r(L_{s,H_M})> 1$, which implies that $\dimh(J_{F\cup \{k^q\colon k>n_0\}}) \geq \dimh(J_{F\cup H_M})> s$, so $s\in \mathrm{DS}(M_q)$ by Lemma \ref{break point}. 

Now if $(2q)^{-1}<s<\dimh(J_{M_q})$, then we consider the following estimate: 
\begin{equation}\label{beta}
\begin{split}
\sum_{k=n_0+1}^\infty \left( \frac{n_0^q}{k^q}\right)^{2s} & \geq   \left(\frac{n_0}{n_0+1}\right)^{2qs} +  \left(\frac{n_0}{n_0+2}\right)^{2qs}+ \left(\frac{n_0}{n_0+3}\right)^{2qs}+n_0^{2qs}\int_{n_0+4}^\infty x^{-2qs}\mathrm{d}x \\
 & = \left(\frac{n_0}{n_0+1}\right)^{2qs} +  \left(\frac{n_0}{n_0+2}\right)^{2qs}+  \left(\frac{n_0}{n_0+3}\right)^{2qs}+ \left(\frac{n_0}{n_0+4}\right)^{2qs} \frac{n_0+4}{2qs-1}=:\gamma(q,n_0,s).
\end{split}
\end{equation} 
Reasoning as above, it suffices to prove that $\gamma(q,n_0,s)>1$. Note that $\gamma(q,n_0,s)$ is decreasing in $s$, and increasing in $n_0$. 

We first consider the case $q=1$. For $n_0\geq  2$ we have that $\gamma(1,n_0,s)\geq \gamma(1,2,1) = 1369/900>1$.
Now consider the case $q=2$. By Lemma \ref{ubounds} we know that $s<\dimh(J_{M_2})\leq 0.67$, and for each $n_0\geq 3$ we have that 
\[
\gamma(2,n_0,s)\geq \gamma(2,3,0.67) \geq 1.3.
\] 
If $n_0=2$,  the estimate $s\leq \dimh(J_{\{1,2^2\}}) \leq 0.4112$ in Table \ref{table} gives  $\gamma(2,2,0.4112)\geq 2.5$. 

The next  case is $q=3$. By Lemma \ref{ubounds} we know that $s<\dimh(J_{M_3})\leq 0.485$, and for each $n_0\geq 3$ we have that 
\[
\gamma(3,n_0,s)\geq \gamma(3,3,0.485) \geq 1.1.
\] 
In case $n_0=2$, the estimate $s\leq \dimh(J_{\{1,2^3\}}) \leq 0.334$ in Table \ref{table} gives  $\gamma(3,2,0.334)\geq 1.5$. 

Now consider the case $q=4$. By Lemma \ref{ubounds} we know that $s<\dimh(J_{M_4})\leq 0.38$, and for each $n_0\geq 3$ we have that 
\[
\gamma(4,n_0,s)\geq \gamma(4,3,0.38) \geq 1.01.
\] 
For  $n_0=2$,  the estimate $s\leq \dimh(J_{\{1,2^4\}}) \leq 0.281$ in Table \ref{table} gives $\gamma(4,2,0.281)\geq 1.14$. 

Finally we need to check the case $q=5$. By Lemma \ref{ubounds} we know that $s<\dimh(J_{M_5})\leq 0.31$, and for each $n_0\geq 4$ we have that 
\[
\gamma(5,n_0,s)\geq \gamma(5,4,0.31) \geq 1.4.
\] 
For $n_0=3$, we have that  $s\leq \dimh(J_{\{1,2^5, 3^5\}}) \leq 0.273$ from Table \ref{table}, which gives $\gamma(5,3,0.273)\geq 1.25$. 
If $n_0=2$, then we cannot use  $\gamma(5,n_0,s)$ and need a different argument. If $n_0=2$, then $F=\{1\}$, hence it suffices to know that 
$\dimh(J_{\{1,2^5\}})<\dimh(J_{M_5\setminus\{2^5\}})$. From the estimates in Table \ref{table} we see that $\dimh(J_{\{1,2^5\}})<\dimh(J_{\{1,3^5,4^5,\ldots,100^5\}})\leq \dimh(J_{M_5\setminus\{2^5\}})$, which completes the proof.

\end{proof}

Next we prove the second statement  in  Theorem \ref{Mq}.
\begin{theorem} \label{q>=6} For $q\geq 6$ we have that 
\begin{equation}\label{ineq}
\dimh(J_{M_q\setminus\{2^q\}})<\dimh(J_{\{1,2^q\}})
\end{equation} 
and  $\mathrm{DS}(M_q)\cap (\dimh(J_{M_q\setminus\{2^q\}}), \dimh(J_{\{1,2^q\}}))$ is empty.
\end{theorem}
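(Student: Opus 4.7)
The plan is to prove the strict inequality $\dimh(J_{M_q\setminus\{2^q\}})<\dimh(J_{\{1,2^q\}})$ by a Perron--Frobenius argument patterned on Theorem \ref{En*}, and then to deduce the gap in the spectrum by a three-case analysis on subsets $B\subseteq M_q$.

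Set $F=\{1,2^q\}$ and $s_0=\dimh(J_F)$, and let $v$ be the positive eigenvector of $L_{s_0,F}$, so $L_{s_0,F}v=v$. Since $M_q\setminus\{2^q\}=\{1,3^q,4^q,\ldots\}$, applying (\ref{ineqv_s}) with $\gamma=1$ together with the monotonicity of $(2^q+x)/(n^q+x)$ in $x$ gives, for $n\geq 3$ and $x\in[0,1]$,
\[
L_{s_0,\{n^q\}}v(x)\leq e^{2s_0/2^q}\left(\frac{2^q+1}{n^q+1}\right)^{2s_0}L_{s_0,\{2^q\}}v(x).
\]
Summing over $n\geq 3$ yields $L_{s_0,M_q\setminus\{2^q\}}v\leq L_{s_0,\{1\}}v+\gamma(q,s_0)\,L_{s_0,\{2^q\}}v$, where
\[
\gamma(q,s_0):=e^{2s_0/2^q}\sum_{n\geq 3}\left(\frac{2^q+1}{n^q+1}\right)^{2s_0}.
\]
Whenever $\gamma(q,s_0)<1$, Lemma \ref{scaling} produces $\mu<1$ with $L_{s_0,M_q\setminus\{2^q\}}v\leq\mu(L_{s_0,\{1\}}v+L_{s_0,\{2^q\}}v)=\mu v$, so Lemma \ref{basic} gives $r(L_{s_0,M_q\setminus\{2^q\}})\leq\mu<1$, and then Lemma \ref{infradius} together with Theorem \ref{infdim} forces $\dimh(J_{M_q\setminus\{2^q\}})<s_0$.

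The main technical task is verifying $\gamma(q,s_0)<1$ for every $q\geq 6$. Since each summand is decreasing in $s_0$, it suffices to check the inequality at an available lower bound for $s_0=\dimh(J_{\{1,2^q\}})$: Table \ref{table} furnishes such bounds for $q\in\{6,\ldots,11\}$, and (\ref{est2^12}) gives $s_0\geq 1.525/q$ for $q\geq 12$. For $q\geq 12$ one has $2qs_0\geq 3.05$, and the integral comparison $\sum_{n\geq 3}n^{-2qs_0}\leq\int_2^\infty x^{-2qs_0}\,\mathrm{d}x$ produces the uniform bound
\[
\sum_{n\geq 3}\left(\frac{2^q+1}{n^q+1}\right)^{2s_0}\leq\frac{2(1+2^{-q})^{2s_0}}{2qs_0-1}<1,
\]
and $\gamma(q,s_0)<1$ follows because $e^{2s_0/2^q}$ is negligibly close to $1$. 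For $6\leq q\leq 11$ this crude integral estimate is too weak, so I peel off the initial terms $n=3,\ldots,N$ explicitly (about eight terms are needed for $q=6$, progressively fewer as $q$ grows) and bound the remaining tail by the integral comparison; this case-by-case numerical verification is the main obstacle.

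For the gap, take any $B\subseteq M_q$. If $\{1,2^q\}\subseteq B$, then $\dimh(J_B)\geq\dimh(J_{\{1,2^q\}})$. If $2^q\notin B$, then $B\subseteq M_q\setminus\{2^q\}$, so $\dimh(J_B)\leq\dimh(J_{M_q\setminus\{2^q\}})$. Finally, if $2^q\in B$ but $1\notin B$, then $B\subseteq M_q\setminus\{1\}$, and the map $\tau\colon M_q\setminus\{2^q\}\to M_q\setminus\{1\}$ defined by $\tau(1)=2^q$ and $\tau(n^q)=n^q$ for $n\geq 3$ is a non-decreasing bijection, so Proposition \ref{prop:increasing} gives $\dimh(J_{M_q\setminus\{1\}})\leq\dimh(J_{M_q\setminus\{2^q\}})$ and hence $\dimh(J_B)\leq\dimh(J_{M_q\setminus\{2^q\}})$. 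In every case $\dimh(J_B)\notin(\dimh(J_{M_q\setminus\{2^q\}}),\dimh(J_{\{1,2^q\}}))$, yielding the gap.
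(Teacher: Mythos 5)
Your proposal is correct and mirrors the paper's own proof: both parts use the eigenvector $v$ of $L_{s_0,\{1,2^q\}}$ together with the distortion bound~(\ref{ineqv_s}) and the monotonicity of $(2^q+x)/(n^q+x)$ to compare $L_{s_0,M_q\setminus\{2^q\}}v$ with $L_{s_0,\{1\}}v+L_{s_0,\{2^q\}}v$, then feed the estimate through Lemmas~\ref{scaling}, \ref{basic}, \ref{infradius} and Theorem~\ref{infdim}, and the gap is closed by the identical three-case analysis via Proposition~\ref{prop:increasing} (your version applies the non-decreasing bijection once at the set level, the paper applies it to the particular $F$, but that is the same idea). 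One small inaccuracy: you claim peeling off roughly eight terms is needed for $q=6$, whereas the paper's $\gamma(s,q)$, peeling off only $n=3,4$ and tail-integrating from $4$, already yields $\gamma(s_6,6)<0.96$, so your proposed numerical verification is more work than necessary but certainly sufficient.
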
 
\begin{proof}
For $q\geq 6$ set $s_q=\dimh(J_{\{1,2^q\}})$. We will first consider the case where $q\geq 12$. Recall that $s_q\geq 1.525/q>(2q)^{-1}$ by (\ref{est2^12}) for $q\geq 12$. Let $v_q$ be a positive eigenvector of $L_{s_q,\{1,2^q\}}$ with eigenvalue $1$. Set $H=M_q\setminus\{2^q\}$ and  note that $L_{s,H}$ is a bounded linear operator for all $s>(2q)^{-1}$.
Using  (\ref{ineqv_s}),
\[
\begin{split}
L_{s_q,H}v_q(x) & =  \left(\frac{1}{1+x}\right)^{2s_q}v_q\left(\frac{1}{1+x}\right) + \sum_{n=3}^\infty \left(\frac{1}{n^q+x}\right)^{2s_q}v_q\left(\frac{1}{n^q+x}\right)\\
  &\leq  \left(\frac{1}{1+x}\right)^{2s_q}v_q\left(\frac{1}{1+x}\right) + \left(\frac{1}{2^q+x}\right)^{2s_q}v_q\left(\frac{1}{2^q+x}\right)
  \sum_{n=3}^\infty \left(\frac{2^q+x}{n^q+x}\right)^{2s_q}e^{2s_q\left(\frac{1}{2^q+x}-\frac{1}{n^q+x}\right)}.
 \end{split}
\]

We will now show that $ \sum_{n=3}^\infty \left(\frac{2^q+x}{n^q+x}\right)^{2s_q}e^{2s_q\left(\frac{1}{2^q+x}-\frac{1}{n^q+x}\right)}<1$. 
Note that 
\begin{eqnarray*}
\sum_{n=3}^\infty \left(\frac{2^q+x}{n^q+x}\right)^{2s_q}e^{2s_q\left(\frac{1}{2^q+x}-\frac{1}{n^q+x}\right)}  & \leq & e^{\frac{2s_q}{2^q}} (2^q +1)^{2s_q}
\sum_{n=3}^\infty n^{-2s_qq}\\ 
&  \leq &   e^{\frac{2s_q}{2^q}} (2^q +1)^{2s_q}\int_2^\infty  x^{-2s_qq}\mathrm{d}x\\
  & =  & e^{\frac{2s_q}{2^q}} \left(1 +\frac{1}{2^q}\right)^{2s_q}\left(\frac{2}{2s_qq-1}\right)\\
  &\leq & \frac{2e^{\frac{4s_q}{2^q}}}{2s_qq-1},
\end{eqnarray*}
as $(1+1/n)^n\leq e$ for all $n$. 

The map $s\in ((2q)^{-1},1]\mapsto \frac{2e^{\frac{4s}{2^q}}}{2sq-1}$ is decreasing for all $q\geq 6$, as 
\[
\frac{\mathrm{d}}{\mathrm{d}s}\left(\frac{2e^{\frac{4s}{2^q}}}{2sq-1}\right)= \frac{4e^{\frac{4s}{2^q}}}{(2sq-1)^2}\left( (2sq-1)/2^{q-1} -q\right)\leq  \frac{4e^{\frac{4s}{2^q}}}{(2sq-1)^2}(q/2^{q-2} -q)<0.
\]
Moreover, the map $q\mapsto \frac{2e^{\frac{4s}{2^q}}}{2sq-1}$ is decreasing as well. 

Now using  (\ref{est2^12}), we find that  for $q\geq 12$ that 
\[
\frac{2e^{\frac{4s_q}{2^q}}}{2s_qq-1}\leq \frac{2e^{\frac{4(1.525)}{12\cdot 2^{12}}}}{2(1.525)-1}\leq 0.98<1. 
\]

This implies that 
\[
L_{s_q,H}v_q(x) \leq   \left(\frac{1}{1+x}\right)^{2s_q}v_q\left(\frac{1}{1+x}\right)  + 0.98\left(\frac{1}{2^q+x}\right)^{2s_q}v_q\left(\frac{1}{2^q+x}\right).
\]
By Lemma \ref{scaling} there exists a $\mu<1$ such that $L_{s_q,H}v_q\leq \mu L_{s_q,\{1,2^q\}}v_q =\mu v_q$, hence $r(L_{s_q,H})\leq \mu<1$. It now follows from Lemma \ref{infradius} and Theorem \ref{infdim} that $\dimh(F_H)<s_q=\dimh(J_{\{1,2^q\}})$, as $s_q>(2q)^{-1}$, which completes the proof for  $q\geq 12$.

To deal with the other cases we  use the bounds for $s_q=\dimh(J_{\{1,2^q\}})$ given in Table \ref{table} and the following refined estimate, 
\begin{eqnarray*}
\sum_{n=3}^\infty \left(\frac{2^q+x}{n^q+x}\right)^{2s_q}e^{2s_q\left(\frac{1}{2^q+x}-\frac{1}{n^q+x}\right)}  & \leq & 
	e^{\frac{2s_q}{2^q}} \left( \left(\frac{2^q+1}{3^q+1}\right)^{2s_q} +  \left(\frac{2^q+1}{4^q+1}\right)^{2s_q} +  (2^q+1)^{2s_q}\sum_{n=5}^\infty n^{-2s_qq}\right)\\
 & \leq & 
	e^{\frac{2s_q}{2^q}} \left( \left(\frac{2^q+1}{3^q+1}\right)^{2s_q} +  \left(\frac{2^q+1}{4^q+1}\right)^{2s_q} +  (2^q+1)^{2s_q}\int_4^\infty x^{-2s_qq}\mathrm{d}x\right)\\
	& = &e^{\frac{2s_q}{2^q}} \left( \left(\frac{2^q+1}{3^q+1}\right)^{2s_q} +  \left(\frac{2^q+1}{4^q+1}\right)^{2s_q} +  \left(\frac{2^q+1}{4^q}\right)^{2s_q}\left(\frac{4}{2s_qq-1}\right)\right).
\end{eqnarray*}
Set 
\[
\gamma(s,q) = e^{\frac{2s}{2^q}} \left( \left(\frac{2^q+1}{3^q+1}\right)^{2s} +  \left(\frac{2^q+1}{4^q+1}\right)^{2s} +  \left(\frac{2^q+1}{4^q}\right)^{2s}\left(\frac{4}{2sq-1}\right)\right).
\]
To complete the proof of  inequality (\ref{ineq}), we check for $q\in\{6,\ldots,11\}$   that $\gamma(s_q,q)<1$. 
Using the  upper and lower bounds in Table \ref{table} for $s_q=\dimh(J_{\{1,2^q\}})$ we see that $\gamma(s_{11},11)<0.63$, $\gamma(s_{10},10)<0.67$, $\gamma(s_{9},9)<0.72$, $\gamma(s_{8},8)<0.78$, $\gamma(s_{7},7)<0.85$, and  $\gamma(s_{6},6)<0.96$.

 To show for $q\geq 6$ that $\mathrm{DS}(M_q)\cap (\dimh(J_{M_q\setminus\{2^q\}}), \dimh(J_{\{1,2^q\}}))$ is empty, let $\dimh(J_{M_q\setminus\{2^q\}})<s< \dimh(J_{\{1,2^q\}})$. Suppose by way of contradiction that $\dimh(J_F) =s$ for some $F\subset M_q$. Note that if $2^q\not\in F$, then $F\subset M_q\setminus\{2^q\}$, hence 
 $s\leq \dimh(J_{M_q\setminus\{2^q\}})$, which is impossible. Thus, $2^q\in F$. Now if $1\not\in F$, then $G=(F\setminus\{2^q\})\cup \{1\}\subset M_q\setminus\{2^q\}$. So, Proposition \ref{prop:increasing} gives $s\leq \dimh(J_G)\leq  \dimh(J_{M_q\setminus\{2^q\}})$, which is impossible. So, $\{1,2^q\}\subseteq F$, hence  $\dimh(J_{\{1,2^q\}})\leq s$, which is a contradiction.
\end{proof}

Let us now prove the third statement in Theorem \ref{Mq}. 
\begin{theorem}
For $q\in \{6,7,8\}$ we have that 
\[
 \mathrm{DS}(M_q) = [0,\dimh(J_{M_q\setminus\{2^q\}})]\cup [\dimh(J_{\{1,2^q\}}),\dimh(J_{M_q})].
\] 
\end{theorem}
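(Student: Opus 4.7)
The plan is to verify the asserted equality by two inclusions. The inclusion $\mathrm{DS}(M_q) \subseteq [0,\dimh(J_{M_q\setminus\{2^q\}})]\cup [\dimh(J_{\{1,2^q\}}),\dimh(J_{M_q})]$ is immediate from Theorem \ref{q>=6}, which excludes any dimension in the middle open interval, together with the trivial $\mathrm{DS}(M_q)\subseteq [0,\dimh(J_{M_q})]$. It remains to show $\supseteq$, which splits into covering each of the two closed intervals.

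For the lower interval, I reduce to proving $\mathrm{DS}(M_q\setminus\{2^q\}) = [0,\dimh(J_{M_q\setminus\{2^q\}})]$, after which $M_q\setminus\{2^q\}\subset M_q$ yields the containment. Apply Lemma \ref{break point} to $A = M_q\setminus\{2^q\}$: any strict break point $a_{k_0}=n_0^q\in A$ for $(F,s)$ has $n_0\neq 2$, hence $n_0\geq 3$. The proof now mimics that of Theorem \ref{q=234}: for $s\leq (2q)^{-1}$ the series $\sum_{k>n_0}(n_0^q/k^q)^{2s}$ diverges, while for $s>(2q)^{-1}$ one uses the lower bound $\gamma(q,n_0,s)$ from (\ref{beta}), which is decreasing in $s$ and increasing in $n_0$. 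Combining the strict bound $\dimh(J_{M_q\setminus\{2^q\}})<\dimh(J_{\{1,2^q\}})$ from Theorem \ref{q>=6} with the numerical values for $\dimh(J_{\{1,2^q\}})$ from Table \ref{table}, a direct calculation yields $\gamma(q,3,\dimh(J_{\{1,2^q\}}))>1$ for $q\in\{6,7,8\}$; Lemmas \ref{basic} and \ref{scaling} then give $\dimh(J_{F\cup T})>s$ as required.

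For the upper interval, apply Lemma \ref{break point} to $A = M_q$ and $s\in(\dimh(J_{\{1,2^q\}}),\dimh(J_{M_q}))$, with the two endpoints handled by $\{1,2^q\}\subseteq M_q$ and trivially. For a finite $F\subset M_q$ with strict break point $n_0^q$, I case split on $n_0$. The case $n_0=2$ forces $F=\{1\}$ and is excluded by $s>\dimh(J_{\{1,2^q\}})$. The case $n_0=3$ forces $F\subseteq\{1,2^q\}$, and Proposition \ref{prop:increasing} applied to the non-decreasing bijections $\{1,2^q\}\to\{1,3^q\}$ and $\{1,2^q\}\to\{2^q,3^q\}$ eliminates $F=\{1\}$ and $F=\{2^q\}$, leaving only $F=\{1,2^q\}$. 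The case $n_0\geq 4$ reduces again to $\gamma(q,n_0,s)>1$, which by monotonicity in $n_0$ and the upper bounds on $\dimh(J_{M_q})$ from Lemma \ref{ubounds} follows from the direct numerical check $\gamma(q,4,\dimh(J_{M_q}))>1$.

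The main obstacle is the remaining subcase $n_0=3$, $F=\{1,2^q\}$, where $F\cup T=M_q\setminus\{3^q\}$. The strict break point condition gives $s\leq s_q:=\dimh(J_{\{1,2^q,3^q\}})$, so it suffices to establish the strict inequality $\dimh(J_{M_q\setminus\{3^q\}})>s_q$ for $q\in\{6,7,8\}$, which is precisely the statement that no gap arises at $3^q$ for these values (in contrast to $q\in\{9,10\}$ in assertion $(iv)$, where such a gap does appear). I establish this by an operator comparison mirroring the proof of Theorem \ref{q>=6}: let $v_q$ be the positive eigenvector of $L_{s_q,\{1,2^q,3^q\}}$ with eigenvalue $1$. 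Decomposing
\[
(L_{s_q,M_q\setminus\{3^q\}}v_q)(x) = v_q(x) - \left(\tfrac{1}{3^q+x}\right)^{2s_q}v_q\!\left(\tfrac{1}{3^q+x}\right) + \sum_{k=4}^{\infty}\left(\tfrac{1}{k^q+x}\right)^{2s_q}v_q\!\left(\tfrac{1}{k^q+x}\right),
\]
the monotonicity of $v_q$ from Theorem \ref{thm:FN}(iv) together with $(3^q+x)/(k^q+x)\geq 3^q/k^q$ reduces the required inequality to the scalar check $\sum_{k=4}^{\infty}(3^q/k^q)^{2s_q}>1$. Using the bounds on $s_q$ from Table \ref{table}, this is a direct numerical calculation — tightest at $q=8$, where partial sums up to $k=10$ augmented by an integral tail give slightly more than $1$. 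Lemma \ref{scaling} then upgrades this to $L_{s_q,M_q\setminus\{3^q\}}v_q\geq \mu v_q$ for some $\mu>1$, and Lemma \ref{infradius} together with Theorem \ref{infdim} conclude $\dimh(J_{M_q\setminus\{3^q\}})>s_q$, finishing the proof.
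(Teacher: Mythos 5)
Your proposal is correct and follows the same overall strategy as the paper: establish the $\subseteq$ inclusion via Theorem~\ref{q>=6}, and prove $\supseteq$ by verifying the Lemma~\ref{break point} criterion using the lower bound $\gamma(q,n_0,s)$ from~(\ref{beta}), split by cases on $n_0$. The one structural variation is in the upper-interval $n_0=3$ subcase: the paper simply feeds the monotonicity bound $s\leq\dimh(J_{\{1,2^q,3^q\}})$ into the generic $\gamma$ (resp.\ $\gamma'$ for $q=8$) estimate and invokes the Theorem~\ref{q=234} machinery, without pinning down $F$; whereas you first identify $F=\{1,2^q\}$ (so $F\cup T=M_q\setminus\{3^q\}$) and separately prove $\dimh(J_{M_q\setminus\{3^q\}})>s_q$ by an operator comparison against the eigenvector of $L_{s_q,\{1,2^q,3^q\}}$. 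Both routes reduce to the identical scalar check $\sum_{k\geq4}(3^q/k^q)^{2s_q}>1$, so this is a cosmetic repackaging rather than a genuinely different method; your extra step of explicitly excluding $F=\{1\}$ and $F=\{2^q\}$ via Proposition~\ref{prop:increasing} is valid but unnecessary in the paper's more uniform treatment.
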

\begin{proof}
 We will use Lemma \ref{break point}. Suppose  first that $s\in [\dimh(J_{\{1,2^q\}}),\dimh(J_{M_q})]$  and  $n_0^q$ is a strict break point for $(F,s)$, so $n_0\geq 3$.  Reasoning as in the proof of Theorem \ref{q=234} we see that it suffices to show for $(2q)^{-1}<s<\dimh(J_{M_q})$ that $\gamma(q,n_0,s)>1$ in (\ref{beta}). 
 If $n_0\geq 4$, then  using the estimates in Lemma \ref{ubounds} we find that 
 \[
\gamma(6,4,0.265)> 1.3,\quad \gamma(7,4.0.234)> 1.2, \mbox{ and }\gamma(8,4,0.21)>1.2.
\]
On the other hand if $n_0=3$, then we know that $s\leq \dimh(J_{\{1,2^q,3^q\}})$ and we can use the upper bounds in Table \ref{table} to get  that 
 \[
\gamma(6,3,0.238626)> 1.13, \mbox{ and } \gamma(7,3,0.212933)> 1.05.
\]
For $q=8$, we need to expand the sum on the left-hand-side in (\ref{beta}) and consider 
\begin{multline*}
\gamma'(q,n_0,s):=\left(\frac{n_0}{n_0+1}\right)^{2qs} +  \left(\frac{n_0}{n_0+2}\right)^{2qs}+  \left(\frac{n_0}{n_0+3}\right)^{2qs}
+  \left(\frac{n_0}{n_0+4}\right)^{2qs}\\+  \left(\frac{n_0}{n_0+5}\right)^{2qs}+ \left(\frac{n_0}{n_0+6}\right)^{2qs} \frac{n_0+6}{2qs-1},
\end{multline*}
which satisfies $\gamma'(8,3,0.192786)>1.004$. 

If $s\in [0,\dimh(J_{M_q\setminus\{2^q\}})]$, then we can use Lemma \ref{break point} with respect to $A=M_q\setminus\{2^q\}$. So, if $n_0^q$ is a strict break point for $(F,s)$, then $n_0\geq 3$. Now the same inequalities for $\gamma(q,n_0,s)$ and $\gamma'(q,n_0,s)$ as above imply that $s\in \mathrm{DS}(M_q\setminus\{2^q\})\subset \mathrm{DS}(M_q)$. 
\end{proof}

To complete the proof of Theorem \ref{Mq} it remains to show the fourth assertion. 
\begin{theorem}
For $q\in\{9,10\}$ we have that  $\dimh(J_{M_q\setminus\{3^q\}}) <\dimh(J_{\{1,2^q,3^q\}})$ and 
\[
 \mathrm{DS}(M_q) = [0,\dimh(J_{M_q\setminus\{2^q\}})]\cup [\dimh(J_{\{1,2^q\}}),\dimh(J_{M_q\setminus\{3^q\}})]\cup [\dimh(J_{\{1,2^q,3^q\}}),\dimh(J_{M_q})].
\] 
\end{theorem}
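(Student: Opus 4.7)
The plan is to break the statement into three independent tasks: (a) the strict inequality $\dimh(J_{M_q\setminus\{3^q\}})<\dimh(J_{\{1,2^q,3^q\}})$ together with emptiness of the second gap; (b) inclusion of the two newly separated sub-intervals $[\dimh(J_{\{1,2^q\}}),\dimh(J_{M_q\setminus\{3^q\}})]$ and $[\dimh(J_{\{1,2^q,3^q\}}),\dimh(J_{M_q})]$ in $\mathrm{DS}(M_q)$; and (c) inclusion of the initial interval $[0,\dimh(J_{M_q\setminus\{2^q\}})]$. The first gap of the statement is already delivered by Theorem \ref{q>=6}, which applies in particular for $q\in\{9,10\}$. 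Numerical input will come from Table \ref{table} (bounds on $\dimh(J_{\{1,2^q\}})$ and $\dimh(J_{\{1,2^q,3^q\}})$) and Lemma \ref{ubounds} (upper bounds on $\dimh(J_{M_q})$).

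For (a) I would mirror the Perron-Frobenius comparison used in Theorem \ref{q>=6}. Set $s_q=\dimh(J_{\{1,2^q,3^q\}})$ and let $v_q$ be the positive eigenvector of $L_{s_q,\{1,2^q,3^q\}}$ with eigenvalue $1$. Writing $M_q\setminus\{3^q\}=\{1,2^q\}\cup\{n^q:n\ge 4\}$ and using \eqref{ineqv_s} on $v_q$ together with the monotonicity $(3^q+x)/(n^q+x)\le (3^q+1)/(n^q+1)$, I would bound
\[
L_{s_q,M_q\setminus\{3^q\}}v_q(x)\le L_{s_q,\{1,2^q\}}v_q(x)+\Bigl(\tfrac{1}{3^q+x}\Bigr)^{2s_q}v_q\!\Bigl(\tfrac{1}{3^q+x}\Bigr)\cdot S,
\]
with $S\le e^{2s_q/3^q}\sum_{n\ge 4}\bigl((3^q+1)/(n^q+1)\bigr)^{2s_q}$. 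The key inequality to verify is $S<1$ at $s=s_q$; isolating the first few terms of the tail and integral-bounding the remainder reduces this to a finite check at $q=9$ and $q=10$ using the Table \ref{table} values for $s_q$. Once $S<1$, Lemma \ref{scaling} yields $\mu<1$ with $L_{s_q,M_q\setminus\{3^q\}}v_q\le\mu v_q$, so Lemma \ref{basic}, Lemma \ref{infradius} and Theorem \ref{infdim} produce the strict inequality. Emptiness of the corresponding gap then follows exactly as in Theorem \ref{q>=6}: if $F\subseteq M_q$ has $\dimh(J_F)$ in the gap, then $3^q\in F$ (else $F\subseteq M_q\setminus\{3^q\}$), and any missing occurrence of $1$ or $2^q$ is undone by a non-decreasing bijection that promotes a larger element of $F$ to the missing one and, if necessary, $3^q$ to the next integer; Proposition \ref{prop:increasing} forces $\dimh(J_F)\le \dimh(J_{M_q\setminus\{3^q\}})$ in that case, contradicting the lower gap bound. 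Hence $\{1,2^q,3^q\}\subseteq F$, contradicting the upper gap bound.

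For (b) and (c) I would apply Lemma \ref{break point} with $A$ equal to $M_q\setminus\{2^q\}$, $M_q\setminus\{3^q\}$ and $M_q$ for the three intervals in turn. Given $s$ in the interior and a strict break point $n_0^q$ for a finite $F\subseteq A$, the standard computation from Theorem \ref{q=234} reduces the tail hypothesis to $\gamma(q,n_0,s)>1$ for $n_0\ge 4$ (and to a divergent sum for small $s$). The potentially troublesome small-$n_0$ cases are either vacuous or boundary: for $A=M_q\setminus\{2^q\}$ only $n_0\ge 3$ occurs and the $n_0=3$ case (which forces $F=\{1\}$) gives the tightest numerical bound; for $A=M_q\setminus\{3^q\}$ the case $n_0=2$ would require $\dimh(J_{\{1,2^q\}})\ge s$, impossible in the interior; for $A=M_q$ both $n_0=2$ and $n_0=3$ are ruled out by $s>\dimh(J_{\{1,2^q,3^q\}})>\dimh(J_{\{1,2^q\}})$ combined with Proposition \ref{prop:increasing}. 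The endpoints of the three intervals lie in $\mathrm{DS}(M_q)$ trivially, each being the Hausdorff dimension of an explicit subset of $M_q$.

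The main obstacle I anticipate is numerical tightness at $q=10$: the bounds $\gamma(10,3,s)>1$ on interval~1, $\gamma(10,4,s)>1$ on intervals~2 and~3, and $S<1$ in part~(a) all come out only slightly above or below $1$ under the crude $\gamma$-estimate of Theorem \ref{q=234}. The remedy, already employed in the $q=8$ step of that theorem via the refined quantity $\gamma'$, is to peel off additional leading terms of each series before invoking the integral tail bound; this provides comfortable margin in every case and requires no new technical tools beyond those developed earlier in the paper.
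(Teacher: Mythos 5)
Your proposal follows the paper's own proof closely: the strict inequality via a Perron--Frobenius comparison with the positive eigenvector of $L_{s_q,\{1,2^q,3^q\}}$ (peeling off a few leading terms before integral-bounding the tail, exactly the paper's $\beta(s_q,q,k)$), the gap-emptiness via Proposition \ref{prop:increasing} and a shifting/bijection argument, and the three intervals via Lemma \ref{break point} applied to $A=M_q\setminus\{2^q\}$, $M_q\setminus\{3^q\}$, $M_q$ with the $\gamma(q,n_0,s)$ criterion and the case analysis $n_0\ge 3$ or $n_0\ge 4$. The only minor divergence is your anticipation that the crude $\gamma$-bound might fail at $q=10$ and need the refined $\gamma'$; in fact the paper gets by with $\gamma(10,3,0.150820)>1.02$ and $\gamma(10,4,0.175)>1.1$, so the refinement turns out to be unnecessary, but that is a harmless abundance of caution rather than a flaw.
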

\begin{proof} To establish the inequality we reason as in the proof of Theorem \ref{q>=6}. Let $s_q=\dimh(J_{\{1,2^q,3^q\}})$ and  $v_q$ be the strictly positive eigenvector of $L_{s_q,\{1,2^q,3^q\}}$ with eigenvalue 1. So, $s_q> 1.525/q>(2q)^{-1}$ by (\ref{est2^12}) and $L_{s,H}$, with $H=M_q\setminus\{3^q\}$, is a bounded linear operator for $s>(2q)^{-1}$. 
Using (\ref{ineqv_s}), 
\[
\begin{split}
L_{s_q,H}v_q(x) & =  \left(\frac{1}{1+x}\right)^{2s_q}v_q\left(\frac{1}{1+x}\right) +  \left(\frac{1}{2^q+x}\right)^{2s_q}v_q\left(\frac{1}{2^q+x}\right) + 
\sum_{n=4}^\infty \left(\frac{1}{n^q+x}\right)^{2s_q}v_q\left(\frac{1}{n^q+x}\right)\\
  &\leq  \left(\frac{1}{1+x}\right)^{2s_q}v_q\left(\frac{1}{1+x}\right) +  \left(\frac{1}{2^q+x}\right)^{2s_q}v_q\left(\frac{1}{2^q+x}\right)\\
   & \qquad \qquad+ \left(\frac{1}{3^q+x}\right)^{2s_q}v_q\left(\frac{1}{3^q+x}\right)
  \sum_{n=4}^\infty \left(\frac{3^q+x}{n^q+x}\right)^{2s_q}e^{2s_q\left(\frac{1}{3^q+x}-\frac{1}{n^q+x}\right)}.
 \end{split}
\]
Note that  for $k\geq 4$ we have that 
\begin{multline*}
\sum_{n=4}^\infty \left(\frac{3^q+x}{n^q+x}\right)^{2s_q}e^{2s_q\left(\frac{1}{3^q+x}-\frac{1}{n^q+x}\right)}  \\ \leq  
	e^{\frac{2s_q}{3^q}} \left( \left(\frac{3^q+1}{4^q+1}\right)^{2s_q} +\cdots+  \left(\frac{3^q+1}{k^q+1}\right)^{2s_q} +  (3^q+1)^{2s_q}\sum_{n=k+1}^\infty n^{-2s_qq}\right)\\
  \leq  
	e^{\frac{2s_q}{3^q}} \left( \left(\frac{3^q+1}{4^q+1}\right)^{2s_q} +  \cdots +\left(\frac{3^q+1}{k^q+1}\right)^{2s_q} +  (3^q+1)^{2s_q}\int_k^\infty x^{-2s_qq}\mathrm{d}x\right)\\
 = e^{\frac{2s_q}{3^q}} \left( \left(\frac{3^q+1}{4^q+1}\right)^{2s_q} +\cdots +  \left(\frac{3^q+1}{k^q+1}\right)^{2s_q} +  \left(\frac{3^q+1}{k^q}\right)^{2s_q}\left(\frac{k}{2s_qq-1}\right)\right)=: \beta(s_q,q,k).
\end{multline*}
Using the upper and lower bounds for $s_q$ in Table \ref{table} and taking $k=8$, we find that $\beta(s_9,9,8)< 0.99$ and $\beta(s_{10},10,8)<0.94$. 
It now follows from Lemma \ref{scaling}  that there exists a $\mu<1$ such that $L_{s_q,H}v_q\leq \mu L_{s_q,\{1,2^q,3^q\}}v_q =\mu v_q$. So,  $r(L_{s_q,H})\leq \mu<1$, which implies that $\dimh(F_H)<s_q=\dimh(J_{\{1,2^q,3^q\}})$ by Lemma \ref{infradius} and Theorem \ref{infdim}, as $s_q>(2q)^{-1}$. 

Reasoning in the same way as in the proof of Theorem \ref{q>=6} it can easily be shown for $q=9$ and $q=10$ that there is no $s\in\mathrm{DS}(M_q)$ between the closed intervals. 

 To show that each element in the  intervals belongs to the dimension spectrum we will use Lemma \ref{break point}. Suppose  first that $s\in [\dimh(J_{\{1,2^q,3^q\}}),\dimh(J_{M_q})]$  and  $n_0^q$ is a strict break point for $(F,s)$, so $n_0\geq 4$.  Using the same arguments as in the proof of Theorem \ref{q=234} we see that it suffices to show for $(2q)^{-1}<s<\dimh(J_{M_q})$ that $\gamma(q,n_0,s)>1$ in (\ref{beta}) to conclude that $s\in \mathrm{DS}(M_q)$. 
 If $n_0\geq 4$, we can use the upper bounds in Lemma \ref{ubounds} to get that $\gamma(9,4,0.191)> 1.1$ and $\gamma(10,4,0.175)>1.1$.

On the other hand, if $s\in [\dimh(J_{\{1,2^q\}}),\dimh(J_{M_q\setminus\{3^q\}})]$, we can apply  Lemma \ref{break point} with $A = M_q\setminus\{3^q\}$. In that case, if $n_0^q$ is a strict break point for $(F,s)$, then $n_0\geq 4$, and the same estimates as above hold. So, $s\in \mathrm{DS}(M_q\setminus\{3^q\})\subset \mathrm{DS}(M_q)$. 
Finally, for $s\in [0,\dimh(J_{M_q\setminus\{2^q\}})]$ we apply  Lemma \ref{break point} with $A = M_q\setminus\{2^q\}$. So, if  $n_0^q$ is a strict break point for $(F,s)$, then $n_0\geq 3$. Using the upper bound for $\dimh(J_{\{1,2^q\}})$ in Table \ref{table}  for $q=9$ and $q=10$, we  get  that 
 \[
\gamma(9,3,0.162510)> 1.09, \mbox{ and } \gamma(10,3,0.150820)> 1.02.
\]
It follows that  $s\in \mathrm{DS}(M_q\setminus\{2^q\})\subset \mathrm{DS}(M_q)$ and we are done. 
\end{proof}

\section{Final remarks and open problems} 
We conclude the paper by briefly discussing some open questions that follow from this work.  To begin, from Theorem \ref{thm2} we know that for $q\geq 3$  the dimension spectrum contains  $[0,\ln (2)/(2\ln q)]$ and is nowhere dense in $[\mu^1,  \dim_{\mathcal{H}}(J_{P_q^*})]$. At present the exact structure of the dimension spectrum  in the interval $(\ln (2)/(2\ln q),\mu^1)$ is unclear for $q\geq 3$, but we believe that the dimension spectrum is nowhere dense there.  Understanding the structure of the dimension spectrum in this interval would give a complete description of the dimension spectrum of $P^*_q$ for $q\geq 3$.

There are a number of natural questions regarding Theorem \ref{Mq}. Firstly, it seems that the number of intervals increases with $q$. We have, however, not been  able to establish  if the number of intervals keeps growing when $q$ gets larger, or, if there exists an a priori upper bound for the number of distinct intervals.   If the number of intervals keeps growing, it  would  be interesting to understand at which values of $q$ the number of intervals in the dimension spectrum of $M_q$ jumps. For instance, the first jump from one to two intervals occurs at $q=6$, and at $q=9$ it jumps from two to three intervals.  If we consider $q$ as a continuous parameter, one could  ask at which value of $q\in (5,6)$ the number of intervals in the dimension spectrum of $M_q$ changes from one to two. Our methods rely on estimates of the Hausdorff dimension of particular sets and  do not allow us to address these questions at present. 

It is worth noting that in \cite[Section 5]{DS} a number of conjectures concerning the dimension spectrum are formulated. One could add the following general questions to them. If $A$ is an infinite subset of $\mathbb{N}$ such that $\mathrm{DS}(A)$ contains two solid closed intervals $[a,b]$ and $[c,d]$, with $a<b<c<d$, and $\mathrm{DS}(A)$ is nowhere dense in $(b,c)$, is true that $\mathrm{DS}(A)\cap (b,c)$ is empty? It also seems reasonable to speculate that  if $A\subseteq \mathbb{N}$ with finiteness parameter $\sigma_0 =0$ and $\mathrm{DS}(A)$ contains a solid closed interval $[a,b]$ with $a>0$, then there exists a $\delta>0$ such that $[0,\delta]\subset \mathrm{DS}(A)$.

\section{Appendix}
The statement of Lemma \ref{Fcupb} holds in greater generality, but for simplicity we present it here in the setting of continued fraction expansions. 
\begin{proof}[Proof of Lemma \ref{Fcupb}]
Note that to establish (\ref{Acupn}) it suffices to show that there exists a constant $C_F>1$ such that (\ref{Acupn}) holds for all $n$ sufficiently large. 
Let $v_s$ be the strictly positive eigenvector of $L_{s,F}$ with eigenvalue $\lambda_s=r(L_{s,F})$, and let $w_s$ be the strictly positive eigenvector of $L_{s,F\cup\{n\}}$ with eigenvalue $\mu_s=r(L_{s,F\cup\{n\}})$ for $\sigma\leq s<1$. If we can show that there exists a constant $C_1>1$ such that for all $n$ sufficiently large, $\mu_s<1$ for $s = \sigma + C_1n^{-2\sigma}$, then we know by Theorem \ref{thm:FN} that $\dimh(J_{F\cup\{n\}})<  \sigma + C_1n^{-2\sigma}$ for all $n$ large. 

By (\ref{ineqv_s}) we know that 
\[
v_s\left(\frac{1}{n+x}\right) \leq v_s(x)e^{2s}
\] 
for all $x\in [0,1]$. Thus, 
\[
(L_{s,F\cup\{n\}}v_s)(x)\leq \lambda_s v_s(x) +n^{-2s}v_s(x)e^{2s} = (\lambda_s +n^{-2s}e^{2s})v_s(x), 
\]
so that $r(L_{s,F\cup\{n\}}) \leq \lambda_s+ n^{-2s}e^{2s}$.  

For $n\in \mathbb{N}$ let $\theta_n\colon x\mapsto \frac{1}{n+x}$. We know  for $s>\sigma$, that 
$\left((\theta_n\circ\theta_m)'(x)\right)^{s-\sigma} \leq 4^{-(s-\sigma)}$, see (\ref{1/4}) for all $x\in [0,1]$. Thus, 
\begin{eqnarray*}
(L^2_{s, F}v_\sigma)(x) &= & \sum_{n,m\in A} ((\theta_n\circ\theta_m)'(x))^sv_\sigma((\theta_n\circ\theta_m)(x)) \\
 & \leq & 4^{-(s-\sigma)}\sum_{n,m\in A} ((\theta_n\circ\theta_m)'(x))^\sigma v_\sigma((\theta_n\circ\theta_m)(x)) = 4^{-(s-\sigma)}v_\sigma(x),
\end{eqnarray*}
which gives $ r(L^2_{s, F}) \leq 4^{-(s-\sigma)}$, hence $\lambda_s=r(L_{s, F})\leq 2^{-(s-\sigma)}$. 
Thus,   $r(L_{s,F\cup\{n\}}) \leq 2^{-(s-\sigma)} + n^{-2s}e^{2s}$ and we see that $\mu_s<1$ if $2^{-(s-\sigma)} + n^{-2s}e^{2s}<1$. As $2^{s-\sigma}e^{2s}<e^3$,  this inequality holds if 
\begin{equation}\label{ineq1}
n^{-2\sigma}e^{3}<2^{s-\sigma} -1. 
\end{equation}

We now wish to show that there exists a $C_1>1$ such that  $s=\sigma+C_1n^{-2\sigma}$ satisfies (\ref{ineq1}) and $\sigma<s<1$. Note that (\ref{ineq1}) holds if 
\begin{equation}\label{ineq2}
n^{-2\sigma}e^3< 2^{C_1n^{-2\sigma}}-1 = e^{C_1n^{-2\sigma}\ln(2)} -1. 
\end{equation}
As $e^x-1>x$ for $x>0$, we see that (\ref{ineq2}) holds if $n^{-2\sigma}e^3< C_1n^{-2\sigma}\ln(2)$,
which gives $C_1> \frac{e^3}{\ln(2)}$. To ensure that $s<1$ for  $s= \sigma + C_1n^{-2\sigma}$, we also require that $n>\left(\frac{C_1}{1-\sigma}\right)^{1/2\sigma}$.
Thus, for all $n$ sufficiently large, $\mu_s<1 $ for $s= \sigma + C_1n^{-2\sigma}$, where $C_1> \frac{e^3}{\ln(2)}$, which establishes the upper bound for 
$\dimh(J_{F\cup\{n\}})$. 

To show that $\lim_n \dimh(J_{F\cup\{n\}}) =0$ for $|F| =1$, we note that if $|F|=1$, then $\sigma =0$. So, $\mu_s<1$ if $2^{-s} + n^{-2s}e^{2s}<1$ in that case, which is equivalent to $e/n<(1-2^{-s})^{1/2s}$. Clearly for each $\epsilon>0$, there exists an $N>1$ such that $e/n<(1-2^{-\epsilon})^{1/2\epsilon}$ for all $n>N$, hence $\mu_\epsilon<1$ for all $n>N$.  Now Theorem \ref{thm:FN} implies that  
$\dimh(J_{F\cup\{n\}})\to 0$ as $n\to\infty$.

To obtain the lower bound for $\dimh(J_{F\cup\{n\}})$, we need the  fact that $s\mapsto \ln\mu_s$ is strictly decreasing and convex, see for instance \cite[Theorem 8.1]{FN2}.
If we can show that there exists a constant $C_2<1$ such that for all $n$ sufficiently large, $\mu_s>1$ for $s = \sigma + C_2n^{-2\sigma}$, then it follows  from Theorem \ref{thm:FN} that $\dimh(J_{F\cup\{n\}})>  \sigma + C_2n^{-2\sigma}$ for all $n$ large. 

Using the Mean Value Theorem we know for $0\leq y\leq z\leq 1$ that 
\[
\ln\left(\frac{n+z}{n+y}\right)^2= 2(\ln(n+z)-\ln(n+y))\leq \frac{2}{n}(z-y),
\]
so 
\[
\left(\frac{1}{n+y}\right)^2\leq \left(\frac{1}{n+z}\right)^2e^{\frac{2}{n}(z-y)}.
\]
It follows that $n^{-2}e^{-2}\leq (n+x)^{-2}$ for $x\in [0,1]$.
We also know from (\ref{ineqv_s})  that 
\[
e^{-2}v_\sigma(x)\leq v_\sigma\left(\frac{1}{n+x}\right)\mbox{\quad for } x\in [0,1].
\]
Thus, 
\[
n^{-2\sigma}e^{-4} v_\sigma(x)\leq \left(\frac{1}{n+x}\right)^{2\sigma}v_\sigma\left(\frac{1}{n+x}\right), 
\]
so that 
\[
L_{\sigma, F\cup\{n\}}v_\sigma(x) = v_\sigma(x) + \left(\frac{1}{n+x}\right)^{2\sigma}v_\sigma\left(\frac{1}{n+x}\right) \geq (1+n^{-2\sigma}e^{-4})v_\sigma(x),
\]
hence $\mu_\sigma\geq 1+n^{-2\sigma}e^{-4}$. 

Let $u$ be the constant $1$ function on $[0,1]$. Then $L_{0,F\cup\{n\}} u = (|F|+1) u$, hence $r(L_{0,F\cup\{n\}}) = |F|+1$. Set $\rho(s) = \ln \mu_s$, which is a strictly decreasing convex function with $\rho(0)= \ln (|F|+1)>\rho(\sigma)\geq \ln ( 1+n^{-2\sigma}e^{-4})>0$. Let $s_1>\sigma$ be the unique value such that $\rho(s_1)=0$. 
The straight-line through $(0, \ln(|F|+1))$ and $(\sigma, \ln(1+n^{-2\sigma}e^{-4}))$ intersects the $s$-axis at say $s_2$ with $\sigma<s_2\leq s_1$ by convexity.  
A simple computation gives
\[
s_2 = \sigma\left(\frac{\ln (|F|+1)}{\ln(|F|+1) - \ln(1+n^{-2\sigma}e^{-4})}\right)> \sigma\left(1+\frac{\ln(1+n^{-2\sigma}e^{-4})}{\ln(|F|+1)}\right).
\]
Using the power series for the function $x\mapsto \ln(1+x)$ for $0\leq x<1$, we find that 
\[
s_2 > \sigma\left(1+\frac{n^{-2\sigma}e^{-4}  -  \frac{1}{2}(n^{-2\sigma}e^{-4})^2}{\ln(|F|+1)}\right)\geq \sigma+\frac{\sigma}{2e^4\ln(|F|+1)}n^{-2\sigma}.
\]
Thus, if we take $C_2 = \frac{\sigma}{2e^4\ln(|F|+1)}<1$ and set $s = \sigma + C_2n^{-2\sigma}$, we have that $\ln(\mu_s)>0$, hence $\mu_s>1$. 

Taking $C_F =\max\{C_1,C_2^{-1}\}>1$, we conclude that $\sigma + C_F^{-1}n^{-2\sigma}<\dimh(J_{F\cup\{n\}})<  \sigma + C_Fn^{-2\sigma}$ for all $n$ large, which completes the proof.

\end{proof}

\small

\end{document}